\numberwithin{equation}{section}
\newtheorem{theorem}{Theorem}[section]
\newtheorem{definition}[theorem]{Definition}
\newtheorem{lemma}[theorem]{Lemma}%[section]
\newtheorem{proposition}[theorem]{Proposition}%[section]
\newtheorem{remark}[theorem]{Remark}%[section]
\newtheorem{corollary}[theorem]{Corollary}%[section]
\numberwithin{equation}{section}
\def\lmd{\lambda}
\def\al{\alpha}
\def\ep{\epsilon}
\def\be{\beta}
\def\dt{\delta}
\def\h{\mathfrak{h}}
\def\g{\mathfrak{g}}
\def\vf{\varphi}
\def\vvf{v_{\vf}}
\def\jc{J^C}
\def\gjc{{\g_{\jc}}}
\def\gic{{\g_{I^C}}}
\def\hi{\h_I}
\def\gi{\g_I}
\def\gizero{\gi^{(0)}}
\def\gim{\gi^{(m)}}
\def\mgivf{M_{\gi,\vf}}
\def\({\left(}
\def\){\right)}
\def\p{\partial}
\def\U{\mathcal{U}}
\def\V{\mathcal{V}}
\newcommand{\C}{\mathbb C}
\newcommand{\R}{\mathbb{R}}
\newcommand{\N}{\mathbb{N}}
\newcommand{\Z}{\mathbb{Z}}
\newcommand{\CN}{\mathcal{N}}
\newcommand{\spanc}[1]{\mathrm{Span}_{\C}\left\{#1\right\}}
\newcommand{\un}[1]{\underline{#1}}
\newcommand{\ov}[1]{\overline{#1}}
\def\Ind{\mathrm{Ind}}
\def\supp{\mathrm{Supp}}
\newcommand{\elei}[2]{I_{#1}^{(#2)}}
\newcommand{\vfi}[2]{\vf\left(I_{#1}^{(#2)}\right)}
\newcommand{\vfl}[1]{\vf(L_{#1})}
\begin{document}
\title[Non-weight module over gap-$p$ Virasoro algebras]{Non-weight modules over gap-$p$ Virasoro algebras}
  \author{Chengkang Xu}
  \address{C. Xu: School of Mathematical Sciences, Shangrao Normal University, Shangrao, Jiangxi, P. R. China}
  \email{xiaoxiongxu@126.com}
  \author{Fulin Chen}
  \address{F. Chen: School of Mathematical Sciences, Xiamen University, Xiamen, Fujian, P. R. China}
  \email{chenf@xmu.edu.cn}
    \author{Shaobin Tan}
  \address{S. Tan: School of Mathematical Sciences, Xiamen University, Xiamen, Fujian, P. R. China}
  \email{tans@xmu.edu.cn}

\date{}
 \keywords{Virasoro algebra, gap-$p$ Virasoro algebra, restricted module, Whittaker module,
           $\U(\C L_0)$-free module}
  \subjclass[2020]{17B10, 17B65, 17B66, 17B68, 17B69}
\maketitle

\begin{abstract}
In this paper, we study non-weight modules over gap-$p$ Virasoro algebras,
including Whittaker modules, $\mathcal{U}(\mathbb{C} L_0)$-free modules and their tensor products.
We establish necessary and sufficient conditions for universal Whittaker modules to be irreducible and study the structure of irreducible Whittaker modules.
The $\mathcal{U}(\mathbb{C} L_0)$-free modules of rank 1 are classified and the irreducibility of such modules are determined.
Moreover, the irreducibility of tensor products of $\mathcal{U}(\mathbb{C} L_0)$-free modules of rank 1 and irreducible restricted modules is also determined.
\end{abstract}

\section{Introduction}

In the past decades, representation theory of those Lie algebras that are closely related to the Virasoro algebra has been extensively studied.
Particularly, the theory of weight modules has been well-developed,
such as the classification of irreducible Harish-Chandra modules over the twisted Heisenberg-Virasoro algebra \cite{LZ1},
the Schr{\"o}dinger-Virasoro algebras \cite{LS}, the mirror Heisenberg-Virasoro algebra \cite{LPXZ},
certain deformed Heisenberg-Virasoro algebras \cite{Liu}, the gap-$p$ Virasoro algebras \cite{Xu},
certain Lie algebras of block-type \cite{GGS,SXX,WT} and etc.
There are also some interesting studies on other kinds of weight modules, see \cite{BBFK,CFM} for examples.

In recent years, the theory of
non-weight modules over infinite dimensional Lie algebras has also attracted more and more attention.
Among them, Whittaker modules (or more generally, non-weight restricted modules) and $\U(\h)$-free modules are most well-known.  For a Lie algebra with a triangular decomposition,  its Whittaker module is a module on which the elements in the positive part (or its certain subalgebra) act as scalars.
This class of modules was first constructed for $\mathfrak{sl}_2$ in \cite{AP},
and then systematically studied for all semisimple Lie algebras in \cite{Kon}.
 Whittaker modules for infinite dimensional Lie algebras have been  intensively studied as well,
including the Virasoro algebra \cite{OW,LGZ}, the generalized Weyl algebras \cite{BO},
the affine Lie algebras \cite{Chr,ALZ}, the twisted Heisenberg-Virasoro algebra \cite{LZ3},
the mirror Heisenberg-Virasoro algebra \cite{GMZ},
the Schr{\"o}dinger-Witt algebra \cite{ZTL}, and the rank two Heisenberg-Virasoro algebra \cite{TWX}.

Restricted module is a generalization of highest weight module and Whittaker module,
which is closely connected to (weak) modules for certain vertex algebras.
To classify irreducible restricted modules is a core topic in the representation theory of Lie algebras and related vertex algebras.
The problem was studied for several Lie (super)algebras, such as the Virasoro algebra \cite{MZ},
Neveu-Schwarz algebras \cite{LPX}.
the twisted and mirror Heisenberg-Virasoro algebras \cite{CG1,Gao,LPXZ,TYZ},
gap-$p$ Virasoro algebras of level zero \cite{GX}, and so on.

For a Lie algebra  $\mathcal L$ equipped with a Cartan subalgebra $\h$, its
$\U(\h)$-free module is defined to be a module free of a fixed rank when viewed as a $\U(\h)$-module.
Such modules were first introduced in \cite{Nil1} for the simple Lie algebra $\mathfrak{sl}_{n+1}$.
Since then, $\U(\h)$-free modules have been widely studied for various Lie algebras,
including finite dimensional simple Lie algebras \cite{Nil2}, Kac-Moody algebras \cite{CTZ},
the Virasoro algebra \cite{LZ2}, the Witt algebras $W_n$ and $W_n^+$ \cite{TZ},
the twisted Heisenberg-Virasoro algebra and the $W$-algebra $W(2,2)$ \cite{CG2},
the mirror Heisenberg-Virasoro algebra\cite{GMZ}, and so on.

The gap-$p$ Virasoro algebra $\g$ was first introduced in \cite{Xu}, where irreducible Harish-Chandra modules were classified.
It is closely related to the algebra of derivations over a rational quantum torus as well as the twisted Heisenberg-Virasoro algebra.
Particularly, when $p=2$, $\g$ is just the mirror Heisenberg-Virasoro algebra.
In this paper, we study non-weight modules over the gap-$p$ Virasoro algebra $\g$,
including Whittaker modules, $\U(\C L_0)$-free modules and their tensor products.
The main goals of this paper are to determine the irreducibility of universal Whittaker $\g$-modules;
classify the $\U(\C L_0)$-free $\g$-modules of rank $1$;
and determine the irreducibility of the tensor products of $\U(\C L_0)$-free $\g$-modules and
irreducible restricted $\g$-modules.

The structure of this paper is as follows.
In Section 2, we recall some notions and results about Whittaker modules, $\U(\C L_0)$-free modules, as well as
restricted modules.
In Section 3, we first give a free field construction of certain Whittaker $\g$-modules. Using this, we construct certain tensor product $\g$-modules.
Then we characterize all irreducible Whittaker $\g$-modules by determining the irreducibility of universal Whittaker modules of level zero over a subalgebra $\gi$ of $\g$.
In Section 4, we classify all $\g$-modules that are free of rank 1 over $\U(\C L_0)$.
In Section 5, we study the tensor products of the $\U(\C L_0)$-free modules and irreducible restricted modules over $\g$.
Finally, in the appendix, we give a vertex algebra interpretation of the free field construction of Whittaker $\g$-modules, which leads to a realization of $\g$ as a twisted vertex Lie algebra.

Throughout this paper, we denote by $\Z$, $\N$, $\Z_+$, $\mathbb R,\,\C$, and $\C^\times$ the set of integers, non-negative integers, positive integers, real numbers, complex numbers, and nonzero complex numbers,  respectively.
All vector spaces and algebras are  over $\C$.
For a Lie algebra $\mathcal L$, we denote by $\U(\mathcal L)$ the universal enveloping algebra of $\mathcal L$.
For a subalgebra $\mathcal K$  of $\mathcal L$ and  a $\mathcal K$-module $V$,
we  denote by  \[\text{Ind}_{\mathcal K}^{\mathcal L}V=\U(\mathcal L)\otimes_{\U(\mathcal K)}V\] the $\mathcal L$-module induced from $V$.
For any $n\in\R$, we denote by $[n]$ the largest integer that is less than or equal to $n$.
Finally, we fix a positive integer $p>1$ for this paper.

\section{Preliminaries}
 In this section, we introduce some notations related to the gap-$p$ Virasoro algebra, and recall some results on the  Whittaker modules and $\U(\C L_0)$-free modules over the Virasoro algebra.

\begin{definition}
The \textbf{gap-$p$ Virasoro algebra} $\g$ is a complex Lie algebra with a basis
$$\left\{L_n,\elei ni, C_j\ \mid\ \, n\in\Z, 1\leq i\leq p-1, 0\leq j\leq [\frac p2] \right\}$$
satisfying the following Lie brackets
\begin{align}
  &[L_m,L_n]=(m-n)L_{m+n}+\frac1{12}(m^3-m)C_0\dt_{m+n,0},              \label{eq2.1}\\
  &[\elei mi,\elei nj]=\left(m+\frac ip\right)\dt_{i+j,p}\dt_{m+n+1,0} C_i,\ \ \ \ \
                                               [C_k, \g]=0,             \label{eq2.2}\\
  &[L_m,\elei ni]=-\left(n+\frac ip\right)\elei{m+n}i,                   \label{eq2.3}
\end{align}
where $m,n\in\Z$, $1\leq i,j\leq p-1$, and $0\leq k\leq [\frac p2]$.
\end{definition}

Here and below, for $[\frac p2]<i\leq p-1$, we also set $C_i=C_{p-i}$ for narrative convenience.

Note that the Lie algebra $\g$ contains a subalgebra
\[
\V=\mathrm{Span}_\C\{L_n,C_0 \mid n\in\Z\}
\]
 which is isomorphic to the Virasoro algebra, and contains an ideal
\[
\h=\mathrm{Span}_\C\{\elei ni,C_i \mid  n\in\Z, 1\leq i\leq p-1\}
\]
which is isomorphic to a direct sum of Heisenberg algebras.
Obviously $\g$ is the semi-direct product of $\V$ and $\h$.

Let $I$ be a subset  of $\{1,2,\dots,p-1\}$.
We denote by $|I|$ the cardinal number of $I$, and set
\[I^C=\{1,2,\dots,p-1\}\setminus I,\qquad
I_0=I\cap \left\{1,2,\dots,[\frac{p}{2}]\right\}.
\]
Associated to $I$, we have the following two subspaces of $\g$:
\begin{eqnarray}
 \g_I&=&\spanc{L_n,\elei ni, C_0, C_i\ \, \vline\ \, n\in\Z,  i\in I}, \label{defg_I}\\
 \h_I&=&\spanc{\elei ni, C_i\ \mid\  n\in\Z,  i\in I}.\label{defh_I}
\end{eqnarray}
Clearly, $\g_I$ is a subalgebra of $\g$, $\h_I$ is an ideal of $\g$, and
\[
\g_I=\V\oplus \h_I.
\]
Note that $\g_I=\V$, $\h_I=0$ when $I=\emptyset$, and $\g_I=\g$, $\h_I=\h$ when $I=\{1,2,\dots,p-1\}$.

In this paper we mainly concern about the case that $I$ is {\bf symmetrical}, meaning that $p-i\in I$ whenever $i\in I$.
In this case, the complement set $I^C$ is also symmetrical, and
\begin{eqnarray}\label{eq:dechI}
 \h_I=\oplus_{i\in I_0}\h_{\{i,p-i\}}\qquad \left(\h_{\{i,p-i\}}
 =\spanc{I_n^{(i)},I_n^{(p-i)},C_i\mid n\in \Z}\right)
\end{eqnarray}
is a direct sum of Heisenberg algebras.
Additionally, we have that
\[
\g=\g_I\oplus \h_{I^C}\quad\text{and}\quad
\h=\h_I\oplus \h_{I^C}.
\]

For a $\Z$-graded Lie algebra $\mathcal{L}$, we denote by $\mathcal{L}_m$ ($m\in \Z$) the $m$-th component of it such that
$\mathcal{L}=\oplus_{m\in \Z}\mathcal{L}_m$.

\begin{definition}\label{Restricted}
A module $V$ over a $\Z$-graded Lie algebra $\mathcal L$ is called \textbf{restricted}
if for any $v\in V$, there exists $N\in\N$ such that $\mathcal{L}_m v=0$ for all $m\geq N.$
\end{definition}

The gap-$p$ Virasoro algebra $\g$ admits a $\Z$-graded structure with
\begin{eqnarray*}
  \g_0&=&\spanc{L_0, C_0, \elei 0i, C_i\ \, \vline\ \, 1\leq i\leq p-1},\ \text{and}\\
  \g_n&=&\spanc{L_n, \elei ni\ \, \vline\ \, 1\leq i\leq p-1}\quad\text{for}\  n\neq 0.
\end{eqnarray*}
Note that for every subset $I$ of $\{1,2,\dots,p-1\}$,
$\g_I$ and $\h_I$ are both $\Z$-graded subalgebras of $\g$.
In particular, $\V,\h,\gi,\hi$ are all $\Z$-graded Lie algebras.

\begin{definition}
 Let $I$ be a symmetrical subset of $\{1,2,\dots,p-1\}$,
 and let $\un l=(l_i)_{i\in I}$ be an $|I|$-tuple in $ \C^{|I|}$.
 We say that a module $V$ over $\g_I$ or $\h_I$ is of \textbf{level} $\un l$ if  for every $i\in I$,
 the central element $C_i$ acts as the scalar $l_i$ on $V$.
 Additionally,  we say that the $|I|$-tuple $\un l$ is \textbf{generic} if $l_i\neq 0$ for all $i\in I$.
\end{definition}
Since $I$ is symmetrical, when we say a level $\un l$ of a module over $\g_I$ or $\h_I$, the $|I|$-tuple  $\un l$ is always assumed to be  symmetrical in the sense  that $l_{p-i}=l_i$ for all $i\in I$.

\begin{definition}
  Let $c\in \C$. A $\g_I$-module $V$ is said to be \textbf{of central charge} $c$ if $C_0$ acts as $c$ on $V$.
\end{definition}

\begin{definition}
Let $\mathcal{L}$ be a $\Z$-graded Lie algebra and let $\mathcal{K}$ be a graded subalgebra of $\mathcal{L}$ such that $\mathcal{K}_{-m}=0$ for all $m\in \N$. A \textbf{Whittaker function} $\phi$ on $\mathcal{K}$ is defined as a Lie algebra homomorphism $\phi:\mathcal{K}\rightarrow\C$.
An $\mathcal{L}$-module is called a \textbf{Whittaker module} of type $\phi$ if it can be generated by a \textbf{Whittaker vector} of type $\phi$, namely,  a vector $v$ satisfying
 \[
   x\cdot v=\phi(x)v\qquad \text{for all}\ x\in \mathcal{K}.
 \]
\end{definition}

Every Whittaker function $\phi$ on $\mathcal{K}$ defines a one-dimensional $\mathcal{K}$-module
$\C v_\phi $ such that
\[
 x\cdot v_\phi=\phi(x) v_\phi\qquad \text{for all}\ x\in \mathcal{K}.
\]
The resulting induced $\mathcal{L}$-module
\[
M_{\mathcal{L},\phi}=\Ind_{\mathcal{K}}^{\mathcal{L}} \C v_\phi
 \quad (=\U(\mathcal{L})\otimes_{\U(\mathcal{K})} \C v_\phi)
\]
is often referred as the \textbf{universal Whittaker module} of type $\phi$,
and $v_\phi$ a \textbf{universal Whittaker vector} of type $\phi$. Note that for every Whittaker $\mathcal L$-module $V$ of type $\phi$ with Whittaker vector $v$, there is an $\mathcal L$-module epimorphism
$\pi: M_{\mathcal{L},\phi}\longrightarrow V$ such that $\pi(v_\phi)=v$.

In what follows, we recall some facts on Whittaker modules and $\U(\C L_0)$-free modules   over the Virasoro algebra.

\begin{proposition}\cite[Theorem 7]{LGZ}\label{prop2.5}
Let $m\in\Z_+$  and let $\phi$ be a Whittaker function on the subalgebra $\V_{\geq m}=\oplus_{k\geq m}\C L_k$ of $\V$. Then the universal Whittaker $\V$-module $M_{\V,\phi}$ is irreducible
 if and only if $\left(\phi(L_{2m-1}),\phi(L_{2m})\right)\neq (0,0)$.
\end{proposition}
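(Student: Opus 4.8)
The plan is to recast the irreducibility of $M_{\V,\phi}$ as a statement about its Whittaker vectors, and then to compute those vectors. Throughout I fix the central charge, letting $C_0$ act by a scalar $c$; the criterion will turn out to be independent of $c$ (this normalization is in any case needed, since if $C_0$ acted freely then $(C_0-\lambda)M_{\V,\phi}$ would already be a proper submodule). By PBW, $M_{\V,\phi}$ has basis the monomials $L_{n_1}\cdots L_{n_s}v_\phi$ with $n_1\le\cdots\le n_s\le m-1$, and I will call $s$ the \emph{length} and $-\sum_i n_i$ the \emph{degree} of such a monomial. Setting $X_k:=L_k-\phi(L_k)$ for $k\ge m$, a direct bracket computation shows that $X_k$ carries any basis monomial to a combination of monomials of strictly smaller length together with length-preserving monomials of strictly smaller degree; moreover $L_k\mapsto X_k$ is a genuine homomorphism $\V_{\ge m}\to\mathrm{End}(M_{\V,\phi})$, because $[X_j,X_k]=(j-k)X_{j+k}$ (the would-be anomaly $(j-k)\phi(L_{j+k})$ vanishes, as $\phi(L_n)=0$ for $n>2m$ while the coefficient is $0$ when $j+k=2m$).

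The first half of the argument is to show that every nonzero submodule $W$ contains a Whittaker vector. Given $0\ne w\in W$, the non-increase of length under each $X_k$, together with the fact that degree is bounded below once the length is fixed, forces the orbit $\U(\V_{\ge m})w$ to be finite dimensional. On this finite-dimensional space each $X_k$—indeed every element of the image of $\V_{\ge m}$—is nilpotent, since it strictly lowers the $(\text{length},\text{degree})$ ordering. Engel's theorem then produces a nonzero $w'\in\U(\V_{\ge m})w\subseteq W$ with $X_kw'=0$ for all $k\ge m$, i.e. a Whittaker vector. Combined with the identification $\mathrm{End}_{\V}(M_{\V,\phi})\cong\{\text{Whittaker vectors}\}$, $f\mapsto f(v_\phi)$, and the countable-dimensional Schur (Dixmier) lemma, this reduces the problem to: $M_{\V,\phi}$ is irreducible if and only if its only Whittaker vectors are the scalar multiples of $v_\phi$.

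It then remains to compute the Whittaker vectors, and the two directions split. For the easy direction, assume $(\phi(L_{2m-1}),\phi(L_{2m}))=(0,0)$. A one-line computation gives $X_k(L_{m-1}v_\phi)=(k-m+1)\phi(L_{k+m-1})v_\phi$, which vanishes for every $k\ge m$ precisely under this hypothesis (the cases $k=m$ and $k=m+1$ produce $\phi(L_{2m-1})$ and $\phi(L_{2m})$, while $k\ge m+2$ is automatic). Hence $L_{m-1}v_\phi$ is a Whittaker vector outside $\C v_\phi$, the associated endomorphism is non-scalar, and $M_{\V,\phi}$ is reducible.

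The hard direction, which I expect to be the main obstacle, is the converse: if $(\phi(L_{2m-1}),\phi(L_{2m}))\ne(0,0)$ then every Whittaker vector lies in $\C v_\phi$. I would proceed by a leading-term analysis. For a hypothetical Whittaker vector $w\notin\C v_\phi$, its top-length, top-degree symbol is killed by the length-preserving parts of the $X_k$, which act as the derivations induced by $L_n\mapsto(k-n)L_{n+k}$ (set to $0$ once $n+k\ge m$); the invariants of these derivations are spanned by Wronskian-type polynomials in the top modes, the simplest being a power of $L_{m-1}$. Imposing the full system $X_kw=0$ then generates a cascade relating consecutive length-homogeneous components, and the equations indexed by $k=m,m+1,\dots,2m$ feed back exactly the scalars $\phi(L_{2m-1})$ and $\phi(L_{2m})$; when either is nonzero the cascade becomes inconsistent—already visible for a length-one symbol, where $X_kw=0$ forces $\phi(L_{2m-1})=\phi(L_{2m})=0$—so the symbol, and hence $w$, must lie in $\C v_\phi$. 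The delicate point requiring genuine care is controlling this cascade uniformly across all lengths and all admissible top symbols, not merely for the pure powers $L_{m-1}^{s}$.
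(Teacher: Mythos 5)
The paper itself gives no argument for this proposition; it is quoted directly from \cite[Theorem 7]{LGZ}, so your attempt can only be measured against that source and against correctness. Your reduction framework is sound and is essentially the standard route: you are right that the central charge must be normalized (since $C_0\notin\V_{\geq m}$, it would otherwise act freely and kill irreducibility outright); the identity $[X_j,X_k]=(j-k)X_{j+k}$ is correct because $\phi$ kills $[\V_{\geq m},\V_{\geq m}]$; the finite-dimensionality of $\U(\V_{\geq m})w$ via the (length, degree) filtration works (one must also note that a drop in length can raise the degree only boundedly, and that there are finitely many monomials of fixed length and degree, but both are easy); and Engel plus Dixmier's lemma then correctly reduce irreducibility to the statement that the Whittaker vectors of type $\phi$ are exactly $\C v_\phi$. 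Your reducibility direction is complete: $X_k(L_{m-1}v_\phi)=(k-m+1)\phi(L_{k+m-1})v_\phi$, so $L_{m-1}v_\phi$ is a Whittaker vector outside $\C v_\phi$ precisely when $\phi(L_{2m-1})=\phi(L_{2m})=0$.

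The genuine gap is the converse, which is the actual content of the theorem: if $\left(\phi(L_{2m-1}),\phi(L_{2m})\right)\neq(0,0)$, then no Whittaker vector exists outside $\C v_\phi$. What you offer here is a plan, not a proof, and you say so yourself. Two concrete problems: first, your structural claim about the top symbol is too optimistic --- the joint invariants of the length-preserving derivations $L_n\mapsto(k-n)L_{n+k}$ (truncated at $n+k\geq m$) include \emph{every} polynomial in the variables $L_0,L_1,\dots,L_{m-1}$, since these are annihilated by all such derivations for $k\geq m$; so the invariant ring is far larger than ``Wronskian-type polynomials,'' the symbol condition by itself pins down very little, and all the real constraints come from the interaction with lower-order terms. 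Second, the ``cascade'' that is supposed to propagate the nonvanishing of $\phi(L_{2m-1})$ or $\phi(L_{2m})$ into a contradiction is never written down; you verify it only for a length-one symbol, where the linear system indexed by $k=2m,2m-1,\dots$ is triangular and forces all coefficients to vanish. Controlling this uniformly over all lengths and all admissible top symbols is exactly where the several-page induction on leading terms (with respect to two orderings) in \cite{LGZ} does its work, and nothing in the proposal substitutes for it. As it stands, the proposal establishes the reduction and the ``only if'' half, but not the ``if'' half of the equivalence.
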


For every $\lmd\in\C^\times$ and $a\in\C$, the polynomial ring $\C[t]$ carries
a $\V$-module structure defined by
$$C_0 f(t)=0,\ \ \ \ L_nf(t)=\lmd^n(t+an)f(t+n)\ \ \ \textit{ for all }\ n\in\Z,\ f(t)\in\C[t].$$
The resulting $\V$-module is denoted by $\Omega(\lmd,a)$.
The following results are proved in \cite{LZ2} and \cite[Theorem 3]{TZ}.

\begin{proposition}\label{prop2.6}
(1) The $\V$-module $\Omega(\lmd,0)$ has a unique irreducible submodule $t\Omega(\lmd,0)$.\\
\notag (2) The $\V$-module $\Omega(\lmd,a)$ is irreducible if and only if $a\neq0$.\\
\notag(3) Let $V$ be a $\V$-module which is free of rank 1 when viewed as a $\U(\C L_0)$-module.
 Then $V$ is isomorphic to $\Omega(\lmd,a)$ for some $\lmd\in\C^\times$ and $a\in\C$.
\end{proposition}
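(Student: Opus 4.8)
The plan is to prove (2) and (1) first by a single coefficient-extraction device, and then the classification (3), which is the substantial part. Fix $\lmd\in\C^\times$. Since $\lmd\neq0$, a subspace of $\Omega(\lmd,a)=\C[t]$ is a $\V$-submodule if and only if it is invariant under all $D_n:=\lmd^{-n}L_n$, acting by $D_nf(t)=(t+an)f(t+n)$ (the center acts as $0$, so it imposes nothing). The key observation I would use is that, for a fixed $f\in\C[t]$ of degree $d$, the vector $D_nf$ is a polynomial in the integer $n$ of degree at most $d+1$ with coefficients in $\C[t]$, the coefficient of $n^{d+1}$ being the constant $a\,\mathrm{lc}(f)$, where $\mathrm{lc}(f)$ is the leading coefficient of $f$. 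Hence, given a nonzero submodule $W$ and $0\neq f\in W$, choosing $d+2$ distinct integers $n$ and inverting the resulting Vandermonde system shows that all of these coefficients lie in $W$. When $a\neq0$ this produces a nonzero constant in $W$, so $1\in W$ and then $t^k=D_0^k\cdot1\in W$ for all $k$, giving $W=\C[t]$; thus $\Omega(\lmd,a)$ is irreducible. When $a=0$, the proper nonzero subspace $t\Omega(\lmd,0)=t\C[t]$ is a submodule, so $\Omega(\lmd,0)$ is reducible. This settles (2).

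For (1) I would apply the same extraction with $a=0$: now $D_nf(t)=t\,f(t+n)$, the coefficient of $n^{d}$ in $D_nf$ equals $t\,\mathrm{lc}(f)$, so every nonzero submodule $W$ contains $t$, and therefore contains $t^{k}=D_0^{k-1}\cdot t$ for all $k\ge1$; that is, $W\supseteq t\C[t]=t\Omega(\lmd,0)$. Moreover the linear map $f\mapsto tf$ is a $\V$-module isomorphism of $\Omega(\lmd,1)$ onto $t\Omega(\lmd,0)$, so $t\Omega(\lmd,0)$ is irreducible by (2). Being irreducible and contained in every nonzero submodule, $t\Omega(\lmd,0)$ is the unique irreducible submodule, which proves (1).

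For the classification (3), I would identify $V$ with $\C[t]$ via its free rank-one $\U(\C L_0)$-structure, so that $L_0$ acts as multiplication by $t$. From $[L_0,L_n]=-nL_n$ one gets $L_n(tf)=(t+n)L_nf$ for all $f$; setting $g_n:=L_n\cdot1\in\C[t]$ and iterating then yields
\[
L_nf(t)=g_n(t)\,f(t+n)\qquad(n\in\Z,\ f\in\C[t]),\qquad g_0(t)=t.
\]
Since $C_0$ commutes with $L_0$ it acts $\C[t]$-linearly, i.e. as multiplication by some $\gamma(t)\in\C[t]$, and commuting with any $L_n$ forces $\gamma(t+n)=\gamma(t)$, so $C_0$ acts as a scalar $\gamma\in\C$.

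Substituting this form into \eqref{eq2.1} gives, for all $m,n$,
\[
g_m(t)g_n(t+m)-g_n(t)g_m(t+n)=(m-n)g_{m+n}(t)\quad(m+n\neq0),
\]
\[
g_m(t)g_{-m}(t+m)-g_{-m}(t)g_m(t-m)=2mt+\tfrac1{12}(m^3-m)\gamma .
\]
The step I expect to be the main obstacle is the degree analysis forcing $\deg g_n=1$ for all $n\neq0$. In each product the leading terms cancel, and a computation of the next coefficient shows that the coefficient of $t^{\deg g_m+\deg g_{-m}-1}$ on the left of the second identity equals the product of the leading coefficients of $g_m$ and $g_{-m}$ times $m(\deg g_m+\deg g_{-m})$. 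Comparing with the right-hand side, whose degree is $1$, rules out $g_n=0$ and forces $\deg g_n+\deg g_{-n}\le2$ for every $n$; feeding this back into the recursions coming from $[L_{\pm1},L_n]$ eliminates the degenerate cases and pins down $\deg g_n=1$. Writing $g_n(t)=a_nt+b_n$ and comparing coefficients in the first identity gives $a_{m+n}=a_ma_n$, hence $a_n=\lmd^n$ with $\lmd=a_1\in\C^\times$; setting $b_n=\lmd^nc_n$ reduces the constant terms to $mc_m-nc_n=(m-n)c_{m+n}$, whose solutions are $c_n=an+\kappa$. Finally the second identity becomes $c_m+c_{-m}=\tfrac1{12}(m^2-1)\gamma$ for all $m$, which is consistent only if $\gamma=0$ and $\kappa=0$. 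Hence $g_n(t)=\lmd^n(t+an)$ and $C_0$ acts as $0$, so $V\cong\Omega(\lmd,a)$, completing (3).
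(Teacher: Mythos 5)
Your argument is correct in substance, but there is nothing in the paper to compare it against line by line: the paper does not prove Proposition \ref{prop2.6} at all, it simply imports the statement from \cite{LZ2} and \cite[Theorem 3]{TZ}. What you have written is a self-contained reconstruction of the content of those references, and it follows the same standard route they take: identify $V$ with $\C[t]$ so that $L_0$ acts as multiplication by $t$; use $[L_0,L_n]=-nL_n$ to get $L_nf(t)=g_n(t)f(t+n)$ with $g_n=L_n\cdot 1$; convert \eqref{eq2.1} into functional equations for the $g_n$; and force $g_n(t)=\lmd^n(t+an)$ and $C_0=0$ by degree bookkeeping. Your Vandermonde coefficient-extraction proving (1) and (2) is also the same device the paper itself uses later, in the proof of Proposition \ref{prop5.1}, for tensor products; and your observation that $f\mapsto tf$ gives an isomorphism $\Omega(\lmd,1)\cong t\Omega(\lmd,0)$ is a tidy economy, since it yields irreducibility of the minimal submodule directly from (2) rather than by a separate argument.

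Two points in your part (3) need patching, both minor. First, the claim that ``commuting with any $L_n$ forces $\gamma(t+n)=\gamma(t)$'' is only valid when $g_n\neq 0$, and at that stage you have not yet excluded $g_n=0$; as written, the scalar nature of $C_0$ is used to set up the very identities that later rule out $g_n=0$, which is circular. The repair is one line: the $m=1$ instance of your second identity reads $g_1(t)g_{-1}(t+1)-g_{-1}(t)g_1(t-1)=2t$ with no central term (since $m^3-m=0$), so it holds whatever $C_0$ does and forces $g_{\pm1}\neq 0$; then $\gamma$ is constant, and your analysis proceeds. Second, the elimination of the degenerate degree patterns $(\deg g_n,\deg g_{-n})=(0,2)$ or $(2,0)$ is only waved at. It does work: for instance, if $g_n$ were a nonzero constant, the identity coming from $[L_n,L_{-2n}]$ gives $\deg g_{-2n}-1=\deg g_{-n}=2$, i.e.\ $\deg g_{-2n}=3$, contradicting $\deg g_{2n}+\deg g_{-2n}=2$. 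But since you yourself flag this step as the main obstacle, it should be written out rather than asserted.
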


We also record the following elementary results as a lemma for later use (cf. \cite[Lemma 6, Theorem 7]{LZ3}).
\begin{lemma}\label{lem2.7}
 Let $\mathcal L$ be a Lie algebra of countable dimension.\\
 (1) Let $V$ be an irreducible $\mathcal L$-module.
 For any $n\in\Z_+$ and any linearly independent subset $\{v_1,v_2,\dots,v_n\}\subset V$,
 and any subset $\{v'_1,v'_2,\dots,v'_n\}\subset V$, there exists $u\in\U(\mathcal L)$ such that
 $$uv_i=v'_i\quad\text{ for all } 1\le i\le n.$$
 (2) Let $V_1,V_2$ be $\mathcal L$-modules.
 If for any finite subset $S$ of $V_2$, $V_1$ is an irreducible module over the subalgebra
 $\mathrm{ann}_L(S)=\{x\in\mathcal L\mid xS=0\}$ of $\mathcal L$,
 then any $\mathcal L$-submodule of $V_1\otimes V_2$ is of the form $V_1\otimes V'_2$,
 where $V'_2$ is a submodule of $V_2$.
\end{lemma}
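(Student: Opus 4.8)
The plan is to read part (1) as an instance of the Jacobson density theorem, the only substantive point being that the commutant of $V$ is as small as possible. First I would invoke a countable-dimension Schur lemma: since $V$ is irreducible it is cyclic, hence a quotient of $\U(\mathcal L)$ and therefore of countable dimension over $\C$, so $D:=\mathrm{End}_{\mathcal L}(V)$ is a division algebra of countable dimension over the uncountable algebraically closed field $\C$. Any $\phi\in D$ generates a commutative subfield $\C(\phi)\subseteq D$; were $\phi$ transcendental, $\C(\phi)\cong\C(x)$ would contain the uncountable linearly independent family $\{(\phi-a)^{-1}\mid a\in\C\}$, contradicting countability of $\dim_\C D$. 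Hence $\phi$ is algebraic, and since $\C$ is algebraically closed and $D$ has no zero divisors, $\phi$ is a scalar; thus $D=\C$. With the commutant equal to $\C$, the $\C$-linear independence of $\{v_1,\dots,v_n\}$ is exactly $D$-linear independence, and Jacobson density applied to the simple $\U(\mathcal L)$-module $V$ yields $u\in\U(\mathcal L)$ with $uv_i=v'_i$ for all $i$.

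For part (2) I would deduce everything from part (1). Let $W$ be a nonzero $\mathcal L$-submodule of $V_1\otimes V_2$ and choose $0\neq w\in W$ with an expression $w=\sum_{i=1}^k u_i\otimes v_i$ in which $k$ is as small as possible and $v_1,\dots,v_k\in V_2$ are linearly independent. Minimality forces $u_1,\dots,u_k\in V_1$ to be linearly independent as well, since a dependence among the $u_i$ would, after substitution, rewrite $w$ with fewer than $k$ tensor factors. Put $S=\{v_1,\dots,v_k\}$ and $A=\mathrm{ann}_{\mathcal L}(S)$. A short induction on length shows that for $a\in\U(A)$ one has $a\cdot w=\sum_{i=1}^k (a u_i)\otimes v_i$, because every generator of $A$ kills each $v_i$; thus $\U(A)$ moves only the first tensor factor of $w$.

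Now $A$ is again of countable dimension, being a subalgebra of $\mathcal L$, and $V_1$ is irreducible over $A$ by hypothesis, so part (1) applies to the $A$-module $V_1$. Feeding it the independent family $\{u_1,\dots,u_k\}$ and the target $(u'_1,0,\dots,0)$ with $u'_1\in V_1$ arbitrary produces $a\in\U(A)$ with $a u_1=u'_1$ and $a u_i=0$ for $i\geq 2$; then $a\cdot w=u'_1\otimes v_1\in W$, so $V_1\otimes v_1\subseteq W$, and symmetrically $V_1\otimes v_i\subseteq W$ for every $i$. Define $V'_2=\{v\in V_2\mid V_1\otimes v\subseteq W\}$. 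The inclusion $V_1\otimes V'_2\subseteq W$ holds by definition, while applying the minimal-expression argument to an arbitrary element of $W$ shows each of its $v_i$ lies in $V'_2$, giving $W\subseteq V_1\otimes V'_2$. Finally $V'_2$ is $\mathcal L$-stable: for $v\in V'_2$, $x\in\mathcal L$ and $u\in V_1$ we have $u\otimes xv=x\cdot(u\otimes v)-xu\otimes v\in W$, whence $xv\in V'_2$. Therefore $W=V_1\otimes V'_2$ with $V'_2$ a submodule.

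I expect the only genuinely delicate step to be the commutant computation $D=\C$ in part (1): it is precisely here that both standing hypotheses — the countable dimension of $\mathcal L$ and the fact that $\C$ is uncountable and algebraically closed — are used, and it is what licenses the clean form of Jacobson density over $\C$ rather than over a possibly larger division ring. Everything afterward, including all of part (2), is formal tensor manipulation reduced to part (1), the minimality and linear-independence bookkeeping being the only points that require care.
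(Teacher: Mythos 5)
Your proof is correct and takes essentially the same route as the paper's source for this result: the paper itself states the lemma without proof, citing \cite[Lemma 6, Theorem 7]{LZ3}, where part (1) is obtained exactly as you do (countable-dimension Schur lemma giving $\mathrm{End}_{\mathcal L}(V)=\C$, then the Jacobson density theorem) and part (2) by the same reduction to part (1) over the annihilator subalgebra $\mathrm{ann}_{\mathcal L}(S)$. The only step you compress is why $\mathrm{End}_{\mathcal L}(V)$ has countable dimension — one should note the evaluation map $\phi\mapsto\phi(v_0)$ at a fixed nonzero vector embeds it into $V$ — but this is standard and does not affect correctness.
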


%%%%%%%%%%%%%%%%%%%%%%%%%%%%%%%%%%%%%%%%%%%%%%%%%%%%%%%%%%%%%%%%Section 3
\section{Whittaker $\g$-modules}
Throughout this section, $m$ is a fixed positive integer. We have the following graded subalgebra
$$\g^{(m)}=\spanc{L_{m+k},\elei ki,C_0,C_i\ \mid\ k\in\N,1\leq i\leq p-1}$$
of $\g$.
Associated to the subalgebra $\g^{(m)}$, we  have the notions of a Whittaker function $\varphi$,
a Whittaker $\g$-module of type $\varphi$, and the  universal Whittaker $\g$-module
$M_{\g,\varphi}$ of type $\varphi$ with a universal Whittaker vector $\vvf$.
Note that for any Whittaker function $\varphi$ on $\g^{(m)}$, one has
\begin{equation}\label{eq3.1}
\vf(L_{2m+j})=\vf\(\elei {m+j-1}i\)=0\ \ \ \ \text{for any }j\in\Z_+, 1\le i\le p-1.
\end{equation}

In this section we give a free field realization of certain irreducible Whittaker $\g$-modules, and
determine the irreducibility of universal Whittaker $\g$-modules. To this end, we need to work in a more general setting.
Let $I$ be a fixed nonempty symmetrical subset  of $\{1,2,\dots,p-1\}$ throughout this section.
We have the subalgebra
\[
\g_I^{(m)}=\g^{(m)}\cap \g_I
\]
of $\g_I$, and the notion of a Whittaker function $\phi$ on $\g_I^{(m)}$.
We will consider Whittaker $\g_I$-modules of type $\phi$, including the universal one $M_{\g_I,\phi}$.

%%%%%%%%%%%%%%%%%%%%%%%%%%%%%%%%%%%%%%%%%%%%%%%%%%%%%%%%%%%%%subsection 4.1
\subsection{Free field realizations of irreducible Whittaker $\g$-modules}
We start by constructing restricted $\g$-modules from restricted $\h_I$-modules of generic levels.

\begin{proposition}\label{proprealize}
 Let $V$ be a restricted $\hi$-module of generic level $\un l=(l_j)_{j\in I}\in \C^{|I|}$.
 Then there exists a restricted $\g$-module structure on $V$ given by
 \begin{align}
  \elei ni &\mapsto \begin{cases} \elei ni\ &\text{if}\ i\in I,\\
                                        0\ &\text{if}\ i\notin I,\end{cases}\ \ \ \ \quad
  C_i\mapsto \begin{cases} l_i\ &\text{if}\ i\in I,\\
                            0 \ &\text{if}\ i\notin I,\end{cases},\quad C_0\mapsto |I|,\label{eq:Iaction}\\
    L_n   &\mapsto \sum_{j\in I}\frac{1}{2l_j}\left(\sum_{k\in \Z} :
                    \elei kj\elei{n-k-1}{p-j}:\right)
                   +\dt_{n,0}\sum_{j\in I}\frac{j(p-j)}{4p^2},       \label{eq:Lnaction}
 \end{align}
 where $n\in\Z, 1\leq i\leq p-1$ and the normal ordered product is defined by
\begin{equation*}
   :\elei ki\elei l{p-i}:=\begin{cases}
      \elei ki\elei l{p-i}\quad&\text{ if }k<l,\\
      \elei l{p-i}\elei ki \quad&\text{ if }k\ge l.
   \end{cases}
\end{equation*}
\end{proposition}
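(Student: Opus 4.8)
The plan is to verify that the proposed formulas satisfy the defining Lie brackets of $\g$, namely \eqref{eq2.1}--\eqref{eq2.3}, when acting on $V$. This is a standard free-field (Sugawara-type) construction: the elements $\elei ni$ act as given Heisenberg operators, while the $L_n$ are built as normal-ordered quadratic expressions in these operators. Before checking brackets, I would first confirm that each operator in \eqref{eq:Lnaction} is well-defined on $V$: since $V$ is restricted, for each fixed $v\in V$ all but finitely many terms $\elei kj\elei{n-k-1}{p-j}v$ vanish (the normal ordering ensures the annihilating mode sits on the right), so the infinite sum over $k\in\Z$ is a finite sum on each vector. The same restrictedness will give that the resulting $\g$-module is itself restricted, which I would record at the end.

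**Next I would check the three families of brackets in turn.** The relation $[L_m,\elei ni]=-(n+\tfrac ip)\elei{m+n}i$ follows from the Heisenberg commutator \eqref{eq2.2}: computing $[L_m,\elei ni]$ reduces, via $[:\elei kj\elei{m-k-1}{p-j}:,\elei ni]$, to isolating the pair with $j=i$ (resp.\ $j=p-i$) using $[\elei aj,\elei b{p-j}]=(a+\tfrac jp)\dt_{a+b+1,0}C_j$; the factor $\tfrac1{2l_j}$ and the scalar $l_j=C_j$ combine to produce the coefficient $-(n+\tfrac ip)$, and the two symmetric terms $j=i$ and $j=p-i$ each contribute half. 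Since $\elei ni$ acts as $0$ for $i\notin I$, and the summation in \eqref{eq:Lnaction} runs only over $j\in I$ (with $p-j\in I$ by symmetry), the bracket closes correctly in both cases. The relation $[\elei mi,\elei nj]=(m+\tfrac ip)\dt_{i+j,p}\dt_{m+n+1,0}C_i$ holds by construction, as the action on $V$ is the restriction of the $\hi$-action for $i,j\in I$ and is zero otherwise.

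**The main obstacle is the Virasoro relation \eqref{eq2.1}**, $[L_m,L_n]=(m-n)L_{m+n}+\tfrac1{12}(m^3-m)C_0\dt_{m+n,0}$, and in particular reproducing the central term with the correct central charge $C_0=|I|$. Here the normal ordering is essential: the naive bracket of two quadratic expressions yields the expected $(m-n)L_{m+n}$ part together with an anomalous c-number arising from reordering, computed via the commutator of the Heisenberg modes across the normal-ordering boundary. I would organize this as a Sugawara computation for each Heisenberg pair $\h_{\{j,p-j\}}$ with $j\in I_0$, using \eqref{eq:dechI}; each rank-one $\beta\gamma$-type (symplectic bosonic) pair contributes central charge $2$ to the Virasoro cocycle, and summing over the $|I_0|=|I|/2$ pairs (or equivalently counting each $j\in I$ once with weight $1$) yields total central charge $|I|$, matching $C_0\mapsto|I|$. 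The conformal-weight shift $\dt_{n,0}\sum_{j\in I}\tfrac{j(p-j)}{4p^2}$ in $L_0$ is precisely the zero-mode normal-ordering constant needed so that $L_0$ eigenvalues and the $m^3-m$ normalization of the cocycle come out correctly; I would verify this constant by a direct computation of the anomaly at $m=-n$. The bulk of the work is this one cocycle calculation, which I would carry out on an arbitrary vector using the finiteness afforded by restrictedness, so that all formal manipulations are legitimate rearrangements of finite sums rather than formal power-series identities.
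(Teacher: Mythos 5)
Your proposal is correct, and at the top level it is the same strategy as the paper's proof: verify directly that the operators in \eqref{eq:Iaction}--\eqref{eq:Lnaction} satisfy \eqref{eq2.1}--\eqref{eq2.3}, with \eqref{eq2.2} holding by construction, \eqref{eq2.3} coming from the two contractions $j=i$ and $j=p-i$ (each giving half of $-(n+\frac ip)\elei{m+n}i$, exactly as in the paper), and the Virasoro bracket carrying all the real work. The genuine difference is how the infinite sums are controlled. The paper uses the cutoff trick: it inserts $\psi(k\ep)$ into \eqref{eq:Lnaction}, computes $[\elei ni,L_s(\ep)]$ and $[L_s(\ep),L_n]$ as exact operator identities, and lets $\ep\to0$; this produces in a single computation both the coboundary term $2s\dt_{s+n,0}\sum_{j\in I}\frac{j(p-j)}{4p^2}$ (absorbed by the constant shift in $L_0$) and the central term $\frac1{12}(s^3-s)|I|$, so the central charge $|I|$ falls out directly rather than being assembled from known free-field facts. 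You instead propose working vector-by-vector, using restrictedness to make every sum finite, and organizing the anomaly through the decomposition \eqref{eq:dechI} into Heisenberg pairs, each of central charge $2$. That route is also legitimate, and arguably more self-contained analytically, since every manipulation is a rearrangement of finite sums; but two points need care if you write it out. First, when $p$ is even and $\frac p2\in I$, the block $\h_{\{p/2\}}$ is a single half-integer-moded boson of central charge $1$, so the count is not ``$|I|/2$ pairs each of charge $2$''; your parenthetical (each $j\in I$ contributes $1$) is the statement that must actually be proved. Second, the finite index sets in the vector-wise computation depend on the chosen vector, so before invoking the operator identity expressing each $[\,\elei kj\elei{m-k-1}{p-j},\elei lj\elei{n-l-1}{p-j}\,]$ as normal-ordered quadratic terms plus a scalar, you must enlarge all truncations to a common finite index set and only then reindex and resum --- this is precisely the bookkeeping that the paper's cutoff function automates.
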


\begin{proof}
 It is clear that the operators given in \eqref{eq:Iaction} and \eqref{eq:Lnaction} satisfy the relation \eqref{eq2.2}.
 In what follows, we prove that they also satisfy the relations \eqref{eq2.1} and \eqref{eq2.3}.

 Using the ``cutoff" procedure,  for any $n\in\Z$ we set
 $$L_n(\ep)=\sum_{j\in I}\frac{1}{2l_j}\left(\sum_{k\in \Z} :\elei kj\elei{n-k-1}{p-j}:\psi(k\ep)\right),$$
 where the function $\psi:\R\longrightarrow\R$ is defined by
 $$\psi(x)=1\ \text{ if }|x|\leq 1,\qquad \psi(x)=0\ \text{ if }|x|> 1.$$
 Notice that $L_n(\ep)\rightarrow L_n$ as $\ep\rightarrow0$, and $L_n(\ep)$ differs from
 $$\sum_{j\in I}\frac{1}{2l_j}\left(\sum_{k\in \Z} \elei kj\elei{n-k-1}{p-j}\psi(k\ep)\right)$$
 only by a scalar. For any $i\in I,s,n\in\Z$, we have
 \begin{align*}
  &[\elei ni,L_s(\ep)]=\sum_{j\in I}\frac{1}{2l_j}\left(\sum_{k\in \Z}
    [\elei ni,\elei kj\elei{s-k-1}{p-j}]\psi(k\ep)\right)\\
  &=\sum_{j\in I}\frac{1}{2l_j}(n+\frac ip)\left(\sum_{k\in \Z}
       \dt_{n+k+1,0}\dt_{i+j,p}\elei{s-k-1}{p-j}l_i\psi(k\ep)+
         \dt_{n+s,k}\dt_{i,j}\elei{k}{j}l_i\psi(k\ep)  \right)\\
  &=\frac12(n+\frac ip)\elei {s+n}i\Big(\psi\big(\ep(n+1)\big)+\psi\big(\ep(s+n)\big)\Big).
 \end{align*}
 By taking the limit $\ep\rightarrow0$, it gives \eqref{eq2.3}.

 For any $s,n\in\Z$, we have
 \begin{align*}
  &[L_s(\ep),L_n]=\sum_{j\in I}\frac{1}{2l_j}\left(\sum_{k\in \Z}
                 [\elei kj\elei{s-k-1}{p-j},L_n]\psi(k\ep)\right)\\
  =&\sum_{j\in I}\frac{1}{2l_j}\left(\sum_{k\in \Z}(k+\frac jp)
     \elei {n+k}j\elei{s-k-1}{p-j}\psi(k\ep)\right)+
     \sum_{j\in I}\frac{1}{2l_j}\left(\sum_{k\in \Z}(s-k-\frac jp)
      \elei {k}j\elei{s+n-k-1}{p-j}\psi(k\ep)\right)\\
  =&\sum_{j\in I}\frac{1}{2l_j}\left(\sum_{k\in \Z}(k+\frac jp)
                                :\elei {n+k}j\elei{s-k-1}{p-j}:\psi(k\ep)
                                +\sum_{k\geq\frac{s-n-1}2}(k+\frac jp)
          [\elei {n+k}j,\elei{s-k-1}{p-j}]\psi(k\ep)\right)\\
       &+\sum_{j\in I}\frac{1}{2l_j}\left(\sum_{k\in \Z}(s-k-\frac jp)
                                :\elei {k}j\elei{s+n-k-1}{p-j}:\psi(k\ep)
                                +\sum_{k\geq\frac{s+n-1}2}(s-k-\frac jp)
        [\elei {k}j,\elei{s+n-k-1}{p-j}]\psi(k\ep)\right)\\
  =&\sum_{j\in I}\frac{1}{2l_j}\left(\sum_{k\in \Z}(k-n+\frac jp)
      :\elei {k}j\elei{s+n-k-1}{p-j}:\psi\big((k-n)\ep\big)+(s-k-\frac jp)
      :\elei {k}j\elei{s+n-k-1}{p-j}:\psi(k\ep)\right)\\
          &\ +\frac12\dt_{s+n,0}
            \sum_{j\in I}\sum_{-\frac12\leq k<s-\frac12}
          (k+\frac jp)(s-k-\frac jp)\psi(k\ep).
 \end{align*}
 Taking the limit $\ep\rightarrow0$, we get
 \begin{align*}
  [L_s,L_n]=(s-n)\sum_{j\in I}\frac{1}{2l_j}\left(\sum_{k\in \Z}
                                :\elei {k}j\elei{s+n-k-1}{p-j}:\right)
    +2s\dt_{s+n,0}\sum_{j\in I}\frac{j(p-j)}{4p^2}+\dt_{s+n,0}\frac1{12}(s^3-s)|I|,
 \end{align*}
 which amounts to \eqref{eq2.1}.
\end{proof}

\begin{remark}
    In the Appendix, we will give a  vertex algebra  interpretation of Proposition \ref{proprealize}.
   The scalar $\sum_{j\in I}\frac{j(p-j)}{4p^2}$ in \eqref{eq:Lnaction} is in fact obtained by using the formula on the conformal vector action of twisted modules given in \cite[(1.14)]{KW}.
\end{remark}

Set
\[\h_I^+=\spanc{\elei ni,C_i\ \mid\ i\in I, n\in\N},\]
which is a subalgebra of $\h_I$ and a subalgebra of $\g^{(m)}$.
In fact, we have $\h_I^+=\h_I\cap \g^{(m)}$.

\begin{proposition}\label{prophimod}
 Let $\phi$ be a Whittaker function on $\h_I^+$.
 Then the universal Whittaker $\h_I$-module $M_{\h_I,\phi}$ is irreducible
 if and only if  $\phi(C_i)\ne 0$ for all $i\in I$.
\end{proposition}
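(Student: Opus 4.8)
The plan is to reduce the whole statement to a single Heisenberg ``pair'' and then run a Fock-space argument. Since $I$ is symmetrical, \eqref{eq:dechI} gives the decomposition $\hi=\oplus_{i\in I_0}\h_{\{i,p-i\}}$ into mutually commuting ideals, and correspondingly $\hi^+=\oplus_{i\in I_0}\h_{\{i,p-i\}}^+$ together with $v_\phi=\otimes_{i\in I_0}v_{\phi_i}$, where $\phi_i=\phi|_{\h_{\{i,p-i\}}^+}$. Because induction from a subalgebra commutes with tensor products over a direct sum, this yields an isomorphism of $\hi$-modules
\[
M_{\hi,\phi}\cong\bigotimes_{i\in I_0}M_{\h_{\{i,p-i\}},\phi_i},
\]
with $\hi$ acting diagonally, each ideal $\h_{\{i,p-i\}}$ acting nontrivially only on its own factor. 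Thus it suffices to prove the single-pair statement, that $M_{\h_{\{i,p-i\}},\phi_i}$ is irreducible if and only if $\phi_i(C_i)\neq0$, and then to reassemble the factors.

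For the single-pair claim, write the pair algebra with modes $\elei mi,\elei m{p-i}$ ($m\in\Z$) and $C_i$ acting as the scalar $\ell:=\phi_i(C_i)$; the only nonvanishing brackets are $[\elei mi,\elei{-m-1}{p-i}]=(m+\tfrac ip)\ell$ and its antisymmetric partner (the two families coincide in the degenerate case $i=p-i$, and the argument below is unaffected). By PBW, $M_{\h_{\{i,p-i\}},\phi_i}$ has a basis of ordered monomials in the negative modes applied to $v_{\phi_i}$, and the negative modes commute among themselves. Suppose first $\ell=0$. Then every positive mode acts as the scalar $\phi_i(\cdot)$ on the entire module while the negative modes act freely; hence the span $W$ of all monomials of positive degree in the negative modes is a proper nonzero submodule, and $M_{\h_{\{i,p-i\}},\phi_i}$ is reducible. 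This settles the ``only if'' direction at the single-pair level.

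Now suppose $\ell\neq0$. For $m\in\N$ introduce the annihilation operators $A_m=\elei mi-\phi_i(\elei mi)$ and $B_m=\elei m{p-i}-\phi_i(\elei m{p-i})$, both of which kill $v_{\phi_i}$. Since $0<\tfrac ip<1$, one has $m+\tfrac ip>0$ and $m+1-\tfrac ip>0$ for every $m\geq0$, and a direct computation gives $[A_m,\elei{-m-1}{p-i}]=(m+\tfrac ip)\ell$ and $[B_m,\elei{-m-1}i]=(m+1-\tfrac ip)\ell$, while all remaining commutators of the $A_m,B_m$ with negative modes and with one another vanish. Thus $M_{\h_{\{i,p-i\}},\phi_i}$ is a bosonic Fock space: the commuting creation operators $\{\elei{-m-1}i,\elei{-m-1}{p-i}\}$ act freely on $v_{\phi_i}$, and each creation mode is detected by exactly one of the commuting annihilation operators $A_m,B_m$ through a nonzero scalar. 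Given a nonzero submodule and $0\neq w$ in it, I expand $w$ in the PBW basis, choose a monomial of maximal total degree occurring in $w$, and apply the matching product of annihilation operators: over-annihilated terms and mismatched top-degree monomials are killed, leaving a nonzero scalar multiple of $v_{\phi_i}$. Hence $v_{\phi_i}$ lies in the submodule, which is therefore all of $M_{\h_{\{i,p-i\}},\phi_i}$, proving irreducibility. This Fock-space extraction is the part I expect to require the most care.

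Finally I assemble the pairs by induction on $|I_0|$ using Lemma~\ref{lem2.7}(2). Writing $M_{\hi,\phi}=V_1\otimes V_2$ with $V_1=M_{\h_{\{i_0,p-i_0\}},\phi_{i_0}}$ and $V_2$ the tensor product over the remaining pairs, note that $\h_{\{i_0,p-i_0\}}$ annihilates $V_2$, so $\h_{\{i_0,p-i_0\}}\subseteq\mathrm{ann}_{\hi}(S)$ for every finite $S\subset V_2$. When $\phi(C_{i_0})\neq0$, $V_1$ is irreducible over $\h_{\{i_0,p-i_0\}}$, hence over $\mathrm{ann}_{\hi}(S)$, and Lemma~\ref{lem2.7}(2) forces every submodule of $M_{\hi,\phi}$ to be of the form $V_1\otimes V_2'$; the inductive hypothesis that $V_2$ is irreducible (all remaining $\phi(C_i)\neq0$) then gives the ``if'' direction. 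Conversely, if $\phi(C_i)=0$ for some $i\in I$, the corresponding single-pair factor is reducible by the case $\ell=0$, so tensoring its proper submodule with the other factors produces a proper nonzero submodule of $M_{\hi,\phi}$, which is therefore reducible.
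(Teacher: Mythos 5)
Your proof is correct, and it shares the paper's skeleton: both arguments reduce through the decomposition $\hi=\oplus_{i\in I_0}\h_{\{i,p-i\}}$ of \eqref{eq:dechI} and the resulting isomorphism $M_{\hi,\phi}\cong\otimes_{i\in I_0}M_{\h_{\{i,p-i\}},\phi_i}$. The difference is that the paper outsources both remaining steps, while you prove them. For the single-pair criterion the paper simply cites \cite{Chr}; you give a direct Fock-space argument: when $\ell=0$ the positive modes act by the scalars $\phi_i(\cdot)$ on the whole module, so the positive-degree monomials span a proper nonzero submodule, and when $\ell\neq0$ your annihilation operators $A_m,B_m$ act as nonzero multiples ($(m+\tfrac ip)\ell$ and $(m+1-\tfrac ip)\ell$, both nonzero since $0<\tfrac ip<1$) of partial derivatives against the commuting creation modes, so the usual top-degree extraction places $v_{\phi_i}$ in any nonzero submodule; your remark that the degenerate case $i=p-i$ collapses to a single family without harm is also right. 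For the reassembly, the paper merely asserts that the tensor product is irreducible if and only if each factor is; you justify the nontrivial direction by induction on $|I_0|$ via Lemma \ref{lem2.7}(2), which indeed applies because $\hi$ has countable dimension and $\h_{\{i_0,p-i_0\}}\subseteq\mathrm{ann}_{\hi}(S)$ for every finite $S\subset V_2$, and you get the easy direction by tensoring a proper submodule of a reducible factor with the remaining factors. The paper's route buys brevity; yours buys self-containedness, and your explicit Fock picture is essentially the same structure the paper itself exploits later in the free field realization of Proposition \ref{prop:FFR}.
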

\begin{proof}
 Recall the direct sum decomposition $\h_I=\oplus_{i\in I_0}\h_{\{i,p-i\}}$ of $\h_I$ from \eqref{eq:dechI}.
 Then we have
 \[
   M_{\h_I,\phi}\cong\otimes_{i\in I_0} M_{\h_{\{i,p-i\}},\phi_i}
 \]
 as $\h_I$-modules, where $\phi_i$ denotes the restriction of $\phi$ on $\h_I^+\cap \h_{\{i,p-i\}}$.
 This implies that the $\h_I$-module $M_{\h_I,\phi}$ is irreducible if and only if so is the $\h_{\{i,p-i\}}$-module $M_{\h_{\{i,p-i\}},\phi_i}$ for every $i\in I_0$.
 Then the assertion follows from the fact that $M_{\h_{\{i,p-i\}},\phi_i}$ is irreducible if and only if $\phi_i(C_i)\neq0$ (see \cite{Chr}).
\end{proof}

Let $\phi$ be a Whittaker function on $\h_I^+$ such that
\begin{eqnarray}\label{eq:cononphi}
    l_i=\phi(C_i)\ne 0\quad \text{and}\quad \phi(I_n^{(i)})=0\quad \text{for all}\ i\in I, n\ge m.
\end{eqnarray}
Extend $\phi$ to a Whittaker function $\phi^e$ on $\g^{(m)}$ by
\begin{equation}\begin{aligned}\label{eq:defphie}
    \phi^e(I_n^{(i)})&=\phi^e(C_i)=\phi^e(L_l)=0,\quad
    \phi^e(C_0)=|I|,\quad \text{ for }i\in I^c, n\in \N, l>2m, \\
    \phi^e(L_n)&=\sum\limits_{j\in I}\frac1{2l_j}\left(\sum\limits_{k=0}^{m-1}
                            \phi\(I_k^{(j)}\)\phi\(I_{n-k-1}^{(p-j)}\)\right),
    \quad \text{ for }m\le n\le 2m.
\end{aligned}
\end{equation}
Applying Proposition \ref{proprealize}, we obtain the following free field realization of certain irreducible Whittaker $\g$-modules.

\begin{proposition} \label{prop:FFR}
 Let $\phi$ be a Whittaker function on $\h_I^+$  satisfying the conditions in \eqref{eq:cononphi}.
 Then there is a $\g$-module structure on the polynomial ring
 \[
  P_I=\C[t_{n,i}\mid n\in \Z_+, i\in I]
 \]
 such that
 \begin{align} \label{eq:ffract}
    I_{-n}^{(i)}&\mapsto t_{n,i},\quad I_{n-1}^{(p-i)}\mapsto \phi(C_i)\left(n-\frac{i}{p}\right)
     \frac{\partial}{\partial t_{n,i}}+\phi(I_{n-1}^{(p-i)}),\quad
     C_i\mapsto \phi(C_i)\quad\text{for}\ i\in I, n\in \Z_+,\\
     I_n^{(j)}&\mapsto 0,\quad C_j\mapsto 0,\quad C_0\mapsto |I|\quad
     \text{for}\ j\in I^C, n\in \Z, \notag
 \end{align}
 and $L_n$ acts as in \eqref{eq:Lnaction} for $n\in \Z$.
 Furthermore, the resulting $\g$-module  is an irreducible Whittaker module of type $\phi^e$.
\end{proposition}
\begin{proof}
 It is straightforward to check that the action \eqref{eq:ffract} makes $P_I$ a Whittaker $\h_I$-module of type $\phi$.
 And by the PBW theorem we have $P_I\cong M_{\h_I,\phi}$ as $\h_I$-modules.
 Then $P_I$ becomes a Whittaker $\g$-module of type $\phi^e$ with action defined in the proposition by Proposition \ref{proprealize}, and is irreducible by \eqref{eq:cononphi} and Proposition \ref{prophimod}.
 This completes the proof.
\end{proof}

\subsection{Certain tensor product  $\g$-modules}\label{subsec:tengmod}
Let $V$ be a restricted $\h_I$-module of generic level, and let $W$ be a $\g_{I^C}$-module.
We regard $V$ as a $\g$-module by Proposition \ref{proprealize}, denoted by $V^{\g}$.
Furthermore, we extend $W$ to a $\g$-module by letting $\h_I W=0$, denoted by $W^e$.
Then we obtain in this way a tensor product $\g$-module
\[
 V^{\g}\otimes W^e.
\]

\begin{proposition}\label{proptengmod}
 Let $V,V'$ be irreducible restricted $\h_I$-modules of generic levels,
 and let $W,W'$ be $\g_{I^C}$-modules.
Then\\
 (1) every $\g$-submodule of $V^{\g}\otimes W^e$ is of the form $V^\g\otimes (W_0)^e$ for some $\gic$-submodule $W_0$ of $W$. In particular,  $V^{\g}\otimes W^e$ is irreducible if and only if the $\gic$-module $W$ is irreducible.\\
 (2)  $V^{\g}\otimes W^e\cong V'^{\g}\otimes W'^e$ if and only if $V\cong V'$ as $\h_I$-modules and $W\cong W'$ as $\gic$-modules.
\end{proposition}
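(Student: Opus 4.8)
The plan is to obtain part (1) directly from Lemma~\ref{lem2.7}(2), to dispose of the ``if'' half of part (2) by transporting isomorphisms through the two constructions, and to prove the ``only if'' half by reconstructing $W$ from the tensor product through a multiplicity-space functor. Two structural observations drive everything. Since $\h_I$ is an ideal of $\g$ and $\g=\gic\oplus\h_I$ as vector spaces, one has $\g/\h_I\cong\gic$; because $\h_I$ acts as $0$ on $W^e$, a subspace of $W^e$ is a $\g$-submodule precisely when it is a $\gic$-submodule of $W$. Moreover, by \eqref{eq:Iaction} the subalgebra $\h_I$ acts on $V^\g$ exactly as it does on the irreducible $\h_I$-module $V$.

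Given these, for part (1) I would apply Lemma~\ref{lem2.7}(2) with $\mathcal L=\g$, $V_1=V^\g$ and $V_2=W^e$. Since $\h_I$ annihilates $W^e$ we have $\h_I\subseteq\mathrm{ann}_\g(S)$ for every finite $S\subseteq W^e$, and as $V^\g$ is already irreducible over $\h_I$ it is a fortiori irreducible over the larger algebra $\mathrm{ann}_\g(S)$, so the hypothesis of the lemma holds. The lemma then yields that every $\g$-submodule of $V^\g\otimes W^e$ equals $V^\g\otimes V_2'$ for a $\g$-submodule $V_2'$ of $W^e$, which by the first observation is $(W_0)^e$ for a $\gic$-submodule $W_0$ of $W$. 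Since $V^\g\neq0$, the assignment $W_0\mapsto V^\g\otimes(W_0)^e$ is an inclusion-preserving bijection onto the $\g$-submodules, which gives the irreducibility criterion.

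For the ``if'' direction of (2), an $\h_I$-isomorphism $V\to V'$ intertwines every $\elei ni$ (and the levels agree), hence also the normally ordered operators \eqref{eq:Lnaction}, so by the construction in Proposition~\ref{proprealize} it is a $\g$-isomorphism $V^\g\cong V'^\g$; combining this with $W^e\cong W'^e$ coming from a $\gic$-isomorphism $W\cong W'$ produces the desired isomorphism of tensor products.

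The genuine content is the ``only if'' direction. First, restricting an isomorphism $M=V^\g\otimes W^e\cong V'^\g\otimes W'^e=M'$ to $\h_I$ and noting that $M$ is a direct sum of copies of the irreducible $V$ while $M'$ is a direct sum of copies of $V'$ forces $V\cong V'$ as $\h_I$-modules, whence $V^\g\cong V'^\g$ and I may assume $M'=V^\g\otimes W'^e$. I would then introduce the functor $F(M)=\mathrm{Hom}_{\h_I}(V^\g,M)$, realized as the $\h_I$-invariants of the $\g$-module $\mathrm{Hom}_\C(V^\g,M)$. The main obstacle is that $\gic$ does not commute with $\h_I$, so there is no naive commuting action on this multiplicity space; the resolution is that, $\h_I$ being an ideal, its invariants form a $\g$-submodule of $\mathrm{Hom}_\C(V^\g,M)$ on which $\h_I$ acts by $0$, and therefore carry a $\gic$-module structure. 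A short computation with the maps $f_w\colon v\mapsto v\otimes w$ then identifies $F(V^\g\otimes W^e)$ with $W$ as a $\gic$-module, where surjectivity of $w\mapsto f_w$ rests on Schur's lemma $\mathrm{End}_{\h_I}(V)=\C$, valid because $V$ is irreducible and $\h_I$ has countable dimension. Applying the functor $F$ to the $\g$-isomorphism $M\cong M'$ finally yields $W\cong W'$ as $\gic$-modules, completing the argument.
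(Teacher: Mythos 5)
Your proposal is correct, and part (1) is essentially the paper's own argument: both apply Lemma~\ref{lem2.7}(2) after observing that $\h_I\subseteq\mathrm{Ann}_\g(S)$ for every finite $S\subseteq W^e$ and that $V^\g$ is irreducible already over $\h_I$. In part (2), however, your ``only if'' direction takes a genuinely different route. The paper argues with explicit elements: writing $\vf(x\otimes v)=\sum_{i=1}^t y_i\otimes w_i$, it invokes Lemma~\ref{lem2.7}(1) to produce $u_0\in\U(\h_I)$ with $u_0y_i=\dt_{i,1}y_1$, builds an $\h_I$-isomorphism $\eta_v:V\to V'$, $ux_1\mapsto uy_1$, and then, after normalizing $\eta_v=\mathrm{id}$, reads off a bijection $\vf':W\to W'$, $v\mapsto w_v$, which is checked to be $\gic$-linear by subtracting the action on the first tensor factor. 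You instead obtain $V\cong V'$ by an isotypic-decomposition argument and recover $W$ functorially as the multiplicity space $\mathrm{Hom}_{\h_I}(V^\g,-)$, which carries a $\g/\h_I\cong\gic$-action precisely because $\h_I$ is an ideal, and which $w\mapsto f_w$ identifies with $W$ via Dixmier--Schur ($\mathrm{End}_{\h_I}(V)=\C$, available since $\h_I$ has countable dimension). Both proofs rest on the same two pillars---irreducibility of $V$ over the countable-dimensional $\h_I$ and triviality of the $\h_I$-action on $W^e$---but the paper's is more elementary and self-contained (only Lemma~\ref{lem2.7} is used), while yours is more conceptual and more portable: no generators are chosen, the isomorphism $W\cong W'$ is canonical in the given isomorphism of tensor products, and the argument applies verbatim whenever an ideal acts irreducibly on one factor and trivially on the other. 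Two small details you should spell out when writing this up: surjectivity of $w\mapsto f_w$ needs, beyond Schur, the finite-support observation that an $\h_I$-map $V\to V\otimes W$ has only finitely many nonzero scalar components (evaluate at a single nonzero vector of $V$); and in the isotypic step one should compose the given isomorphism with the inclusion of one copy of $V$ and a coordinate projection onto a copy of $V'$ chosen so that the composite is nonzero, hence bijective by irreducibility of both $V$ and $V'$.
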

\begin{proof}
 (1) Note that $V^\g$ is irreducible as a $\g$-module and
  $$\h_{I}\subseteq\mathrm{Ann}_\g(S)=\{x\in\g\ \vline\ xS=0\}$$
  for any finite subset $S\subset W^e$.
  Then (1) follows from Lemma \ref{lem2.7} (2).\\
  (2) The sufficiency is clear and we prove the necessity in the following.
  Let $\vf:V^{\g}\otimes W^e\longrightarrow V'^{\g}\otimes W'^e$ be
  a $\g$-module isomorphism and fix $x\in V$.
  For any $v\in W$ write
  $$\vf(x\otimes v)=\sum_{i=1}^ty_i\otimes w_i,$$
  where $t\in\Z_+, w_i\in W'$, $y_i\in V'$, and $y_1,y_2,\dots,y_t$ are linearly independent.
  Then by Lemma \ref{lem2.7} (1), there exists $u_0\in\U(\h_I)$ such that
  $$u_0y_i=\dt_{i,1}y_1,\qquad i=1,2,\dots,t.$$
  Therefore for any $u\in\U(\h_I)$, noting $\h_IW^e=\h_IW'^e=0$, we have
  \begin{align*}
   \vf(uu_0x\otimes v)=\vf(uu_0(x\otimes v))=uu_0\vf(x\otimes v)=uu_0\sum_{i=1}^ty_i\otimes w_i
   =\sum_{i=1}^tuu_0y_i\otimes w_i=uy_1\otimes w_1.
  \end{align*}
  Denote $x_1=u_0x$ and we get
  $$\vf(ux_1\otimes v)=uy_1\otimes w_1\qquad\text{ for any }u\in\U(\h_I).$$
  Note that $V$ is irreducible as an $\h_I$-module.
  Then the linear map $\eta_v:V\longrightarrow V'$ defined by $\eta_v(ux_1)=uy_1$ for any $u\in\U(\h_I)$ is clearly an $\h_I$-module isomorphism.

  Now without loss of generality we may assume that $V=V'$ and $\eta_v$ is the identity.
  Then for any $x\in V, v\in W$ there exists $w_v\in W$ such that
  $$\vf(x\otimes v)=x\otimes w_v.$$
  Define a linear map $\vf':W\longrightarrow W'$ by $\vf'(v)=w_v$, which is clearly bijective.
  Moreover, for any $i\in I^C, n\in\Z$, we have
  \begin{align*}
   \vf(x\otimes Xv)&=\vf(X(x\otimes v))-\vf((Xx)\otimes v)=X\vf(x\otimes v)-Xx\otimes w_v\\
                   &=X(x\otimes w_v)-Xx\otimes w_v=x\otimes Xw_v,
  \end{align*}
  where $X$ represents $L_n$ or $\elei ni$.
  So $\vf'$ is a $\gic$-module isomorphism, proving (2).
\end{proof}

%%%%%%%%%%%%%%%%%%%%%%%%%%%%%%%%%%%%%%%%%%% £¤£¤£¤£¤£¤£¤£¤£¤£¤£¤£¤£¤£¤£¤£¤£¤£¤£¤£¤£¤£¤£¤£¤£¤subsection3.3
\subsection{Whittaker $\g$-modules of zero level}
The main goal of this subsection is to prove the following result.
\begin{theorem}\label{thm:zerolevel}
 Let $\varphi$ be a Whittaker function on $\g_I^{(m)}$ such that $\varphi(C_i)=0$ for all $i\in I$. Then the universal Whittaker $\g_I$-module $M_{\g_I,\varphi}$ is irreducible if and only if
 $\vfi{m-1}i\neq 0$ for all $i\in I$.
\end{theorem}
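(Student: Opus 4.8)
The plan is to prove both implications through the single mechanism of reducing an arbitrary nonzero vector to a nonzero multiple of $\vvf$, the zero-level hypothesis being exactly what makes the Heisenberg ideal act through its commutative quotient. First I would record the PBW picture: writing $\gi^-=\spanc{L_n,\elei ki\mid n\le m-1,\ k\le -1,\ i\in I}$, the module $\mgivf=\U(\gi^-)\vvf$ has a basis of ordered monomials in the lowering generators applied to $\vvf$. Since $\vf(C_i)=0$ for $i\in I$ and each $C_i$ is central, every $C_i$ acts as $0$ on $\mgivf$, so $[\elei ki,\elei l{i'}]=0$ as operators and the only surviving mixed bracket is $[L_s,\elei ki]=-(k+\tfrac ip)\elei{s+k}i$. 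The engine of the whole argument is the pair of reduction identities, valid on $\vvf$: for a factor $L_s$ with $s\le m-1$ and any $j\in I$ one has $\elei{m-1-s}j\in\gim$ and $[\elei{m-1-s}j,L_s]\vvf=(m-1-s+\tfrac jp)\vfi{m-1}j\,\vvf$, while for a factor $\elei ki$ with $k\le-1$ one has $L_{m-1-k}\in\gim$ and $[L_{m-1-k},\elei ki]\vvf=-(k+\tfrac ip)\vfi{m-1}i\,\vvf$. Thus a single lowering factor can be collapsed to a scalar multiple of $\vvf$, nonzero precisely under the hypothesis; note that the superscript is forced in the second identity, which is why the condition must hold for every $i\in I$.

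For necessity I would argue by contraposition. Suppose $\vfi{m-1}{i_0}=0$ for some $i_0\in I$. I claim $\elei{-1}{i_0}\vvf$ is a Whittaker vector of type $\vf$: for $x=L_s$ with $s\ge m$ one computes $(x-\vf(x))\elei{-1}{i_0}\vvf=[L_s,\elei{-1}{i_0}]\vvf=(1-\tfrac{i_0}p)\vf(\elei{s-1}{i_0})\vvf$, which vanishes because $\vf(\elei{s-1}{i_0})=0$ for $s-1\ge m$ and equals $\vfi{m-1}{i_0}=0$ when $s=m$; for $x=\elei nj$ with $n\ge0$ the bracket lands in $\C C_j$ and hence acts as $0$; and the central elements act by the correct scalars. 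Since $\elei{-1}{i_0}\vvf\notin\C\vvf$, the universal property of $\mgivf$ yields a non-scalar $\gi$-endomorphism; as $\gi$ and $\mgivf$ are of countable dimension, an irreducible module would have endomorphism ring $\C$, so $\mgivf$ must be reducible.

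For sufficiency, assume $\vfi{m-1}i\neq0$ for all $i\in I$ and let $0\neq W\subseteq\mgivf$ be a submodule. I would take $0\neq w\in W$ and reduce it to $\vvf$ by repeatedly applying the operators $\elei{m-1-s}j$ and $L_{m-1-k}$ from the engine above: each such operator either collapses a chosen lowering factor to a nonzero multiple of the scalar $\vfi{m-1}i$ (shortening the monomial) or, acting on the remaining factors, reindexes them in a controlled way. Concretely, one fixes a total order on PBW monomials, selects the leading monomial of $w$, and applies the reducer matching its outermost factor; the leading term maps to a nonzero shorter monomial, and one checks that the remaining terms stay strictly below it in the order, so that after finitely many steps one lands on $c\,\vvf$ with $c\neq0$. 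This gives $\vvf\in W$ and hence $W=\mgivf$.

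The hard part is the bookkeeping in this last reduction, and it is localized entirely in the generators $L_1,\dots,L_{m-1}$, which are lowering generators of $\gi^-$ yet carry positive $L_0$-weight; consequently $\mgivf$ carries no honest grading (the Whittaker scalars $\vf(\elei ki)$ with $0\le k\le m-1$, $i\in I$, live in positive weight) and collapsing such a factor can \emph{lower} the weight rather than raise it. The main technical task is therefore to design a total order — refining weight by PBW length, say — in which every reducer strictly decreases the leading monomial and no cancellation can revive it; this is exactly the phenomenon that in the purely Virasoro case forces a condition on $\vf(L_{2m-1})$ and $\vf(L_{2m})$, whereas here the extra Heisenberg reducers remove that need and replace it by the single requirement $\vfi{m-1}i\neq0$. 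I expect establishing this non-cancellation, together with the termination of the reduction, to be the only substantial step.
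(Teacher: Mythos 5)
Your necessity argument is correct and is essentially the paper's (Lemma \ref{lem:onlyifpart}): the paper concludes more simply that $\U(\gi)\elei{-1}{i_0}\vvf$ is a proper nonzero submodule by the PBW theorem, whereas you invoke Dixmier--Schur for countable-dimensional modules; both work. The genuine gap is in sufficiency, where you explicitly defer the key step (``design a total order \dots in which every reducer strictly decreases the leading monomial and no cancellation can revive it \dots the only substantial step''). That step \emph{is} the theorem, and it is not mere bookkeeping. Concretely, your single-stage reduction must cope with the possibly nonzero scalars $\vfl m,\dots,\vfl{2m}$ (and $\vf(C_0)$): when your reducer $L_{m-1-k}$ (with $k\le -1$, hence index $\ge m$) is commuted past a Virasoro factor $L_{s'}$ of a monomial, it creates factors $L_{m-1-k+s'}$ whose indices can land in $[m,2m]$ and which therefore contribute exactly those scalars upon reaching $\vvf$; these are the cross terms that, in the pure Virasoro setting of Proposition \ref{prop2.5}, genuinely obstruct irreducibility unless $(\vfl{2m-1},\vfl{2m})\neq(0,0)$. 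You express the hope that the extra Heisenberg reducers render them harmless, but you exhibit no order and prove no non-cancellation or termination statement.

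The paper circumvents both difficulties with two devices absent from your plan. First, it uses the hypothesis $\vfi{m-1}i\neq0$ a \emph{second} time: since the lower-triangular matrices $A_i$ have diagonal entries $\vfi{m-1}i$, they are invertible, which produces an element $\al\in\h_I$ such that twisting by the automorphism $\exp(\mathrm{ad}\,\al)$ of $\gi$ replaces $\vf$ by an equivalent Whittaker function $\vf'$ with $\vf'(L_k)=0$ for all $k\ge m$; this normalization kills all the problematic cross terms above before any reduction begins. Second, with $\vf'$ normalized, the induction is performed in two stages: the monomial-order reduction you envisage is carried out only for $M_0=\Ind_{\gim}^{\gizero}\C\vvf$ (Lemma \ref{lem:conirr}), where the lowering generators are the finitely many $L_0,\dots,L_{m-1}$ and a lexicographic order on $\N^m$ suffices; the passage from $\gizero$ to $\gi$ --- i.e.\ the infinitely many lowering generators $L_n,\elei ni$ with $n<0$, precisely where your bookkeeping would explode --- is handled not by a monomial order at all, but by a separate irreducibility criterion for induced modules (Lemma \ref{propreszerolevelgimod}, quoted from \cite[Theorem 3.4]{GX}). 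So your proposal has the right reducers and a complete necessity half, but the missing normalization trick and the unproved ordering/cancellation analysis constitute the actual substance of the proof.
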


By taking $I=\{1,2,\dots,p-1\}$ in Theorem \ref{thm:zerolevel}, we obtain:

\begin{corollary}\label{corzerolevelgmod}
 Let $\varphi$ be a Whittaker function of $\g^{(m)}$ such that $\varphi(C_i)=0$ for all $1\le i\le p-1$. Then the universal Whittaker $\g$-module $M_{\g,\varphi}$ is irreducible if and only if
 $\vfi{m-1}i\neq 0$ for all $1\leq i\leq p-1$.
\end{corollary}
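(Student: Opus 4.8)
Since $\{1,\dots,p-1\}$ is symmetrical, Corollary \ref{corzerolevelgmod} is exactly the instance $I=\{1,\dots,p-1\}$ of Theorem \ref{thm:zerolevel}, so the plan is to prove the theorem and read off the corollary. Throughout I would exploit that at level zero the ideal $\hi$ acts on $\mgivf$ through its abelianization: the Heisenberg generators pairwise commute as operators, and the only nontrivial interaction relevant to a reduction is the bracket $[L_s,\elei ni]=-(n+\tfrac ip)\elei{s+n}i$.

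For the necessity, suppose $\vfi{m-1}{i_0}=0$ for some $i_0\in I$; I would produce a Whittaker vector of type $\varphi$ independent of $\vvf$, which forces reducibility. The natural candidate is $w_0=\elei{-1}{i_0}\vvf$. For $s\ge m$ one has $[L_s,\elei{-1}{i_0}]=(1-\tfrac{i_0}p)\elei{s-1}{i_0}$, and $\elei{s-1}{i_0}\vvf=\vf(\elei{s-1}{i_0})\vvf=0$ because $s-1\ge m-1$ (using $\vfi{m-1}{i_0}=0$ and the vanishing of $\varphi$ on $\elei{\ge m}{i_0}$, the analogue of \eqref{eq3.1}); while $\elei nj$ with $n\ge0,\,j\in I$ commutes with $\elei{-1}{i_0}$ up to a central term, which acts as $0$. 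Hence $x\,w_0=\varphi(x)w_0$ for all $x\in\gim$. By Frobenius reciprocity the Whittaker vectors of type $\varphi$ are identified with $\mathrm{End}_{\gi}(\mgivf)$; if $\mgivf$ were irreducible a standard Schur/Dixmier argument (the module has countable dimension) would give $\mathrm{End}_{\gi}(\mgivf)=\C$, so this space would be $\C\vvf$, contradicting the independence of $\vvf$ and $w_0$.

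For the sufficiency, assume $\vfi{m-1}i\neq0$ for every $i\in I$ and prove $\vvf\in\U(\gi)w$ for each nonzero $w$. I would induct on the length $r$ (PBW degree) of $w$, computed in $\mathrm{gr}\,\mgivf$, which is the commutative polynomial algebra on the creation operators $\{L_k:k\le m-1\}\cup\{\elei ni:n\le-1,\,i\in I\}$, commutative precisely because $\hi$ is abelian at level zero. The engine is that each creation operator can be converted into the nonzero scalar $\vfi{m-1}i$: for $a\ge1$, $[L_{m-1+a},\elei{-a}i]=(a-\tfrac ip)\elei{m-1}i$ with $\elei{m-1}i\vvf=\vfi{m-1}i\,\vvf$, and dually $[\elei{m-1+b}{j},L_{-b}]$ is a nonzero multiple of $\elei{m-1}{j}$ for every $j\in I$. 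Given $w$ of length $r\ge1$, let $X^\ast$ be a factor of minimal $\Z$-degree occurring in the top-length component and let $y$ be the associated stripping operator, so that $\deg[y,X]\ge\deg y+\deg X^\ast=m-1\ge0$ for every factor $X$ present; then $[y,X]$ always lands in $\gim$, the induced derivation on $\mathrm{gr}$ annihilates the entire top-length component, and therefore $(y-\varphi(y))w$ has length strictly less than $r$. Once this vector is shown to be nonzero, the induction hypothesis applies and descends to $\C\vvf$.

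The hard part will be exactly this nonvanishing, i.e.\ controlling the lower-order terms. The difficulty is that $\mgivf$ is only filtered, not graded, by $\Z$-degree: since $\varphi$ is in general nonzero on the positive-degree elements $L_m,\dots,L_{2m}$ and $\elei 1i,\dots,\elei{m-1}i$, the substitution $\elei{m-1}i\vvf=\vfi{m-1}i\vvf$ that performs a strip lowers the degree by $m-1$ and so breaks the grading as soon as $m\ge2$, spawning additional monomials that could cancel the stripped leading term. The most delicate configuration is when the top-length component involves only the finitely many non-negative-degree Virasoro creation operators $L_0,\dots,L_{m-1}$, since stripping $L_k$ forces the operator $\elei{m-1-k}{j}$, on which $\varphi$ need not vanish (the endpoint $k=m-1$ is harmless, $\elei 0{j}$ having degree $0$). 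I expect the technical heart of the proof to be a choice of total order on the PBW monomials, refining length by a degree-lexicographic order, with respect to which every stripping operator acts triangularly, so that the image of the leading monomial cannot be cancelled; the case $m=1$ is already transparent since the grading is then preserved. Once this is in place the length induction terminates and irreducibility follows.
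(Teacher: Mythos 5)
Your reduction of the corollary to Theorem \ref{thm:zerolevel} with $I=\{1,\dots,p-1\}$ is exactly the paper's, and your necessity argument is essentially the paper's Lemma \ref{lem:onlyifpart}: you use the same Whittaker vector $\elei{-1}{i_0}\vvf$, merely wrapping it in a Frobenius-reciprocity/Dixmier argument where the paper observes directly (via PBW) that this vector generates a proper submodule. That half is sound.

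The sufficiency half contains a genuine gap, which you flag but do not close, and one intermediate claim is false as stated. Concretely: if the minimal-degree factor is $X^*=\elei{-a}{i}$, so $y=L_{m-1+a}$, the assertion that $[y,X]\in\g^{(m)}$ for every factor $X$ present fails whenever the monomial also contains $L_{-a}$ (a factor of the \emph{same} minimal degree), since $[L_{m-1+a},L_{-a}]=(m-1+2a)L_{m-1}$ and $L_{m-1}\notin\g^{(m)}$ is again a creation operator. For instance, $\left(L_m-\vf(L_m)\right)\elei{-1}{i}L_{-1}\vvf=(m+1)\elei{-1}{i}L_{m-1}\vvf+(\text{terms of length}\le 1)$, so the PBW length does not drop. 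Second, and more fundamentally, the nonvanishing of the stripped vector --- your ``hard part'' --- is precisely what a proof must establish, and ``I expect a suitable total order exists'' is not an argument. The paper never confronts these cancellations; it dissolves them by a device absent from your proposal: using invertibility of the lower-triangular matrices $A_i$ (diagonal entries $\vfi{m-1}i\ne0$) it constructs $\al\in\hi$ from the elements $\elei ki$, $-m\le k\le 0$, such that twisting by the automorphism $\exp(\mathrm{ad}\,\al)$ replaces $\vf$ by a Whittaker function $\vf'$ with $\vf'(L_k)=0$ for all $k\ge m$, while leaving the values on the Heisenberg elements unchanged. After this reduction the induction proceeds in two stages $\gim\subset\gizero\subset\gi$: the inner stage (Lemma \ref{lem:conirr}) strips only with Heisenberg operators $\elei{m-q-1}{i}$, whose brackets with the $L_k$'s are always $I$'s and never $L$'s (and whose mutual brackets are central, acting as zero at level zero), so the failure above cannot occur and the triangularity with respect to $\prec$ is elementary; the outer stage is Lemma \ref{propreszerolevelgimod}, i.e.\ the cited result [GX, Theorem 3.4], whose hypothesis $L_nM_0=\elei njM_0=0$ for $n>m-1$ is exactly what the automorphism twist buys. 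Without the automorphism reduction (or an equivalent mechanism, or the citation of [GX]), your single global induction must control exactly the cancellations you describe, and the proposal provides no way to do so.
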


In the rest of this subsection, let $\varphi$ be as in Theorem \ref{thm:zerolevel}.

\begin{lemma}\label{lem:onlyifpart}
  If   $\vfi{m-1}i= 0$ for some  $1\leq i\leq p-1$, then  $M_{\g,\varphi}$ is reducible.
\end{lemma}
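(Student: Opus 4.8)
The plan is to establish reducibility by exhibiting $M_{\gi,\vf}$ as having a proper nonzero quotient, obtained by enlarging the subalgebra from which it is induced. Fix an index $i\in I$ with $\vfi{m-1}i=0$, and set $w=\elei{-1}i\vvf$; this is a nonzero vector of $M_{\gi,\vf}$, being a nontrivial PBW monomial. First I would check that
\[
\mathfrak{k}:=\g_I^{(m)}+\C\,\elei{-1}i
\]
is a subalgebra of $\gi$. The only brackets to verify are $[\elei{-1}i,x]$ for $x\in\g_I^{(m)}$, and by \eqref{eq2.3} and \eqref{eq2.2} these fall back into $\g_I^{(m)}$: indeed $[\elei{-1}i,L_{m+k}]=-(1-\frac ip)\elei{m+k-1}i$ with $m+k-1\ge0$ for every $k\in\N$, and $[\elei{-1}i,\elei kj]$ is a scalar multiple of the central element $C_i\in\g_I^{(m)}$.

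The crux is then to extend $\vf$ to a Lie algebra homomorphism $\vf'\colon\mathfrak{k}\to\C$ by declaring $\vf'(\elei{-1}i)=0$; for this I must verify that $\vf'$ annihilates $[\mathfrak{k},\mathfrak{k}]$. On $[\g_I^{(m)},\g_I^{(m)}]$ this holds because $\vf$ is already a Whittaker function, so only the new brackets computed above are at issue. Applying $\vf'$ to them gives $\vf'([\elei{-1}i,L_{m+k}])=-(1-\frac ip)\vfi{m+k-1}i$, which vanishes for $k\ge1$ (as in \eqref{eq3.1}) and vanishes for $k=0$ \emph{precisely} because $\vfi{m-1}i=0$ by hypothesis; the central brackets vanish since $\vf(C_i)=0$. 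Thus the hypothesis $\vfi{m-1}i=0$ is exactly the compatibility condition making $\vf'$ well defined, and then $\C v'$ is a one-dimensional $\mathfrak{k}$-module via $\vf'$.

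Finally I would induce: $M'=\Ind_{\mathfrak{k}}^{\gi}\C v'$ is nonzero, and since $\g_I^{(m)}\subseteq\mathfrak{k}$ acts on $v'$ through $\vf$, it is a Whittaker $\gi$-module of type $\vf$ whose generator satisfies $\elei{-1}i v'=\vf'(\elei{-1}i)v'=0$. By the universal property recalled in Section~2 there is a $\gi$-module epimorphism $\pi\colon M_{\gi,\vf}\to M'$ with $\pi(\vvf)=v'$, so $\pi(w)=\elei{-1}i v'=0$. As $M'\neq0$, the kernel $\ker\pi$ is a proper submodule, and it is nonzero because $0\neq w\in\ker\pi$; hence $M_{\gi,\vf}$ is reducible. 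I expect the only delicate step to be the bracket bookkeeping in the second paragraph: once the bracket $[\elei{-1}i,L_m]$ is isolated, the role of the hypothesis is immediate, and the rest of the argument is formal.
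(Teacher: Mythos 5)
Your proof is correct, and it hinges on exactly the same computation as the paper's, though it concludes by a dual mechanism. The paper's proof is three lines: since $\elei{m+k-1}i\vvf=0$ for all $k\geq0$ (for $k\geq1$ this is automatic from the Whittaker condition, for $k=0$ it is the hypothesis $\vfi{m-1}i=0$), the vector $\elei{-1}i\vvf$ is again a Whittaker vector of type $\vf$, and the submodule it generates is proper and nonzero. Your verification that $\vf$ extends to a homomorphism $\vf'$ on $\mathfrak{k}=\g_I^{(m)}+\C\,\elei{-1}i$ is literally the same bracket bookkeeping — $[\elei{-1}i,\g_I^{(m)}]$ lands in $\sum_{k\in\N}\C\,\elei{m+k-1}i+\C\,C_i$, all killed by $\vf$ thanks to the hypothesis, the vanishing $\vf\bigl(\elei{m+j-1}i\bigr)=0$ for $j\in\Z_+$, and the standing assumption $\vf(C_i)=0$ of Theorem \ref{thm:zerolevel} (which the paper's proof also uses tacitly, for the bracket $[\elei0{p-i},\elei{-1}i]\in\C\,C_i$). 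Where you genuinely diverge is in how reducibility is extracted: the paper exhibits a proper submodule directly, leaving implicit the PBW argument that $\vvf\notin\U(\gi)\elei{-1}i\vvf$; you instead build the nonzero quotient $M'=\Ind_{\mathfrak{k}}^{\gi}\C v'$ and take the kernel of the canonical epimorphism, which makes properness automatic (a surjection onto a nonzero module has proper kernel) at the cost of checking that $\mathfrak{k}$ is a subalgebra and $\vf'$ a homomorphism. The two constructions identify the same object: your $\ker\pi$ is precisely $\U(\gi)\elei{-1}i\vvf$, the paper's submodule. Both arguments still need the PBW theorem once, to see that $\elei{-1}i\vvf\neq0$.
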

\begin{proof}
 Pick an $i\in I$ such that  $\vfi{m-1}i=0$.
Since $\elei{m+k-1}i\vvf=0$ for all $k\geq 0$,
we see that $\elei{-1}i\vvf$ is a Whittaker vector of type $\varphi$,
which generates a proper submodule of $M_{\g,\varphi}$.
\end{proof}

By Lemma \ref{lem:onlyifpart}, it remains to prove the ``if" part of Theorem \ref{thm:zerolevel}.
So from now on we  assume that  $\vfi{m-1}i\neq 0$ for all  $1\leq i\leq p-1$.
Set
\[\gizero=\spanc{L_{n},\elei ni,C_0,C_i\ \mid\ n\in\N, i\in I},\]  which is a subalgebra of $\g_I$ containing $\gim$.

\begin{lemma}\label{lem:conirr}
    If $\varphi(L_k)=0$ for all $k\ge m$, then the induced $\g_I^{(0)}$-module
    \begin{equation}\label{eq:defM0}
          M_0=\Ind_{\gim}^{\gizero}\,\C\vvf
    \end{equation}
    is irreducible.
\end{lemma}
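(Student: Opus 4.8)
The plan is to run a leading–term argument on the associated graded of a natural filtration of $M_0$, using the Heisenberg generators $\elei ni$ $(i\in I)$ to manufacture the partial–derivative operators $\p/\p L_j$. First I set up the PBW picture. As vector spaces $\gizero=\gim\oplus\spanc{L_0,\dots,L_{m-1}}$, so by the PBW theorem $M_0$ has basis $\{L_0^{a_0}L_1^{a_1}\cdots L_{m-1}^{a_{m-1}}\vvf\mid (a_0,\dots,a_{m-1})\in\N^m\}$. Filter $M_0$ by the number of $L$–factors, $F_s=\spanc{L_0^{a_0}\cdots L_{m-1}^{a_{m-1}}\vvf\mid a_0+\cdots+a_{m-1}\le s}$, so that $\mathrm{gr}\,M_0\cong\C[L_0,\dots,L_{m-1}]$ as graded vector spaces. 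I record two facts. (i) For $n\ge m$ the operator $\elei ni$ annihilates $M_0$: since $\vfi ni=0$ gives $\elei ni\vvf=0$, and $[\elei ni,L_k]=(n+\frac ip)\elei{n+k}i$ only raises the lower index past $m$, an induction on the number of $L$–factors kills every basis vector. (ii) Setting $X_n^{(i)}=\elei ni-\vfi ni$ for $i\in I$, $n\in\N$, one has $X_n^{(i)}\vvf=0$.

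Next I compute the symbol of $X_n^{(i)}$ on $\mathrm{gr}\,M_0$. Commuting $X_n^{(i)}$ to the right through a monomial, each step replaces a factor $L_k$ via $[X_n^{(i)},L_k]=(n+\frac ip)\elei{n+k}i$; writing $\elei{n+k}i=\vfi{n+k}i+X_{n+k}^{(i)}$ (and invoking (i) when $n+k\ge m$), the scalar part $(n+\frac ip)\vfi{n+k}i$ deletes that factor, lowering the filtration by exactly one, whereas the operator part $X_{n+k}^{(i)}$ and the iterated commutators contribute only to strictly lower filtration. Hence $X_n^{(i)}$ maps $F_s$ into $F_{s-1}$ and acts on $\mathrm{gr}\,M_0$ as the constant–coefficient operator
\[
  \sigma\big(X_n^{(i)}\big)=\Big(n+\tfrac ip\Big)\sum_{k\ge 0}\vfi{n+k}i\,\frac{\p}{\p L_k}.
\]
Now fix one $i\in I$ (recall $I\ne\emptyset$). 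For $j=0,1,\dots,m-1$ only the indices $0\le k\le j$ survive in $\sigma(X_{m-1-j}^{(i)})$, and the coefficient of $\p/\p L_j$ is $(m-1-j+\frac ip)\vfi{m-1}i\ne 0$ by hypothesis. Thus $\sigma(X_{m-1}^{(i)}),\dots,\sigma(X_0^{(i)})$ form a triangular system in $\p/\p L_0,\dots,\p/\p L_{m-1}$ with nonzero diagonal; inverting it produces elements $D_0,\dots,D_{m-1}\in\U(\gizero)$ with $\sigma(D_j)=\p/\p L_j$.

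Finally I prove irreducibility. Let $0\ne w\in M_0$, let $d$ be its filtration degree, and let $P\in\C[L_0,\dots,L_{m-1}]$ be its image in $\mathrm{gr}_d M_0$, a nonzero homogeneous polynomial of degree $d$. Choose a multi–index $\mathbf a=(a_0,\dots,a_{m-1})$ with $a_0+\cdots+a_{m-1}=d$ whose monomial occurs in $P$ with coefficient $c_{\mathbf a}\ne 0$, and set $u=D_0^{a_0}\cdots D_{m-1}^{a_{m-1}}\in\U(\gizero)$. Then $u$ lowers the filtration by $d$, so it annihilates the part of $w$ in $F_{d-1}$, while $\sigma(u)=\frac{\p^{a_0}}{\p L_0^{a_0}}\cdots\frac{\p^{a_{m-1}}}{\p L_{m-1}^{a_{m-1}}}$ sends $P$ to the constant $a_0!\cdots a_{m-1}!\,c_{\mathbf a}$. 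Since $F_0=\C\vvf$, this gives $u\,w=a_0!\cdots a_{m-1}!\,c_{\mathbf a}\,\vvf$, a nonzero multiple of $\vvf$. As $\vvf$ generates $M_0$, the submodule generated by $w$ is all of $M_0$, so $M_0$ is irreducible.

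The main obstacle is the bookkeeping in the symbol computation: one must verify that whenever a product of the $X_n^{(i)}$ is commuted through the $L_k$'s, every Heisenberg operator produced either is annihilated via (i) or contributes only its scalar value, so that on $\mathrm{gr}\,M_0$ the action is exactly the first–order operator displayed above and all corrections fall into lower filtration. Once this is pinned down, the triangularity forced by $\vfi{m-1}i\ne0$ finishes the proof. I note that the argument never touches $L_k\vvf$ for $k\ge m$, so the hypothesis $\vf(L_k)=0$ $(k\ge m)$ plays no role in this particular step; it is rather the normalization under which $M_0$ is later assembled into $\mgivf$.
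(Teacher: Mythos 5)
Your proof is correct, and it runs on the same engine as the paper's: the operators $X_n^{(i)}=\elei ni-\vfi ni$, the vanishing $\vfi ni=0$ for $n\ge m$ (automatic from $\vf$ being a Lie algebra homomorphism on $\g_I^{(m)}$), and the nonvanishing $\vfi{m-1}i\neq0$. The packaging, however, is genuinely different. The paper endows the PBW basis $\{L^{\bf i}\vvf\}$ with a lexicographic well-order, takes a nonzero element of minimal degree in a submodule, and shows that a single application of $\elei{m-q-1}i-\vfi{m-q-1}i$ (with $q$ the first nonzero slot of the leading exponent) strictly lowers the leading monomial by $\epsilon_q$ while remaining nonzero, a contradiction that forces the minimal element into $\C\vvf$. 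You instead filter by total degree, identify $\mathrm{gr}\,M_0$ with $\C[L_0,\dots,L_{m-1}]$, compute the symbols $\sigma(X_n^{(i)})$ as first-order constant-coefficient operators, and invert a triangular system to manufacture $D_j\in\U(\gizero)$ with symbol $\p/\p L_j$; a suitable product of the $D_j$ then sends any nonzero vector directly onto a nonzero multiple of $\vvf$. Your route is constructive (no minimality or contradiction needed) and isolates cleanly where each hypothesis enters; the paper's route avoids the filtration formalism — in your version, the symbol-composition step and the fact that multiplication by $L_k$ raises filtration degree by at most one do require justification, which is cleanest if one defines $F_s$ as the image $\U_s(\gizero)\vvf$ of the PBW filtration rather than as a span of monomials (the two coincide, but that coincidence is itself the straightening argument). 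Your closing observation is also correct and consistent with the paper: the hypothesis $\vf(L_k)=0$ for $k\ge m$ is not used in the paper's own proof of this lemma either; it is needed only afterwards, in Lemma \ref{lem:globalconirr}, to guarantee $L_nM_0=0$ for $n>m-1$ so that Lemma \ref{propreszerolevelgimod} applies.
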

\begin{proof}
For ${\bf{i}}=(i_0,i_1,\dots,i_{m-1})\in \N^m$, write $L^{\bf i}=L_{m-1}^{i_m-1}\cdots L_1^{i_1}L_0^{i_0}$.
Define a total order ``$\prec$" on $\N^m$ by
 $${\bf i} \prec{\bf j}\iff \text{ there exists } 0\leq k\leq m-1 \text{ such that }\(j_s=i_s, 0\leq s<k\) \text{ and }i_k<j_k.$$
For any $w\in M_0$, write $\supp (w)$ for the finite subset of $\N^m$ such that
 $$w=\sum_{{\bf i}\in \supp (w)}a_{\bf i}L^{\bf i}\vvf\quad\text{for some (unique)}\ a_{\bf i}\in \C^\times.$$
Additionally, write  $\deg(w)$ for the maximal element in $\supp(w)$ with respect to $\prec$, called the degree of $w$.

Assume now that $V$ is a nonzero $\g_I^{(0)}$-submodule of $M_0$ and $0\neq w\in V$ with minimal degree. Suppose $w\notin \C\vvf$, write $\deg(w)={\bf j}=(j_0,j_1,\dots,j_{m-1})$, and  $q=\min \{0\le s\le m-1\mid j_s\neq 0\}$.
From the equality
 $$\left(\elei {m-q-1}i-\vf\left(\elei {m-q-1}i\right)\right)w
   =\sum_{\bf k\in\N^m}a_{\bf k}\left[\elei {m-q-1}i,L^{\bf k}\right]\vvf,$$
 it is straightforward to check that
 $$\deg\left(\left(\elei {m-q-1}i-\vf\left(\elei {m-q-1}i\right)\right)w\right)={\bf j}-{\bf\epsilon}_q,$$
 where ${\bf\epsilon}_q$ is the element in $\N^m$ whose $(q+1)$-position (from right) is 1 and 0 elsewhere.
 We obtain a vector in $V$ of degree less than $\deg(w)$, contradicting to the choice of $w$.
 This implies that $w\in \C\vvf$, and so   $V=M_0$ and $M_0$ is irreducible.
\end{proof}

\begin{lemma}\label{propreszerolevelgimod}
Let $V$ be an irreducible $\g_I^{(0)}$-module such that $C_i$ acts trivially for all $i\in I$. Suppose that there is a non-negative integer $k$ satisfying the following two conditions\\
 (1) all $\elei {k}j, \ j\in I$, act injectively on $V$;\\
 (2) $L_nV=\elei {n}jV=0$ for all $n>k$ and $j\in I$.\\
 Then the induced $\g_I$-module $\mathrm{Ind}_{\g_I^{(0)}}^{\g_I}V
  $ is also irreducible.
\end{lemma}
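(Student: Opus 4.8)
The plan is to prove irreducibility by showing that every nonzero $\gi$-submodule $W$ of $M:=\Ind_{\gizero}^{\gi}V$ meets $1\otimes V$ nontrivially. First I would use the triangular decomposition $\gi=\gi^{-}\oplus\gizero$, where $\gi^{-}=\bigoplus_{n<0}(\gi)_{n}=\spanc{L_{-n},\elei{-n}{i}\mid n\in\Z_{+},\,i\in I}$, together with the PBW theorem to identify $M\cong\U(\gi^{-})\otimes V$ as vector spaces, so that $1\otimes V$ is a $\gizero$-submodule isomorphic to the irreducible module $V$. Since $\U(\gi)(1\otimes V)=M$, it is enough to show $W\cap(1\otimes V)\neq0$: this forces $1\otimes V\subseteq W$ and hence $W=M$.

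Next I would set up a total-degree filtration. For a PBW monomial $u=\prod_n L_{-n}^{a_n}\prod_{n,i}(\elei{-n}{i})^{b_{n,i}}$ in $\U(\gi^{-})$ put $\deg u=\sum_n n\,a_n+\sum_{n,i}n\,b_{n,i}\in\N$, and let $M_{\le d}$ be the span of all $u\otimes v$ with $\deg u\le d$, so $M_{\le0}=1\otimes V$. The crucial estimates are: (Claim~0) every $x\in(\gi)_s$ with $s\ge0$ satisfies $x\,M_{\le d}\subseteq M_{\le d}$; and (Key Lemma) every $x\in(\gi)_s$ with $s>k$ satisfies $x\,M_{\le d}\subseteq M_{\le d-1}$. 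Both would be proved by a double induction on the number of factors of $u$, writing $x(y_1\cdots y_r\otimes v)=[x,y_1]\,(y_2\cdots y_r\otimes v)+y_1\,x(y_2\cdots y_r\otimes v)$ and using that $[x,y_1]$ is a single root vector of degree $s-n_1$. In the Key Lemma the base case is where condition~(2) enters: for $s>k$ the ``pass-through'' term $u\otimes(xv)$ vanishes because $L_sV=\elei sjV=0$, so the degree must strictly drop.

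With the filtration in hand I would choose a nonzero $w\in W$ of minimal total degree $d$ and claim $d=0$. If not, consider the top-degree part of $w$ and its largest monomial $u_{*}$ (for a fixed monomial order refining total degree), with nonzero coefficient $v_{*}\in V$, and let $N\ge1$ be the largest index occurring in $u_{*}$. I would then apply a probe of degree $N+k$ (hence $>k$) adapted to a top factor of $u_{*}$: use $L_{N+k}$ to act on a factor $\elei{-N}{i}$ via $[L_{N+k},\elei{-N}{i}]=(N-\tfrac ip)\elei{k}{i}$, or use $\elei{N+k}{j}$ (for any $j\in I$) to act on a factor $L_{-N}$ via $[\elei{N+k}{j},L_{-N}]=(N+k+\tfrac jp)\elei{k}{j}$. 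In both cases the bracket replaces the targeted factor by the degree-$k$ element $\elei{k}{i}$, respectively $\elei{k}{j}$, lying in $\gizero$, whose leading contribution on normal-ordering is this element acting on $v_{*}$; since the structure constant $N-\tfrac ip$, respectively $N+k+\tfrac jp$, is nonzero and $\elei{k}{i}$ (resp. $\elei{k}{j}$) acts injectively on $V$ by~(1), this produces a nonzero leading term. By the Key Lemma the vector $xw$ lies in $M_{\le d-1}$, and it is nonzero, contradicting minimality. Hence $d=0$, so $w\in1\otimes V$ and $W\cap(1\otimes V)\neq0$, establishing irreducibility.

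The main obstacle is exactly the non-cancellation claim in the previous step. A single probe $x$ brackets not only with the targeted factor but with every factor of every monomial of $w$, producing a cascade of subordinate terms of strictly smaller total degree: for the smaller-index factors the resulting root vectors have degree $>k$ and are annihilated on $V$ by~(2), while the $\elei{}{}$--$\elei{}{}$ brackets are central and act as $0$ since each $C_i$ acts trivially. The real work is to fix a monomial order on $\U(\gi^{-})$ compatible with this probe action, to verify that the image of the distinguished monomial $u_{*}$ has a well-defined leading term, and to check that neither the images of the smaller monomials nor the subordinate commutator terms reach it; the injectivity in~(1) together with the nonvanishing of the structure constants is what certifies that this leading term survives. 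Once this bookkeeping is arranged the contradiction closes and the theorem follows.
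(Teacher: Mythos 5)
The paper itself contains no self-contained proof of this lemma: it simply cites \cite[Theorem 3.4]{GX} and asserts that the argument there extends to any symmetrical $I$. So your attempt is necessarily an independent reconstruction, and its scaffolding is fine: the PBW identification $M\cong\U(\gi^-)\otimes V$, the total-degree filtration, your Claim~0 and your Key Lemma are all correct and are proved exactly by the double induction you indicate. The fatal problem is the step you defer as ``bookkeeping'': the survival of a nonzero leading term. This is not bookkeeping, and as formulated it is false, because several \emph{distinct} top-degree monomials contribute to the \emph{same} target coefficient, and injectivity of each individual operator $\elei{k}{i}$ says nothing about a sum of the form $\sum_i c_i\elei{k}{i}v_i + c_0 L_k v_0$. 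Concretely, suppose the top-degree part of $w$ is $\sum_{i\in I}c_i\,\elei{-1}{i}\otimes v_i$ (so $d=1$, $N=1$, and $u_*=\elei{-1}{i_0}$ for whichever $i_0$ your order prefers). For $n>k$ one has $\elei{n}{j}w=0$, since $[\elei{n}{j},\elei{-1}{i}]$ vanishes or is central (hence acts as $0$ at level zero) and $\elei{n}{j}v_i=0$ by (2); also $L_nw=0$ for $n>k+1$, while
\[
L_{k+1}w \;=\; 1\otimes\sum_{i\in I}c_i\,\tfrac{p-i}{p}\,\elei{k}{i}v_i .
\]
Every monomial $\elei{-1}{i}$, $i\neq i_0$ --- same degree, same length as $u_*$, differing only in the superscript --- lands on the same target monomial $1$. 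Hence if the $v_i$ satisfy the single linear relation $\sum_{i}c_i(p-i)\elei{k}{i}v_i=0$, which hypotheses (1)--(2) do not forbid once $|I|\ge 2$, then \emph{every} element of $\gi$ of degree $>k$ annihilates $w$; no probe and no refinement of the monomial order can produce the nonzero element of $M_{\le d-1}$ that your contradiction with minimality requires.

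Worse, this configuration is not hypothetical: it occurs in the very module to which the paper applies the lemma. Take $V=M_0$ from Lemmas \ref{lem:conirr}--\ref{lem:globalconirr} with $m=1$ (so $k=0$) and $p\ge 3$. Then $\elei{0}{i}\vvf=\vfi{0}{i}\vvf$ with $\vfi{0}{i}\neq0$, and choosing $(c_i)\neq 0$ with $\sum_i c_i(p-i)\vfi{0}{i}=0$, a direct check of the brackets shows that $w=\sum_i c_i\,\elei{-1}{i}\vvf$ is annihilated by $L_n$ ($n\ge 1$) and is an eigenvector for all $\elei{n}{j}$ ($n\ge 0$), i.e.\ $w$ is a Whittaker vector of type $\vf$ not proportional to $\vvf$; consequently $\U(\gi)w=\U(\gi^-\oplus\C L_0)w$ is supported on PBW monomials of strictly positive degree and never meets $1\otimes V$. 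So for $|I|\ge2$ the non-cancellation your proof needs actually fails under the stated hypotheses --- indeed this example appears to contradict the statement of the lemma itself, which indicates that any correct argument (in particular the one cited from \cite{GX}) must rest on more than the injectivity of the individual $\elei{k}{i}$, and that the hypotheses deserve scrutiny. When $|I|=1$ (the mirror Heisenberg--Virasoro case $p=2$) no cross-superscript cancellation is possible and your scheme is essentially the standard one from that literature; the case $|I|\ge 2$ is exactly where it breaks.
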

\begin{proof}
    When $I=\{1,2,\dots,p-1\}$ (and so $\g_I=\g$), the assertion is proved in \cite[Theorem 3.4]{GX}.
   However, the proof given therein is also valid for any symmetrical subset $I$ of $\{1,2,\dots,p-1\}$, and we omit the details.
\end{proof}

\begin{lemma}\label{lem:globalconirr}
 If $\varphi(L_k)=0$ for all $k\ge m$,  then the $\g_I$-module $M_{\g_I,\varphi}$ is irreducible.
\end{lemma}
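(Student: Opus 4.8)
The plan is to build $M_{\gi,\varphi}$ by induction in two stages and then combine the two preceding lemmas. Since $\gim\subseteq\gizero\subseteq\gi$, transitivity of induction lets me write
\[
  M_{\gi,\varphi}=\Ind_{\gim}^{\gi}\C\vvf
  =\Ind_{\gizero}^{\gi}\(\Ind_{\gim}^{\gizero}\C\vvf\)
  =\Ind_{\gizero}^{\gi}M_0 ,
\]
with $M_0$ the module of \eqref{eq:defM0}. Lemma \ref{lem:conirr} already tells me that $M_0$ is irreducible over $\gizero$, and every $C_i$ ($i\in I$) acts on $M_0$ by $\varphi(C_i)=0$. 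So the whole problem reduces to checking that $M_0$ meets the two hypotheses of Lemma \ref{propreszerolevelgimod} with $k=m-1$; once that is done, that lemma immediately yields the irreducibility of $\Ind_{\gizero}^{\gi}M_0=M_{\gi,\varphi}$.

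To verify hypothesis (2) I would work in the PBW basis $\{L^{\bf i}\vvf\}$ of $M_0$ from Lemma \ref{lem:conirr} and prove that $L_nM_0=\elei njM_0=0$ for all $n\ge m$ and $j\in I$. The starting point is that $L_n\vvf=\varphi(L_n)\vvf=0$ (our standing hypothesis) and $\elei nj\vvf=\vfi nj\,\vvf=0$ for $n\ge m$ (forced by \eqref{eq3.1}). I would then induct on the number of $L$-factors of a basis vector: writing it as $L_aw'$ with $0\le a\le m-1$, relations \eqref{eq2.1} and \eqref{eq2.3} give
\[
  L_n(L_aw')=L_a(L_nw')+(n-a)L_{n+a}w',\qquad
  \elei nj(L_aw')=L_a\(\elei njw'\)+\(n+\tfrac jp\)\elei{n+a}jw',
\]
with no central contribution (the first because $n+a>0$); all operators reappearing on the right carry index $\ge m$, so the inductive hypothesis annihilates every term.

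Hypothesis (1), the injectivity of each $\elei{m-1}j$ with $j\in I$, is where the real content lies, and I expect it to be the main obstacle. Writing $c_j=\vfi{m-1}j\ne0$, I would use the same commutation relations together with hypothesis (2) to show that, as operators on $M_0$, $\elei{m-1}j$ commutes with $L_1,\dots,L_{m-1}$ (each commutator produces an $\elei{m-1+a}j$ of index $\ge m$, which kills $M_0$), while against $L_0$ one has the shift $\elei{m-1}jL_0=(L_0+m-1+\tfrac jp)\elei{m-1}j$. These identities show that $\elei{m-1}j$ sends $L^{\bf i}\vvf$ to $c_jL^{\bf i}\vvf$ plus terms of strictly smaller $L_0$-degree $i_0$, so that $\elei{m-1}j=c_j(\mathrm{id}+N_j)$ with $N_j$ strictly lowering the $L_0$-degree and hence locally nilpotent. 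As $c_j\ne0$, the factor $\mathrm{id}+N_j$ is injective, and therefore so is $\elei{m-1}j$. The delicate point is precisely that the ``boundary'' operator $\elei{m-1}j$ acts invertibly rather than trivially, which hinges on the nonvanishing coefficient $m-1+\tfrac jp$ and on already having hypothesis (2) in hand to discard the higher-index commutator terms.
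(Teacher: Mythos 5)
Your proof is correct and takes essentially the same route as the paper: transitivity of induction to write $M_{\gi,\varphi}\cong\Ind_{\gizero}^{\gi}M_0$, irreducibility of $M_0$ from Lemma \ref{lem:conirr}, and then Lemma \ref{propreszerolevelgimod} applied with $k=m-1$. The only difference is that the paper simply asserts the two hypotheses of Lemma \ref{propreszerolevelgimod} (the annihilation $L_nM_0=\elei njM_0=0$ for $n\geq m$ and the injectivity of $\elei{m-1}j$), whereas you verify them by the commutator computations, and your verification (in particular writing $\elei{m-1}j=c_j(\mathrm{id}+N_j)$ with $N_j$ locally nilpotent) is accurate.
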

\begin{proof}
Let $M_0$ be as in \eqref{eq:defM0}, which is an irreducible $\g_I^{(0)}$-module by Lemma \ref{lem:conirr}.
Note that
\[C_iM_0=L_nM_0=I_n^{(i)}M_0=0\]
for all $n>m-1$ and $i\in I$. Furthermore, it is easy to see that $I_{m-1}^{(i)}$ acts injectively on $M_0$ for all $i\in I$.
  Thus, by applying Lemma \ref{propreszerolevelgimod} with $k=m-1$, we find that
$$\mgivf=\Ind_{\gim}^{\gi}\C\vvf\cong\Ind_{\gizero}^{\gi}\left(\Ind_{\gim}^{\gizero}\C\vvf\right)
   =\Ind_{\gizero}^{\gi}M_0$$
 is an irreducible $\gi$-module.
\end{proof}

\textbf{Proof of Theorem \ref{thm:zerolevel}}:
 For any $i\in I$, denote by $A_i$ the $(m+1)\times (m+1)$-matrix  whose  $(l,k)$-position is $\vfi{m+k-l-1}i$ (here we write $\vfi{-1}i=0$ symbolically).
 Since $\vf(\elei {m+j-1}i)=0$ for any $j>0$, we see that $A_i$ is lower triangular with all diagonal entries being $\vfi{m-1}i\neq0$, and hence is invertible.
 So there exist $a_{i,0},a_{i,-1},\dots,a_{i,-m}\in\C$ such that
 $$\left(\vfl m,\vfl{m+1},\dots,\vfl{2m}\right)=(a_{i,0},a_{i,-1},\dots,a_{i,-m})A_i.$$
 This gives (noticing that $\vf(C_i)=0$)
 \begin{equation*}
  0=\vfl n-\sum_{k=-m}^0a_{i,k}\vfi{n+k}i
    +\frac1{2| I|}\sum_{k\in\Z}\sum_{l=-m}^0a_{i,k}a_{p-i,-n-k-1}\dt_{n+k+l+1,0}\vf(C_i)
  \ \ \ \quad \text{for all }n\geq m.
 \end{equation*}
By taking the summation over all $i\in I$, we get that for any $n\geq m$
 \begin{equation}\label{eq3.7}
  0=\vfl n-\frac1{| I|}\sum_{i\in I}\sum_{k=-m}^0a_{i,k}\vfi{n+k}i+
    \frac1{2| I|^2}\sum_{i\in I}\sum_{k\in\Z}\sum_{l=-m}^0a_{i,k}a_{p-i,-n-k-1}\dt_{n+k+l+1,0}\vf(C_i).
 \end{equation}
 Set
 $$\al=-\frac1{| I|}\sum_{i\in I}\sum_{k=-m}^0\frac{pa_{i,k}}{pk+i}\elei ki.$$
 Then the Lie algebra automorphism $\eta_\al=\exp(\text{ad}\, \al)$ of $\g_I$ has the following expression:
 \begin{align*}
  \eta_\al(\elei nj)&=\elei nj-\frac1{| I|}\sum_{i\in I}\sum_{k=-m}^0a_{i,k}\dt_{i+j,p}\dt_{n+k+1,0}C_j,\ \ \
      \quad \eta_\al(C_s)=C_s,\\
  \eta_\al(L_n)&=L_n-\frac1{| I|}\sum_{i\in I}\sum_{k=-m}^0a_{i,k}\elei {n+k}i
             +\frac1{2| I|^2}\sum_{i\in I}\sum_{k=-m}^0\sum_{l=-m}^0a_{i,k}a_{p-i,-n-k-1}\dt_{n+k+l+1,0}C_i,
 \end{align*}
 where $j\in I, s\in I\cup\{0\}$ and $n\in\Z$.
 Note that $\eta_\al=\exp(\text{ad}\, \al)$ preserves the subalgebra $
 \g^{(m)}$ and so induces a Whittaker function $\varphi'=\phi\circ \eta|_{\g^{(m)}}$ on $\g^{(m)}$.
 Then one gets a new $\g_I$-module structure on $\mgivf$ defined by
 $$x\cdot\vvf=\eta_\al(x)\vvf\ \ \ \ \ \text{for any }x\in\gi,$$
 Clearly, the resulting module is the universal Whittaker $\g_I$-module $M_{\gi,\vf'}$ of type $\vf'$ and isomorphic to $\mgivf$.
 From \eqref{eq3.7} it follows that $\varphi'(L_k)=0,\ \varphi'(\elei ni)=\varphi(\elei ni)$ and $\varphi'(C_j)=0$ for all $k\geq m, n\in\Z$ and $j\in I\cup\{0\}$.
By Lemma \ref{lem:globalconirr}, the $\g_I$-module $M_{\gi,\vf'}$ is irreducible and so is $M_{\g_I,\varphi}$. This finishes the proof.

%%%%%%%%%%%%%%%%%%%%%%%%%%%%%%%%%%%%%%%%%%% £¤£¤£¤£¤£¤£¤£¤£¤£¤£¤£¤£¤£¤£¤£¤£¤£¤£¤£¤£¤£¤£¤£¤£¤subsection 3.3
\subsection{Structure of Whittaker $\g$-modules}
In this subsection, let $\varphi$ be a fixed Whittaker function on $\g^{(m)}$. Set
\[
l_i=\vf(C_i)\quad \text{for}\ 1\leq i\leq p-1\quad \text{and}\quad
J=\{j=1,2,\dots,p-1\mid l_j\ne 0\}.
\]

Note that we have the direct sum decomposition  \[\g^{(m)}=\g_{J^C}^{(m)}\oplus \h_J^+.\]
Denote by
\[\phi=\varphi|_{\h_J^+}\] the restriction of $\varphi$ on $\h_J^{+}$. Note that $\phi$ satisfies the conditions in \eqref{eq:cononphi} (with $I=J$).
Then there is a Whittaker function $\phi^e$ on $\g^{(m)}$ which extends $\phi$ (see \eqref{eq:defphie}).
Set
\[
\varphi'=\varphi-\phi^e\quad \text{and}\quad
\psi=\varphi'|_{\g_{J^C}^{(m)}}.
\]
Then we have the universal Whittaker $\h_J$-module $M_{\h_J,\phi}$ with a universal Whittaker vector $v_\phi$ (which is irreducible by Proposition \ref{prophimod}),
and the universal Whittaker $\g_{J^C}$-module $M_{\g_{J^C},\psi}$ with a universal Whittaker vector $v_\psi$.
With the notation given in Subsection \ref{subsec:tengmod}, we have a tensor product $\g$-module
\[(M_{\h_J,\phi})^\g\otimes (M_{\g_{J^C},\psi})^e.\]

\begin{proposition}\label{propgmodiso}
The universal Whittaker $\g$-module $M_{\g,\varphi}$ is isomorphic to
$\(M_{\h_J,\phi}\)^\g\otimes \(M_{\g_{J^C},\psi}\)^e$.
\end{proposition}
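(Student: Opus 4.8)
The plan is to identify a Whittaker vector of type $\varphi$ inside the tensor product and then compare Poincar\'e--Birkhoff--Witt (PBW) bases. Write $w=v_\phi\otimes v_\psi$ and $T=(M_{\h_J,\phi})^\g\otimes(M_{\g_{J^C},\psi})^e$. First I would check that $w$ is a Whittaker vector of type $\varphi$ for $\g^{(m)}$. Using the decomposition $\g^{(m)}=\g_{J^C}^{(m)}\oplus\h_J^+$, it suffices to treat the two summands. For $x\in\h_J^+$ the element $x$ lies in $\h_J$, hence acts on the first tensor factor through the $\h_J$-action and annihilates the second, so $xw=\phi(x)w=\varphi(x)w$. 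For $x\in\g_{J^C}^{(m)}$ I would invoke Proposition \ref{prop:FFR}, which says precisely that $v_\phi$ is a Whittaker vector of type $\phi^e$ in $(M_{\h_J,\phi})^\g$; since $v_\psi$ is a Whittaker vector of type $\psi$ for $\g_{J^C}^{(m)}$, one gets $xw=(\phi^e(x)+\psi(x))w$, and the definitions $\psi=(\varphi-\phi^e)|_{\g_{J^C}^{(m)}}$ together with $\phi^e$ from \eqref{eq:defphie} give $\phi^e(x)+\psi(x)=\varphi(x)$. Hence $w$ is a Whittaker vector of type $\varphi$, and the universal property of $M_{\g,\varphi}$ yields a $\g$-module epimorphism $\pi\colon M_{\g,\varphi}\to T$ with $\pi(\vvf)=w$.

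Next I would prove bijectivity of $\pi$ by a PBW leading-term comparison. Decompose the complementary space to $\g^{(m)}$ in $\g$ as the disjoint union of the abelian set $\{\elei ni\mid n<0,\ i\in J\}$ and the set $\{L_n\mid n<m\}\cup\{\elei ni\mid n<0,\ i\in J^C\}$, and choose a PBW order putting the second set to the left of the first. Then $M_{\g,\varphi}$ has basis $\{PQ\vvf\}$, where $P$ runs over ordered monomials in the second set and $Q$ over monomials in the first. Since the elements $\elei ni$ with $i\in J$ act only on the first factor of $T$, one has $\pi(PQ\vvf)=P\bigl((Qv_\phi)\otimes v_\psi\bigr)$, where $Qv_\phi$ is a PBW basis vector of the first factor. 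Expanding the action of $P$ by the Leibniz (coproduct) rule, the single term in which every $L_n$ occurring in $P$ acts on the second factor equals $(Qv_\phi)\otimes(Pv_\psi)$, a product PBW basis vector of $T$; every other term has strictly fewer operators acting on the second factor.

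To organize this I would filter the second factor $M_{\g_{J^C},\psi}$ by PBW length and filter $T$ accordingly. The off-diagonal terms, in which at least one $L_n$ acts on the first factor through the quadratic normal-ordered Sugawara expression \eqref{eq:Lnaction}, involve strictly fewer operators on the second factor, and after re-expressing the result in PBW form their second-factor length can only drop further; hence the diagonal term $(Qv_\phi)\otimes(Pv_\psi)$ is the unique leading term of $\pi(PQ\vvf)$. As $(P,Q)$ varies, these leading terms $(Qv_\phi)\otimes\overline{Pv_\psi}$ form a basis of the associated graded of $T$, because $\{Qv_\phi\}$ is a basis of the first factor and $\{\overline{Pv_\psi}\}$ is a basis of the associated graded of the second factor by PBW. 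A standard filtered-to-graded argument then shows that $\{\pi(PQ\vvf)\}$ is a basis of $T$, so $\pi$ is an isomorphism; in particular surjectivity of $\pi$, which could alternatively be obtained by a direct induction on second-factor PBW length, comes for free.

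The main obstacle is the triangularity in the last step: one must verify carefully that moving a Sugawara $L_n$ onto the first factor genuinely lowers the second-factor PBW length even after all the reorderings and the scalar (central and Whittaker) contractions that occur, so that the diagonal term is truly leading and the leading terms are linearly independent. By contrast, the verification that $w$ is a Whittaker vector is comparatively routine once Proposition \ref{prop:FFR} is in hand, since that proposition is exactly the free-field computation fixing the values of $\phi^e$ on $L_n$ for $m\le n\le 2m$.
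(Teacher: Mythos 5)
Your proposal is correct and follows essentially the same route as the paper: exhibit $v_\phi\otimes v_\psi$ as a Whittaker vector of type $\varphi$ (using $\g^{(m)}=\g_{J^C}^{(m)}\oplus\h_J^+$ and the fact that $v_\phi$ has type $\phi^e$ by Proposition \ref{prop:FFR}), obtain the epimorphism $\pi$ from the universal property, and conclude bijectivity from the PBW theorem. Your filtration/leading-term analysis is simply a careful expansion of the paper's one-line appeal to PBW, and the triangularity worry you flag does check out, since every off-diagonal term places strictly fewer operators on the second factor and reordering only lowers PBW length further.
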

\begin{proof}
 One can easily check that $\(M_{\h_J,\phi}\)^\g\otimes \(M_{\g_{J^C},\psi}\)^e$ is a Whittaker module
 of type $\vf$ generated by $v_\phi\otimes v_\psi$.
 Therefore, there is a $\g$-module epimorphism $\pi:M_{\g,\vf}\longrightarrow\(M_{\h_J,\phi}\)^\g\otimes \(M_{\g_{J^C},\psi}\)^e$ defined by
 $\pi(\vvf)=v_\phi\otimes v_\psi$.
 Moreover, applying the PBW theorem one sees that $\pi$ is an isomorphism.
\end{proof}

Now applying Propositions \ref{prop2.5}, \ref{prophimod}, \ref{proptengmod}, \ref{propgmodiso} and Theorem \ref{thm:zerolevel}, we obtain the following result.
\begin{theorem}\label{mainthm}
Let $m$ be a positive integer, and let $\varphi$ be a Whittaker function on $\g^{(m)}$. \\
 (1) Assume that $\vf(C_i)\neq 0$ for all $1\leq i\leq p-1$.
 Then the universal Whittaker $\g$-module $M_{\g,\varphi}$ is irreducible if and only if
 $$(\vfl{2m},\vfl{2m-1})\neq \left(0,\ \ \sum_{i=1}^{p-1}\frac1{2\vf(C_i)}\vfi{m-1}i\vfi{m-1}{p-i}\right).$$
 (2) Assume that $\vf(C_i)=0$ for some $1\leq i\leq p-1$. Then the universal Whittaker $\g$-module $M_{\g,\varphi}$ is irreducible if and only if for every $1\le i\le p-1$,
 $$\vfi {m-1}i\neq0\quad \text{ whenever } \vf(C_i)=0.$$
 (3) Every irreducible Whittaker $\g$-module of type $\varphi$
 is isomorphic to  $(M_{\h_J,\phi})^\g\otimes W^e$,
 where $W$ is an irreducible quotient of the $\gjc$-module $M_{\g_{J^C},\psi}$.
\end{theorem}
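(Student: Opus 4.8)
The plan is to reduce all three statements to the single $\gjc$-module $M_{\gjc,\psi}$ by means of the isomorphism of Proposition~\ref{propgmodiso}, namely $M_{\g,\vf}\cong(M_{\h_J,\phi})^\g\otimes(M_{\gjc,\psi})^e$. Since $\phi$ satisfies \eqref{eq:cononphi} with $I=J$, the module $M_{\h_J,\phi}$ is irreducible over $\h_J$ by Proposition~\ref{prophimod}, hence a fortiori $(M_{\h_J,\phi})^\g$ is an irreducible $\g$-module (cf.\ Proposition~\ref{prop:FFR}). Because $\un l$ is symmetrical, the set $J^C$ is symmetrical, and by construction $\psi(C_i)=\vf(C_i)-\phi^e(C_i)=0$ for every $i\in J^C$, so $\psi$ is a zero-level Whittaker function on $\g_{J^C}^{(m)}$. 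Proposition~\ref{proptengmod}(1) then identifies the $\g$-submodules of the tensor product with the $\gjc$-submodules of $M_{\gjc,\psi}$; in particular $M_{\g,\vf}$ is irreducible if and only if $M_{\gjc,\psi}$ is irreducible over $\gjc$. This single observation drives all three parts.

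For (1), where $\vf(C_i)\neq0$ for all $i$, we have $J=\{1,\dots,p-1\}$ and $J^C=\emptyset$, so $\gjc=\V$ and $M_{\gjc,\psi}=M_{\V,\psi}$. By Proposition~\ref{prop2.5} this is irreducible if and only if $(\psi(L_{2m-1}),\psi(L_{2m}))\neq(0,0)$, so the whole task is to evaluate these two scalars. I would write $\psi(L_n)=\vf(L_n)-\phi^e(L_n)$ and expand $\phi^e(L_n)$ via \eqref{eq:defphie}, then use $\phi(\elei kj)=0$ for $k\ge m$ from \eqref{eq:cononphi}: the factor $\phi(\elei{n-k-1}{p-j})$ survives only when $n-k-1\le m-1$, i.e.\ $k\ge n-m$, which together with $0\le k\le m-1$ leaves the single term $k=m-1$ when $n=2m-1$ and no term at all when $n=2m$. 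Hence $\psi(L_{2m})=\vf(L_{2m})$ and $\psi(L_{2m-1})=\vf(L_{2m-1})-\sum_{i=1}^{p-1}\frac{1}{2\vf(C_i)}\vfi{m-1}i\vfi{m-1}{p-i}$, and feeding these into the criterion $(\psi(L_{2m}),\psi(L_{2m-1}))\neq(0,0)$ yields exactly the stated inequality. I expect this index bookkeeping to be the main obstacle, since it is where the abstract pair collapses to the explicit scalar in the statement.

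For (2), where $\vf(C_i)=0$ for some $i$, the set $J^C$ is nonempty and $M_{\gjc,\psi}$ is a genuine zero-level Whittaker module over $\gjc=\V\oplus\h_{J^C}$. I would apply Theorem~\ref{thm:zerolevel} with $I=J^C$, which gives that $M_{\gjc,\psi}$ is irreducible if and only if $\psi(\elei{m-1}i)\neq0$ for all $i\in J^C$. Since $\phi^e(\elei{m-1}i)=0$ for $i\in J^C$ by \eqref{eq:defphie}, we have $\psi(\elei{m-1}i)=\vf(\elei{m-1}i)$, so the criterion reads $\vfi{m-1}i\neq0$ for every $i$ with $\vf(C_i)=0$, which is the assertion.

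For (3), I would first observe that any irreducible Whittaker $\g$-module of type $\vf$ is a quotient of the universal module $M_{\g,\vf}$, hence of $(M_{\h_J,\phi})^\g\otimes(M_{\gjc,\psi})^e$. By the correspondence of Proposition~\ref{proptengmod}(1), every proper $\g$-submodule of this tensor product has the form $(M_{\h_J,\phi})^\g\otimes(W_0)^e$ for some $\gjc$-submodule $W_0\subset M_{\gjc,\psi}$, so each irreducible quotient is $(M_{\h_J,\phi})^\g\otimes W^e$ with $W=M_{\gjc,\psi}/W_0$ an irreducible $\gjc$-module; its irreducibility follows once more from Proposition~\ref{proptengmod}(1) together with the irreducibility of $(M_{\h_J,\phi})^\g$. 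Conversely each such module is an irreducible Whittaker module of type $\vf$, which completes the classification.
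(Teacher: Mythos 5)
Your proposal is correct and follows essentially the same route as the paper, which derives the theorem precisely by combining Propositions \ref{prop2.5}, \ref{prophimod}, \ref{proptengmod}, \ref{propgmodiso} and Theorem \ref{thm:zerolevel}. Your explicit bookkeeping showing $\psi(L_{2m})=\vfl{2m}$ and $\psi(L_{2m-1})=\vfl{2m-1}-\sum_{i=1}^{p-1}\frac{1}{2\vf(C_i)}\vfi{m-1}i\vfi{m-1}{p-i}$ is exactly the computation the paper leaves implicit, and it is carried out correctly.
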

\begin{remark}
 Whittaker $\g$-modules of level zero were studied in \cite{GX}, where a sketchy proof of the irreducibility of the universal Whittaker $\g$-modules with respect to a Whittaker function $\vf:\g^{(0)}\rightarrow\C$ was given (the result matches with Theorem \ref{mainthm} (2)), and some irreducible quotients of the reducible ones were also discussed (see \cite[Subsection 5.4]{GX}).
\end{remark}

\section{$\U(\C L_0)$-free $\g$-modules}
In this section, we first classify all $\g$-modules that are free of rank 1 when viewed as $\U(\C L_0)$-modules, and then determine their irreducibility.

For every  $\lmd\in\C^\times$ and $\ov\al=(\al_0,\al_1,\dots,\al_{p-1})\in\C^p$, it is straightforward to see that there is a $\g$-module structure on
the polynomial ring $\C[t]$ such that
$$ L_nf(t)=\lmd^n(t+\al_0n)f(t+n),\ \ \ \ \
 \elei nif(t)=\al_i\lmd^nf(t+n+\frac ip),\ \ \ \ \
 C_jf(t)=0, $$
where $n\in\Z, 1\leq i\leq p-1, 0\leq j\leq p-1$, and $f(t)\in\C[t]$.
We shall denote the resulting $\g$-module by $\Omega(\lmd,\ov\al)$.

\begin{theorem}\label{thm3.2}
Let $M$ be a $\g$-module that is free of rank 1 when viewed as a $\U(\C L_0)$-module.
Then $M\cong\Omega(\lmd,\ov\al)$ for some $\lmd\in\C^\times$ and $\ov\al\in\C^p$.
\end{theorem}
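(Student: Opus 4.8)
The plan is to transport the problem to the Virasoro subalgebra $\V$ first, then recover the remaining generators from the $L_0$-grading together with the bracket relations. First I would restrict $M$ to $\V=\mathrm{Span}_\C\{L_n,C_0\}$. Since freeness of rank $1$ over $\U(\C L_0)=\C[L_0]$ involves only the $L_0$-action, $M$ remains free of rank $1$ over $\U(\C L_0)$ as a $\V$-module, so Proposition \ref{prop2.6}(3) identifies $M$ with $\C[t]$ on which $L_0$ acts as multiplication by $t$, $C_0$ acts as $0$, and $L_nf(t)=\lmd^n(t+\al_0 n)f(t+n)$ for some $\lmd\in\C^\times$ and $\al_0\in\C$.

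Next I would pin down the shape of the remaining operators. Taking $m=0$ in \eqref{eq2.3} gives $[L_0,\elei ni]=-(n+\frac ip)\elei ni$; writing $L_0$ as multiplication by $t$, this says $X:=\elei ni$ satisfies $X(t^{k+1})=(t+n+\frac ip)X(t^k)$, which forces $\elei ni f(t)=g_{n,i}(t)\,f(t+n+\frac ip)$ for a unique polynomial $g_{n,i}=\elei ni\cdot 1\in\C[t]$. Similarly each $C_i$, being central, commutes with $L_0$ and so acts as multiplication by some $g_i(t)$; commuting with every $L_n$ then forces $g_i(t+n)=g_i(t)$ for all $n\in\Z$, whence $C_i$ acts as a scalar $\gamma_i$.

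The heart of the argument is to show every $g_{n,i}$ is the constant $\al_i\lmd^n$. Substituting the shift form into the full relation \eqref{eq2.3} yields the functional equation
\begin{equation*}
 \lmd^m\Big[(t+\al_0 m)\,g_{n,i}(t+m)-\big(t+n+\tfrac ip+\al_0 m\big)\,g_{n,i}(t)\Big]=-\big(n+\tfrac ip\big)\,g_{m+n,i}(t)
\end{equation*}
for all $m,n\in\Z$. If one $g_{n,i}$ vanishes the left side forces all of them to vanish, so the family is either all zero (the case $\al_i=0$, where $\elei ni=0$) or everywhere nonzero. In the nonzero case the top-degree terms on the left cancel, giving $\deg g_{m+n,i}\le\deg g_{n,i}$ for all $m$; letting $m$ range shows all $g_{n,i}$ share a common degree $D$. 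Comparing the coefficient of $t^D$ produces the leading-coefficient recursion $\lmd^m c_n(Dm-n-\frac ip)=-(n+\frac ip)c_{m+n}$, and I expect extracting $D$ from this recursion to be the crux. Specializing $n=0$ solves $c_m$ in terms of $c_0$, and resubstituting into the general recursion collapses to $Dmn(D+1)=0$; taking $m=n=1$ forces $D=0$. With $D=0$ the recursion reads $c_{m+n}=\lmd^m c_n$, so $g_{n,i}(t)=\al_i\lmd^n$ with $\al_i:=g_{0,i}$.

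Finally, once each $\elei ni$ is a constant-coefficient shift the operators pairwise commute, so $[\elei mi,\elei{-m-1}{p-i}]=0$; comparing with \eqref{eq2.2} gives $(m+\frac ip)\gamma_i=0$, hence $\gamma_i=0$ and $C_i$ acts as $0$. Assembling the actions of $L_n$, $\elei ni$ and $C_j$ shows $M\cong\Omega(\lmd,\ov\al)$ with $\ov\al=(\al_0,\al_1,\dots,\al_{p-1})$.
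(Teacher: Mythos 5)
Your proposal is correct, and it shares the paper's skeleton — restrict to $\V$, invoke Proposition \ref{prop2.6}(3) to identify $M$ with $\C[t]$, and derive the shift formula $\elei ni f(t)=f\left(t+n+\frac ip\right)\elei ni 1$ from the bracket with $L_0$ — but the crux is handled by a genuinely different argument. The paper pins down $\elei ni 1$ by evaluating $[L_n,\elei 0i]1$ (a computation that tacitly treats $\al_i=\elei 0i1$ as a scalar) and then proves constancy of the nonzero $\elei ni1$ by contradiction, using the vanishing of $[\elei Ni,\elei s{p-i}]$ for $s+N+1\neq0$, i.e.\ the Heisenberg relation \eqref{eq2.2}, together with the sign choice $\left(s+1-\frac ip\right)\left(N+\frac ip\right)<0$. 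You instead extract everything from the mixed relation \eqref{eq2.3} alone: your functional equation and leading-coefficient recursion are correct (specializing $n=0$ gives $c_m=\lmd^m c_0\left(1-\frac{pDm}{i}\right)$, and resubstitution indeed collapses to $Dmn(D+1)=0$, forcing $D=0$), and \eqref{eq2.2} enters only at the end to kill $C_i$. Your route buys two things. First, it treats each index $i$ independently: the paper's contradiction needs both $\elei Ni1$ and $\elei s{p-i}1$ nonzero, so it does not by itself exclude the configuration where $\elei ni1$ is non-constant while every $\elei s{p-i}$ acts as zero; your functional equation rules this out directly. Second, by establishing constancy before any computation that uses it, you avoid the circularity latent in the paper's evaluation of $[L_n,\elei 0i]1$. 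The price is a longer computation; the paper's sign trick is quicker once constancy is granted.
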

\begin{proof}
 Viewing $M$ as a $\V$-module, we see that $M\cong\Omega(\lmd,\al_0)$ for some $\lmd\in\C^\times$ and $\al_0\in\C$ by Proposition \ref{prop2.6}. By identifying the underlying space of $M$ with $\Omega(\lmd,\al_0)=\C[t]$, we have that
 $$L_nf(t)=\lmd^n(t+\al_0n)f(t+n),\ \ C_0f(t)=0\ \ \text{ for all } n\in\Z, f\in\C[t].$$
 Note that $f(L_0)1=f(t)$ and $L_0f(t)=tf(t)$ for all $f(t)\in\C[t]$.

 For each $1\leq i\leq p-1$ and $n\in\Z$, by using induction on $k$ one can show that
 \[\elei nit^k=\elei ni(L_0^k1)=\left(t+n+\frac ip\right)^k\elei ni 1.\]
 This gives
 \begin{equation}\label{eq3.1}
   \elei nif(t)=\elei nif(L_0)1=f\left(t+n+\frac ip\right)\elei ni 1.
 \end{equation}
 By setting $\al_i=\elei 0i1$, it follows from
 \eqref{eq3.1} that
 $$-\frac ip\elei ni1=[L_n,\elei 0i]1=L_n\elei 0i1-\elei 0i L_n 1
        =\al_i\lmd^n(t+\al_0n)-\elei 0i(t+\al_0n)=-\frac ip\lmd^n\alpha_i.$$
 Therefore, we obtain that
 $$\elei nif(t)=\al_i\lmd^nf\left(t+n+\frac ip\right).$$

 Now, to show that $M=\Omega(\lmd,\ov\al)$ with $\ov\al=(\al_0,\al_1,\dots,\al_{p-1})$,
 it remains to prove that $C_i$ acts trivially on $M$ for all $1\le i\le p-1$.
 Indeed, if $\elei ni 1=0$ for some $n\in\Z$,
 then $\elei ni M=0$ by \eqref{eq3.1}.
 So we have $\elei mi M=0$ for all $m\in\Z$ by using the bracket $[\elei ni,L_{m-n}]=\left(n+\frac ip\right)\elei mi$.
 This implies that $C_iM=[\elei ni,\elei {-n-1}{p-i}]M=0$.

 Suppose $\elei ni 1\neq0$ for all $n\in\Z$.
 We claim that in this case $\elei ni 1\in\C^\times$ for all $n\in\Z$, which forces $C_iM=0$.
 Assume otherwise that there exists $N\in\Z$ such that $\elei Ni 1=\sum_{j=0}^{k_N}a_jt^j$ with $k_N>0, a_{k_N}\neq 0$.
 Choose $s\in\Z$ such that $s+N+1\neq 0$ and $\left(s+1-\frac ip\right)\left(N+\frac ip\right)<0$.
 Write $X(t)=\elei Ni 1$ and $Y(t)=\elei s{p-i}1=\sum_{j=0}^{l_s}b_jt^j$ with $l_s\geq0, b_{l_s}\neq 0$.
 Since $[\elei Ni,\elei s{p-i}]=0$, we have by \eqref{eq3.1} that
 \begin{align*}
  0&=\elei Ni\elei s{p-i}1-\elei s{p-i}\elei Ni 1
      =X(t)Y\left(t+N+\frac ip\right)-X\left(t+s+1-\frac ip\right)Y(t)\\
   &=a_{k_N}b_{l_s}\left(l_s\left(N+\frac ip\right)-k_N\left(s+1-\frac ip\right)\right)t^{k_N+l_s-1}
     +\text{lower terms},
 \end{align*}
 which contradicts to the choice of $s$. This completes the proof.
\end{proof}

Write $\ov 0$ for the element in $\C^p$ whose components are all $0$.

\begin{proposition}\label{prop3.1}
 (1) The $\g$-module $\Omega(\lmd,\ov0)$ has an unique irreducible submodule $t\Omega(\lmd,\ov0)$.\\
 (2) The $\g$-module $\Omega(\lmd,\ov\al)$ is irreducible if and only if $\ov\al\neq\ov0$.
\end{proposition}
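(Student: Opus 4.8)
The plan is to split into three scenarios according to which entries of $\ov\al$ vanish: two of them reduce directly to the Virasoro results in Proposition \ref{prop2.6}, and the genuinely new case is handled by manufacturing a degree-lowering operator out of the Heisenberg generators.

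First I would dispose of part (1) together with the ``only if'' half of (2). When $\ov\al=\ov0$, every $\elei ni$ and every $C_j$ acts as $0$ on $\Omega(\lmd,\ov0)$, so the ideal $\h$ acts trivially and the $\g$-action is nothing but the $\V$-module structure of $\Omega(\lmd,0)$. Consequently a subspace is a $\g$-submodule if and only if it is a $\V$-submodule, so the two submodule lattices coincide. Part (1) then follows verbatim from Proposition \ref{prop2.6}(1); in particular $t\Omega(\lmd,\ov0)=t\C[t]$ is a proper nonzero submodule, which shows $\Omega(\lmd,\ov0)$ is reducible. This is exactly the contrapositive of the ``only if'' direction of (2).

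Next, for the ``if'' direction of (2), I would assume $\ov\al\neq\ov0$ and prove irreducibility. If $\al_0\neq0$, then the $\V$-module $\Omega(\lmd,\al_0)$ is already irreducible by Proposition \ref{prop2.6}(2), so $\Omega(\lmd,\ov\al)$ is all the more irreducible over the larger algebra $\g$. The remaining case is $\al_0=0$ with $\al_i\neq0$ for some $1\le i\le p-1$; here the underlying Virasoro module $\Omega(\lmd,0)$ is reducible, so the Heisenberg operators must enter in an essential way. I expect this to be the main obstacle.

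For this last case, fix an index $i$ with $\al_i\neq0$ and let $W$ be a nonzero $\g$-submodule. Since $\al_0=0$ gives $L_0 f(t)=t f(t)$, applying $L_0$ repeatedly yields $L_0^k\cdot 1=t^k\in W$ for all $k\ge0$ once $1\in W$; hence it suffices to produce a nonzero constant inside $W$. To do so I would use the element $D=\elei 0i-\lmd\,\elei{-1}i\in\U(\g)$, which acts by $D f(t)=\al_i\bigl(f(t+\tfrac ip)-f(t-1+\tfrac ip)\bigr)$, a backward difference. For $f$ of degree $d\ge1$ this is a polynomial of degree $d-1$ whose leading coefficient is $\al_i d$ times that of $f$, hence nonzero since $\al_i\neq0$ and $\C$ has characteristic $0$. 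Starting from any nonzero $f\in W$ and iterating $D$ thus yields, after finitely many steps, a nonzero constant in $W$, so $W=\C[t]$ and the module is irreducible. The one delicate point is the non-vanishing of this leading coefficient, which is precisely where the characteristic-zero hypothesis is needed.
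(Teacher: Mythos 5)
Your proof is correct and follows essentially the same route as the paper: reduce parts (1), the ``only if'' direction, and the $\al_0\neq 0$ case to Proposition \ref{prop2.6}, then handle $\al_0=0$, $\al_i\neq 0$ by a degree-lowering difference operator built from the Heisenberg generators, followed by repeated application of $L_0$. The paper's operator is $f(t)\mapsto f(t)-\al_i^{-1}\elei 0i f(t)$ applied to a minimal-degree element (contradiction argument) rather than your iterated $\elei 0i-\lmd\,\elei{-1}i$, but this is only a cosmetic difference.
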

\begin{proof}
 The first assertion follows from Proposition \ref{prop2.6} (1).
 For the second one, we only need to show that $\Omega(\lmd,\ov\al)$ is irreducible whenever  $\ov\al\neq\ov0$. If $\al_0\neq 0$, then $\Omega(\lmd,\ov\al)$ is irreducible by Proposition \ref{prop2.6} (2). Assume now that  $\al_i\ne 0$ for some $1\le i\le p-1$,
 and take a nonzero $\g$-submodule $V$ of $\Omega(\lmd,\ov\al)$.
 Let $f(t)\in V$ be a nonzero polynomial with minimal degree $s$. If $s>0$, then
 $$f(t)-f\(t+\frac ip\)=f(t)-\al_i^{-1}\elei 0if(t)\quad \(\in V\setminus\{0\}\)$$
 is of degree $s-1$, a contradiction. This forces that $f(t)$ has degree $0$ and so $1\in V$.
 Applying $L_0$ to $1$ repeatedly, one gets $V=\Omega(\lmd,\ov\al)$, and $\Omega(\lmd,\ov\al)$ is irreducible as required.
\end{proof}

\section{Tensor product of $\U(\C L_0)$-free modules and restricted modules}

In this section we study tensor products of $\U(\C L_0)$-free modules $\Omega(\lmd,\ov\al)$ and irreducible restricted modules.
Specifically, we determine the irreducibility of the tensor product $\g$-modules $\Omega(\lmd,\ov\al)\otimes M$
for any $\lmd\in\C^\times, \ov\al\in\C^p$ and any irreducible restricted $\g$-module $M$,
and the necessary and sufficient condition for any two such tensor products to be isomorphic.

\begin{proposition}\label{prop5.1}
 Let $\lmd\in\C^\times$, $\ov\al=(\al_0,\al_1,\dots,\al_{p-1})\in\C^p$, and let $M$ be an irreducible restricted $\g$-module. \\
 (1) If $\ov\al\neq \ov0$, then the $\g$-module $\Omega(\lmd,\ov\al)\otimes M$ is irreducible.\\
 (2) The $\g$-module $t\Omega(\lmd,\ov0)\otimes M$ is irreducible.
\end{proposition}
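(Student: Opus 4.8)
The plan is to show that any nonzero $\g$-submodule $W$ of the tensor product coincides with the whole module, and the whole argument for both parts is driven by a single reduction. For part (1), set
$M'=\{w\in M\mid \C[t]\otimes w\subseteq W\}$, where I identify the underlying space of $\Omega(\lmd,\ov\al)$ with $\C[t]$. Since $\C[t]$ is stable under the $\g$-action in $\Omega(\lmd,\ov\al)$ (each of $L_n,\elei ni,C_j$ maps $\C[t]$ into $\C[t]$), the Leibniz identity $g(f\otimes w)=(gf)\otimes w+f\otimes(gw)$ shows at once that $M'$ is a $\g$-submodule of $M$: if $w\in M'$ then $(gf)\otimes w\in W$, hence $f\otimes(gw)\in W$ for all $f$, so $gw\in M'$. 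Because $M$ is irreducible, it therefore suffices to prove $M'\neq0$. For part (2) I define $M'$ analogously with $\C[t]$ replaced by $t\C[t]=t\Omega(\lmd,\ov0)$, which is again $\g$-stable, so the same reduction applies.

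The core step is to extract a vector of the pure form $1\otimes w$ (resp.\ $t\otimes w$) from an arbitrary nonzero element, using that $M$ is restricted. First I would take $0\neq u=\sum_{j=0}^{s}t^{j}\otimes w_j\in W$ with $w_s\neq0$, and choose $N$ so that $L_k w_j=\elei ki w_j=0$ for all $j$ and all $k\geq N$ (possible since $M$ is restricted and there are finitely many $w_j$). For $k\geq N$ the operators then act only on the polynomial factor. Picking an index $r$ with $\al_r\neq0$ (which exists as $\ov\al\neq\ov0$), I set $v_k=\lmd^{-k}\elei kr u=\al_r\sum_j\left(t+k+\frac rp\right)^{j}\otimes w_j$ when $r\geq1$, and $v_k=\lmd^{-k}L_k u=(t+\al_0 k)\sum_j(t+k)^{j}\otimes w_j$ when $r=0$. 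In both cases $v_k\in W$, and viewed as a polynomial in $k$ with coefficients in $\Omega(\lmd,\ov\al)\otimes M$ its top-degree coefficient is a nonzero scalar multiple of $1\otimes w_s$ (the $k$-degree being $s$ in the first case and $s+1$ in the second). Since $W$ is a subspace, a Vandermonde interpolation over finitely many integers $k\geq N$ expresses this leading coefficient as a $\C$-linear combination of the $v_k$, giving $1\otimes w_s\in W$.

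It then remains to climb from $1\otimes w_s$ to all of $\C[t]\otimes w_s$. Applying $L_k$ for large $k$ (so $L_k w_s=0$) and using $L_k(t^{j})=\lmd^{k}(t+\al_0 k)(t+k)^{j}$, the top $t$-degree term of $L_k(t^{j}\otimes w_s)$ is $\lmd^{k}t^{j+1}\otimes w_s$ while all lower-degree terms are multiples of $t^{\le j}\otimes w_s$; by induction on $j$, subtracting the already-obtained lower-degree vectors yields $t^{j+1}\otimes w_s\in W$. Hence $\C[t]\otimes w_s\subseteq W$, so $w_s\in M'$, $M'\neq0$, and therefore $W=\Omega(\lmd,\ov\al)\otimes M$. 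For part (2) the argument is identical after replacing $\Omega(\lmd,\ov0)$ by its submodule $t\Omega(\lmd,\ov0)$: here $\ov\al=\ov0$ forces $\elei ni$ to act as $0$ on the first factor, so I extract $t\otimes w_s\in W$ using $v_k=\lmd^{-k}L_k u=t\sum_j(t+k)^{j}\otimes w_j$, and climb within $t\C[t]$ (noting $L_k(t\C[t])\subseteq t\C[t]$) to get $t\C[t]\otimes w_s\subseteq W$.

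The main obstacle, and the only place where the hypotheses are genuinely used, is the extraction step: it is precisely the restrictedness of $M$ that converts $\elei kr$ (or $L_k$) into a pure fractional-shift or multiplication-and-shift operator on the polynomial factor when applied to the fixed element $u$, which is what makes the family $\{v_k\}$ polynomial in $k$ and lets me isolate the coefficient carrying $w_s$. The only case distinction is whether the available nonzero component of $\ov\al$ is $\al_0$ (forcing the $L_k$-extraction, which raises the $t$-degree by one) or some $\al_i$ with $i\geq1$ (allowing the degree-preserving $\elei kr$-extraction, where the non-integer shift $\frac ip$ is harmless); beyond tracking these polynomial degrees in $k$, I expect no real difficulty.
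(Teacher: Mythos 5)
Your proof is correct and follows essentially the same route as the paper's: extract $1\otimes w_s$ (resp.\ $t\otimes w_s$) via restrictedness and a Vandermonde argument, with the same two cases (some $\al_i\neq0$ with $i\geq1$ versus only $\al_0\neq0$), climb to $\Omega(\lmd,\ov\al)\otimes w_s$ by applying $L_k$ for large $k$, and finish by observing that $\{w\in M\mid \Omega(\lmd,\ov\al)\otimes w\subseteq W\}$ is a nonzero submodule of the irreducible module $M$. The only difference is organizational (you state the reduction first, the paper last) and that you spell out part (2), which the paper leaves as "similar."
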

\begin{proof}
 We will only prove the first assertion, the second one can be proved in a similar way and we omit the details.

 For any $v\in M$, choose $N(v)\in\N$ such that $L_nv=\elei ni v=0$ for all $n\geq N(v)$ and $1\leq i\leq p-1$.
 Let $V$ be a nonzero $\g$-submodule of $\Omega(\lmd,\ov\al)\otimes M$.
 Let $w=\sum_{i=0}^st^i\otimes v_i$ be a nonzero vector in $V$  with $s\in \N, v_i\in M, v_s\neq 0$ and $s$ minimal.
 Set $N=\max\{N(v_i)\mid i=0,1,2,\dots,s\}$. Then
 $$L_nv_i=\elei njv_i=0\quad\text{ for all }n\geq N, 1\leq j\leq p-1, 0\leq i\leq s.$$

 We claim that $1\otimes v_s\in V$.
 The proof of this claim is divided into two cases:

 {\em Case 1}: $\al_i\neq 0$ for some $1\leq i\leq p-1$. For any $n\geq N$, we have
 $$\al_i^{-1}\lmd^{-n}\elei niw=\sum_{j=0}^s\left(t+n+\frac ip\right)^j\otimes v_j=
      \sum_{j=0}^s\left(n+\frac ip\right)^jw_j\in V,$$
 where $w_j\in\Omega(\lmd,\ov\al)\otimes M$.
 By taking $n=N, N+1,N+2,\dots,N+s$, we get $(s+1)$ equations with indeterminants $w_0,w_1,\dots,w_s$.
 Since the coefficient matrix  is an invertible Vandermonde matrix,
 we find that $w_j\in V$ for all $0\le j\le s$.
 In particular, $w_s=1\otimes v_s\in V$.

 {\em Case 2}: $\al_i=0$ for all $1\leq i\leq p-1$ and $\al_0\neq 0$.
 By  applying $L_n$ ($n>N$) to $w$,
 a similar argument as that in Case 1 shows that $1\otimes v_s\in V$.

 Now we are ready to prove that  $\Omega(\lmd,\ov\al)\otimes v_s\subseteq V$.
Let $k\geq 0$ and
 suppose  $t^j\otimes v\in V$ for all $0\leq j\leq k$.
 From the fact
 \[L_n(t^k\otimes v_s)=\lmd^n(t+\al_0n)(t+n)^k\otimes v_s\quad \text{for}\  n\geq N(v),\]  one concludes that
 $$t(t+n)^k\otimes v_s=\lmd^{-n}L_n(t^k\otimes v_s)-\al_0n(t+n)^k\otimes v_s\in V.$$
 This gives that $t^{k+1}\otimes v_s\in V$, as desired.

 Write $W=\{w\in M\mid \Omega(\lmd,\ov\al)\otimes w\in V\}$, which is a nonzero subspace of $M$.
 For every $w\in W, n\in\Z$, and $1\leq i\leq p-1$, we have
 $$\Omega(\lmd,\ov\al)\otimes xw=x\left(\Omega(\lmd,\ov\al)\otimes
          w\right)-\left(x\Omega(\lmd,\ov\al)\right)\otimes w\in V,$$
 where $x$ denotes $L_n$ or $\elei ni$.
 This shows that $W$ is a submodule of $M$.
 Therefore, $W=M$, $V=\Omega(\lmd,\ov\al)\otimes M $ and the $\g$-module $\Omega(\lmd,\ov\al)\otimes M$ is irreducible.
\end{proof}

Now we determine the isomorphism classes of the tensor products of $\U(\C L_0)$-free modules and irreducible restricted modules.
\begin{proposition}\label{prop5.2}
 Let $\lmd,\mu\in\C^\times, \ov\al,\ov\beta\in\C^p$ and $V,W$ be irreducible restricted modules over $\g$.
 Then  $\Omega(\lmd,\ov\al)\otimes V\cong\Omega(\mu,\ov\beta)\otimes W$
 if and only if $\lmd=\mu, \ov\al=\ov\beta$, and $V\cong W$.
\end{proposition}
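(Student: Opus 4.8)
The sufficiency ("if") direction is immediate: isomorphisms of the data induce an obvious isomorphism of tensor products. So the content is in the necessity ("only if") direction. Suppose $\Phi:\Omega(\lmd,\ov\al)\otimes V\to\Omega(\mu,\ov\beta)\otimes W$ is a $\g$-module isomorphism. The strategy is to extract $\lmd,\mu$ and the $\ov\al,\ov\beta$ data from the action of the grading operator $L_0$ together with the generators $L_n,\elei ni$ for large $n$, by exploiting the fact that on an irreducible restricted module $M$ each vector is annihilated by $\g_n$ for $n\gg 0$. The key structural input is that $\Omega(\lmd,\ov\al)$ is itself essentially a "rank one" object whose scalar data $(\lmd,\ov\al)$ is encoded in how $L_n$ and $\elei ni$ shift the polynomial degree and multiply by powers of $\lmd$.

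**Main steps.** First I would set up an "eigenvalue/leading term" analysis analogous to the proof of Proposition \ref{prop5.1}. Given a vector, write it as $\sum_i t^i\otimes v_i$ and use that for $n$ large enough $L_n$ and $\elei ni$ annihilate all the $v_i$; then $L_n(t^i\otimes v_i)=\lmd^n(t+\al_0 n)(t+n)^i\otimes v_i$ and $\elei ni(t^i\otimes v_i)=\al_i\lmd^n(t+n+\tfrac ip)^i\otimes v_i$, so the action of $L_n$ for large $n$ is governed purely by the $\Omega$-factor. The plan is to compare the asymptotic behavior in $n$ of $L_n$ (and $\elei ni$) acting on a fixed vector and its image under $\Phi$. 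The exponential factor $\lmd^n$ on the source side must match $\mu^n$ on the target side, forcing $\lmd=\mu$. Once $\lmd=\mu$ is known, the degree-$n$ polynomial shift $t\mapsto t+n$ combined with the coefficient $\al_0$ (resp. $\beta_0$) in the linear factor $(t+\al_0 n)$ lets me read off $\al_0=\beta_0$, and similarly the scalars $\al_i$ versus $\beta_i$ appear as the leading coefficients in the action of $\elei ni$, giving $\ov\al=\ov\beta$.

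**Isolating the restricted factor.** After $\lmd=\mu$ and $\ov\al=\ov\beta$ are established, I would show $V\cong W$ by the same tensor-decomposition technique used in Proposition \ref{proptengmod}, appealing to Lemma \ref{lem2.7}. Concretely, for any finite subset $S$ of $V$ the subalgebra $\mathrm{ann}_\g(S)$ contains $\g_n$ for $n\gg 0$, and on $\Omega(\lmd,\ov\al)$ this large-degree part already acts irreducibly (this is the restricted-irreducibility input that makes Lemma \ref{lem2.7}(2) applicable). This forces every submodule of $\Omega(\lmd,\ov\al)\otimes V$, and hence the graph of $\Phi$, to respect the tensor decomposition, from which one distills an $\h$- and $\V$-equivariant bijection $V\to W$, i.e. a $\g$-module isomorphism.

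**Expected obstacle.** The hard part will be the bookkeeping in the asymptotic comparison: a single application of $L_n$ mixes the polynomial degree in the $\Omega$-factor with the leading coefficients $\al_0,\al_i$, so cleanly separating the contribution of $\lmd$, of $\al_0$, and of the individual $\al_i$ requires taking several values of $n$ and solving a Vandermonde-type system (as in Case 1 of Proposition \ref{prop5.1}) while carefully tracking that $\Phi$ intertwines the actions. The subtlety is to ensure the leading terms on both sides are genuinely comparable — one must argue that $\Phi$ sends a vector of minimal $t$-degree to one whose top component is again "rank-one-like," so that matching leading coefficients is legitimate rather than obscured by lower-order cross terms.
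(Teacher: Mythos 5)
Your first half is essentially the paper's own argument: the paper also writes $\vf(1\otimes v)=\sum_{i=0}^{s}t^i\otimes w_i$, kills all $v$'s and $w_i$'s by $L_n,\elei ni$ for $n\gg0$, and compares coefficients of $t^{s+1}\otimes w_s$ (forcing $\lmd=\mu$) and of $t^s\otimes w_s$ (relating $\al_0$ and $\be_0$); your worry about "leading-term comparability" is resolved there by exactly this explicit coefficient bookkeeping, plus one extra twist you do not mention: the comparison only yields $\al_0=\be_0+s$, and one must run the same argument on $\vf^{-1}$ to get $\be_0=\al_0+s'$, whence $s=s'=0$ and $\al_0=\be_0$.

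The genuine gap is in your second half. The proposition allows arbitrary $\ov\al,\ov\beta\in\C^p$, in particular $\ov\al=\ov0$. In that case $\Omega(\lmd,\ov0)$ is not irreducible even as a $\g$-module: $t\Omega(\lmd,\ov0)$ is a proper submodule, and it is invariant under every $L_n$ and $\elei ni$, hence under any subalgebra $\mathrm{ann}_\g(S)$ whatsoever. So the hypothesis of Lemma \ref{lem2.7}(2) can never be satisfied with $V_1=\Omega(\lmd,\ov0)$, and your graph argument collapses precisely in the degenerate case that the statement still covers. Two further (more repairable) issues: the graph of $\Phi$ is a submodule of $\Omega(\lmd,\ov\al)\otimes(V\oplus W)$, not of $\Omega(\lmd,\ov\al)\otimes V$, so you must invoke Lemma \ref{lem2.7}(2) with $V_2=V\oplus W$; and the key input you assert --- that $\Omega(\lmd,\ov\al)$ is irreducible over $\mathrm{ann}_\g(S)\supseteq\oplus_{n\ge N}\g_n$ --- is nowhere proved. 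It is true for $\ov\al\neq\ov0$ (by a Vandermonde/degree-reduction argument like the proof of Proposition \ref{prop5.1}), but it is strictly stronger than Proposition \ref{prop5.1} as stated and is false for $\ov\al=\ov0$. The paper sidesteps all of this: having shown $s=0$, i.e. $\vf(1\otimes v)=1\otimes w_0$ for every $v\in V$, it defines $\eta:V\rightarrow W$ by $\vf(1\otimes v)=1\otimes\eta(v)$ and verifies directly, using the intertwining relations for $\elei ni$ and $L_n$, that $\eta$ is a $\g$-module isomorphism --- an argument that is uniform in $\ov\al$, including $\ov\al=\ov0$. To salvage your route you would either need to restrict to $\ov\al\neq\ov0$ and prove the irreducibility-over-annihilators claim, or handle $\ov\al=\ov0$ separately (e.g. via the composition series $t\Omega(\lmd,\ov0)\otimes V\subset\Omega(\lmd,\ov0)\otimes V$), neither of which your plan addresses.
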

\begin{proof}
 It suffices to prove the ``only if" part, and for that we take a $\g$-module isomorphism
 \[\vf:\Omega(\lmd,\ov\al)\otimes V\longrightarrow\Omega(\mu,\ov\beta)\otimes W.\]
 For any $v\in V$, write
 \begin{equation}\label{eq:exvf}
    \vf(1\otimes v)=\sum_{i=0}^st^i\otimes w_i
 \end{equation} with $w_i\in W,s\geq 0$ and $w_s\neq 0$.
 Take $N_v\in\Z_+$ such that $L_nv=L_n(w_i)=\elei njv=\elei njw_i=0$ for all $n\geq N_v,1\leq j\leq p-1,0\leq i\leq s$.

 We first show that $\lambda=\mu$.
 For any $m,n\geq N_v$, by combining \eqref{eq:exvf} with the equality
 \[\vf\left(\left(\lmd^{-m}L_m-\lmd^{-n}L_n\right)(1\otimes v)\right)
     =\left(\lmd^{-m}L_m-\lmd^{-n}L_n\right)\vf(1\otimes v),\]
     we get that
 \begin{equation}\label{eq5.1}
   \al_0(n-m)\sum_{i=0}^st^i\otimes w_i
   =\left(\frac\mu\lmd\right)^m\sum_{i=0}^s(t+\be_0m)(t+m)^i\otimes w_i
     -\left(\frac\mu\lmd\right)^n\sum_{i=0}^s(t+\be_0n)(t+n)^i\otimes w_i.
 \end{equation}
 Notice that the coefficient of $t^{s+1}\otimes w_s$ in the right hand side of \eqref{eq5.1} is
 $\left(\frac\mu\lmd\right)^m-\left(\frac\mu\lmd\right)^n$, while that in the left hand side is 0.
This implies that  $\lmd=\mu$.

Next we prove that $\ov\alpha=\ov\beta$.
Since  $\lmd=\mu$, \eqref{eq5.1} can be rewritten as
 \begin{equation}\label{eq5.2}
   \al_0(n-m)\sum_{i=0}^st^i\otimes w_i=\sum_{i=0}^s(t+\be_0m)(t+m)^i\otimes w_i
          -\sum_{i=0}^s(t+\be_0n)(t+n)^i\otimes w_i.
 \end{equation}
By comparing  the coefficients of $t^s\otimes w_s$ in the both sides of \eqref{eq5.2}, we find that
 $\al_0(n-m)=(\be_0+s)(n-m)$ and so $\al_0=\be_0+s\geq \be_0$.
 Replacing $\vf$ by $\vf^{-1}$ in the above discussion, we also get $\be_0\geq \al_0$.
 This gives  $\be_0=\al_0$ and $s=0$. Particularly, we have  \[\vf(1\otimes v)=1\otimes w_0.\]
Furthermore, for every $n\geq N_v $ and $1\leq i\leq p-1$, we obtain
 $$\be_i\lmd^n(1\otimes w_0)=\elei ni(1\otimes w_0)=\elei ni\vf(1\otimes v)
      =\vf\left(\elei ni(1\otimes v)\right)=\al_i\lmd^n(1\otimes w_0).$$
This shows $\be_i=\al_i$, as desired.

Finally, we show that $V\cong W$.
Let us define a linear map $\eta:V\longrightarrow W$ by
 $$\vf(1\otimes w)=1\otimes \eta(w)\qquad\text{ for any }w\in V.$$
 It is clear that $\eta$ is injective.
 For every $n\in\Z$ and $1\leq i\leq p-1$,
it follows from the equality $\vf\left(\elei ni(1\otimes v)\right)=\elei ni\vf(1\otimes v)$ that
 $$\al_i\lmd^n\vf(1\otimes v)+\vf\left(1\otimes \elei ni v\right)=\elei ni(1\otimes\eta(v))
   =\al_i\lmd^n(1\otimes\eta(v))+1\otimes\elei ni\eta(v).$$
This implies that $1\otimes\eta\left(\elei ni v\right)=\vf\left(1\otimes \elei ni v\right)=1\otimes\elei ni\eta(v)$, and hence
 $\eta\left(\elei ni v\right)=\elei ni\eta(v)$.

 Similarly, the equality $\vf(L_n(1\otimes v))=L_n\vf(1\otimes v)$ for any $n\geq N_v$ implies that
 $$\vf(t\otimes v)=t\otimes \eta(v).$$
Additionally, by applying the equality  $\vf(L_n(1\otimes v))=L_n\vf(1\otimes v)$ for any $n\in \Z$  we have
 $\vf(1\otimes L_nv)=1\otimes L_n\eta(v)$. Thus
 $$\eta(L_nv)=L_n\eta(v)\qquad\text{ for any }n\in\Z.$$
 This proves that $\eta$ is a $\g$-module homomorphism.
 Since $\eta(V)\neq 0$ and $W$ is irreducible, $\eta$ is an isomorphism.
 We complete the proof.
\end{proof}

Note that Whittaker modules are restricted.
Then we have the following two corollaries by Proposition \ref{prop3.1}, Theorem \ref{mainthm},
Proposition \ref{prop5.1} and Proposition \ref{prop5.2}.

\begin{corollary}
 Let $m\in\Z_+$, $\lmd\in\C^\times$, and $\ov\al\in\C^p$. Let $ \vf$ be a Whittaker function on $\g^{(m)}$, and set $J=\{1\leq i\leq p-1\mid \vf(C_i)\neq 0\}$. \\
 (1) If $J=\{1,2,\dots,p-1\}$, then the $\g$-module $\Omega(\lmd,\ov\al)\otimes M_{\g,\varphi}$ is irreducible if and only if
 $$\ov\al\neq\ov0\quad\text{ and }\quad
   \left(\vfl {2m},\ \ \vfl{2m-1} - \sum_{i=1}^{p-1}\frac1{2\vf(C_i)}\vfi{m-1}i\vfi{m-1}{p-i}\right)\neq (0,0).$$
 (2) If $J\neq\{1,2,\dots,p-1\}$, then the $\g$-module $\Omega(\lmd,\ov\al)\otimes M_{\g,\varphi}$ is irreducible if and only if
 $$\ov\al\neq\ov0\quad\text{ and }\quad\vfi {m-1}i\neq 0\quad\text{for all }i\notin J.$$
\end{corollary}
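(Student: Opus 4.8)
The plan is to reduce the corollary to a single clean equivalence and then read off the two parts directly from Theorem \ref{mainthm}. Concretely, I would first establish the decoupling statement: the $\g$-module $\Omega(\lmd,\ov\al)\otimes M_{\g,\varphi}$ is irreducible if and only if $\ov\al\neq\ov0$ and the universal Whittaker module $M_{\g,\varphi}$ is itself irreducible. Granting this, both parts follow at once, since the conditions displayed in parts (1) and (2) are precisely the negations of the reducibility criteria of Theorem \ref{mainthm}: the two cases are distinguished by whether $J=\{1,2,\dots,p-1\}$ (i.e.\ whether all $\vf(C_i)\neq0$), and substituting the corresponding criterion of Theorem \ref{mainthm} (1) or (2) into the decoupling statement yields exactly the asserted inequalities.

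For the sufficiency half of the decoupling statement, I would assume $\ov\al\neq\ov0$ and $M_{\g,\varphi}$ irreducible. Because Whittaker modules are restricted, $M_{\g,\varphi}$ is then an irreducible restricted $\g$-module, so Proposition \ref{prop5.1} (1) applies directly and gives the irreducibility of $\Omega(\lmd,\ov\al)\otimes M_{\g,\varphi}$.

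For the necessity half I would argue by contraposition, exhibiting a proper nonzero submodule whenever one of the two hypotheses fails. If $\ov\al=\ov0$, Proposition \ref{prop3.1} (1) supplies the codimension-one submodule $t\Omega(\lmd,\ov0)$, and tensoring produces the proper nonzero $\g$-submodule $t\Omega(\lmd,\ov0)\otimes M_{\g,\varphi}$. If instead $M_{\g,\varphi}$ is reducible, I would choose a proper nonzero submodule $N$ and observe that $\Omega(\lmd,\ov\al)\otimes N$ is a proper nonzero $\g$-submodule (using $\Omega(\lmd,\ov\al)\neq0$). In either case the tensor product is reducible, which completes the decoupling statement.

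I expect only minor delicate points rather than a genuine obstacle. The two things requiring a little care are that $M_{\g,\varphi}$ is indeed restricted, so that Proposition \ref{prop5.1} is legitimately applicable in the sufficiency direction, and that the submodules produced in the necessity direction are genuinely proper and nonzero, which holds since $t\Omega(\lmd,\ov0)$ has codimension one and $M_{\g,\varphi}\neq0$. The remaining work is pure bookkeeping: checking that the inequality in part (1), namely $(\vfl{2m},\,\vfl{2m-1}-\sum_{i=1}^{p-1}\frac1{2\vf(C_i)}\vfi{m-1}i\vfi{m-1}{p-i})\neq(0,0)$, is exactly the rewriting of the criterion in Theorem \ref{mainthm} (1), and that the condition $\vfi{m-1}i\neq0$ for all $i\notin J$ in part (2) matches Theorem \ref{mainthm} (2).
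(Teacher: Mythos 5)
Your proposal is correct and follows essentially the same route as the paper: the paper obtains this corollary precisely by combining Proposition \ref{prop5.1}(1) (after noting that Whittaker modules are restricted), Proposition \ref{prop3.1}(1) for the case $\ov\al=\ov0$, and the irreducibility criteria of Theorem \ref{mainthm}, which is exactly your decoupling statement plus the bookkeeping you describe. The only detail the paper leaves implicit and you make explicit is the necessity argument when $M_{\g,\varphi}$ is reducible, namely tensoring $\Omega(\lmd,\ov\al)$ with a proper nonzero submodule of $M_{\g,\varphi}$.
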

\begin{corollary}
 Let $m,n\in\Z_+$, $\lmd,\mu\in\C^\times$, $\ov\al,\ov\be\in\C^p$, $\vf$ a Whittaker function on $\g^{(m)}$, and $\phi$ a Whittaker function on $\g^{(n)}$. Let $V_\varphi$ and $V_\phi$ be irreducible Whittaker $\g$-modules of type $\varphi$ and $\phi$, respectively.
 Then $\Omega(\lmd,\ov\al)\otimes V_\varphi\cong\Omega(\mu,\ov\be)\otimes V_\phi$ if and only if
 $(\lmd,\ov\al)=(\mu,\ov\be)$ and $V_\varphi\cong V_\phi$.
\end{corollary}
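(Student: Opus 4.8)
The plan is to obtain this corollary as an immediate consequence of Proposition \ref{prop5.2}. The only gap between the two statements is that Proposition \ref{prop5.2} is phrased for irreducible \emph{restricted} modules, whereas here we are handed irreducible \emph{Whittaker} modules; so the first task is to check that every irreducible Whittaker $\g$-module is restricted, after which the assertion follows by a direct citation.

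First I would verify that $V_\vf$ and $V_\phi$ are restricted. Let $V_\vf$ be generated by a Whittaker vector $v$ of type $\vf$. Since $\vf$ is a Whittaker function on $\g^{(m)}$, one has $\vf(L_{2m+j})=\vf(\elei{m+j-1}i)=0$ for all $j\in\Z_+$ and $1\le i\le p-1$; hence there is $N\in\N$ with $\g_k v=0$ for every $k\ge N$. As $\g$ is $\Z$-graded and $V_\vf=\U(\g)v$, for any $u\in\U(\g)$ there is some $N_u\ge N$ such that $\g_k(uv)=0$ for all $k\ge N_u$, so $V_\vf$ is restricted. The same argument applies to $V_\phi$. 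Thus $V_\vf$ and $V_\phi$ are irreducible restricted $\g$-modules.

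With this in hand, the ``if'' direction is trivial. For the ``only if'' direction I would apply Proposition \ref{prop5.2} with $V=V_\vf$ and $W=V_\phi$: the hypothesis $\Omega(\lmd,\ov\al)\otimes V_\vf\cong\Omega(\mu,\ov\be)\otimes V_\phi$ then yields directly $\lmd=\mu$, $\ov\al=\ov\be$, and $V_\vf\cong V_\phi$, which is exactly the claim $(\lmd,\ov\al)=(\mu,\ov\be)$ together with $V_\vf\cong V_\phi$.

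There is essentially no obstacle here, since all of the real work resides in Proposition \ref{prop5.2}. The only point needing a line of justification is the standard fact that Whittaker modules are restricted, which the grading argument above settles; everything else is a formal specialization.
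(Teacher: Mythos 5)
Your proposal is correct and matches the paper's own argument: the paper simply notes that Whittaker modules are restricted and then cites Proposition \ref{prop5.2}, which is exactly your route, with the restrictedness verified by the same standard grading argument you spell out.
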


%%%%%%%%%%%%%%%%%%%%%%%%%%%%%%%%%%%%%%%%%%%%%%%%%%%%%%%%%%%%%%%%%%%%%%%%%%%%%%%%%%%%%%%%%%  Section 6
\section{Appendix}

In this appendix we give a realization of the gap-$p$ Virasoro algebra as a twisted vertex Lie algebra, and present a slightly more generalized version of Proposition \ref{proprealize} (see Proposition \ref{main}).

\subsection{Vertex Lie algebras $\mathcal{N}_J$}
Let
\[\mathcal{N}=\oplus_{m\in \Z}\(\C T(m) \oplus
(\oplus_{i=1}^{p-1} \C N_i(m))\)\oplus\(\oplus_{j=0}^{[\frac{p}{2}]}\C K_j(-1)\)  \] be a Lie algebra,
where $K_0(-1),K_1(-1),\dots,K_{[\frac{p}{2}]}(-1)$ are central and
\begin{equation}\label{relaaffvir}
	\begin{aligned}
	&[T(m),T(n)]=(m-n)T(m+n)+\frac{1}{12}(m^{3}-m)\delta_{m+n,0}K_{0}(-1);\\
		&[T(m),N_{i}(n)]=-n N_{i}(m+n),\quad
[N_{i}(m),N_{j}(n)]=m\delta_{i+j,p}\delta_{m+n,0}K_{i}(-1),
	\end{aligned}
\end{equation}
for $m,n\in\Z$, $i,j=1,\ldots,p-1$, and
$K_{i}(-1):=K_{p-i}(-1)$ if $i>[\frac{p}{2}]$.
For convenience, we also write $N_0(m)=T(m-1)$ for $m\in \Z$.

In terms of generating functions
\[
N_i(z)=\sum_{m\in \Z} N_i(m) z^{-m-1}\quad \text{and}\quad
K_i(z)=K_i(-1)z^0\quad (i=0,1,\dots,p-1),
\]
the Lie relations in \eqref{relaaffvir} can be rewritten as follows:
\begin{equation*}
\begin{split}%\label{relaaffvirser}
 [N_0(z),N_0(w)]&=\(\frac{\p}{\p w}N_0(w)\)z^{-1}\delta\(\frac{w}{z}\)
      +2 N_0(w)\frac{\p}{\p w}z^{-1}\delta\(\frac{w}{z}\)
      +\frac{1}{12}K_0(w)\(\frac{\p}{\p w}\)^3z^{-1}\delta\(\frac{w}{z}\),\\
 [N_0(z),N_i(w)]&=\(\frac{\p}{\p w}N_i(w)\)z^{-1}\delta\(\frac{w}{z}\)
     +N_i(w)\frac{\p}{\p w}z^{-1}\delta\(\frac{w}{z}\),\\
 [N_i(z),N_j(w)]&=\delta_{i+j,p}K_i(w)\frac{\p}{\p w}z^{-1}\delta\(\frac{w}{z}\),
\end{split}
\end{equation*}
for all $i,j=1,2,\dots,p-1$.

Throughout this appendix, let $J$ be a subset of $\{0,1,\dots,p-1\}$ such that  $p-i\in J$ whenever $ i\in J\setminus\{0\}$.
Denote by $\mathcal{N}_J$ the subalgebra of $\mathcal{N}$ with a basis
\begin{equation*} %\label{basis}
 \left\{N_j(n),\ K_i(-1)\mid  n\in \Z, j\in J, i\in J\cap \left\{0,1,2,\dots,[\frac{p}{2}]\right\}\right\},
\end{equation*}
which is a vertex Lie algebra (see \cite[Definition 2.3]{BL}).

Form the induced $\mathcal{N}_J$-module
\begin{equation}\label{defmnj}
 M_{\mathcal{N}_J}=\U(\mathcal{N}_J)\otimes_{\U\(\mathcal{N}^+_J\)}\, \C,
\end{equation}
where $\C$ denotes the one-dimensional trivial module for the algebra
\[
 \mathcal{N}_J^+=\text{Span}_\C \{N_j(n) \mid j\in J, n\in \N \}.
\]
It is known from \cite{DLM, P} that there is a unique vertex algebra structure on $M_{\mathcal{N}_J}$ such that ${\bm 1}=1\otimes 1$ is the vacuum vector and the vertex operators are given by
\[
Y(N_j(-1){\bm 1},z)=N_j(z),\quad Y(K_j(-1){\bm 1},z)=K_j(z)\quad \text{for }j\in J.
\]

Set
\[J^*=J\setminus\{0\},\quad J^C=\{0,1,\dots,p-1\}\setminus J\quad \text{and}\quad J_0=J\cap \left\{1,2,\dots,[\frac{p}{2}]\right\}.\]
Let $c\in \C$, and let $\underline{\ell}=(\ell_j)_{j\in J^*}\in \C^{|J^*|}$ be a $|J^*|$-tuple such that  $\ell_j=\ell_{p-j}$ for any $j\in J^*$.
If $0\notin J$, we write $I_{\mathcal{N}_J}(\underline{\ell})$ for
the ideal of $M_{\mathcal{N}_J}$ generated by the set
\[
\{ K_j(-1){\bm 1}-\ell_j{\bm 1}\mid j\in J\},\] and
form  the quotient vertex algebra
\[
M_{\mathcal{N}_J}(\underline{\ell})=M_{\mathcal{N}_J}/ I_{\mathcal{N}_J}(\underline{\ell}).
\]
Specially, $M_{\mathcal{N}_{\{j,p-j\}}}(\ell_j)$ is the Heisenberg vertex algebra of level $\ell_j$ for every $j\in J^*$.
If $\ell_j\ne 0$, then $M_{\mathcal{N}_{\{j,p-j\}}}(\ell_j)$ is a simple vertex operator algebra
with a conformal vector
\[\omega_{j}(\ell_j)=\frac{1}{2\ell_j}(N_j(-1) N_{p-j}(-1){\bm 1}+N_{p-j}(-1)N_j(-1) {\bm 1}).\]
Moreover, if $J=J^*$ (i.e., $0\notin J$) and $\ell_j\ne 0$ for all $j\in J$,
then $M_{\mathcal{N}_J}(\underline{\ell})$ becomes a vertex operator algebra of central charge $|J|$ with a conformal vector
\[
\omega_J(\underline{\ell})=\sum_{j\in J} \frac{1}{2\ell_j}
(N_j(-1) N_{p-j}(-1){\bm 1}).
\]
In fact, we have an identification
\[
 M_{\mathcal{N}_{J}}(\underline{\ell})= \otimes_{j\in J_0} M_{\mathcal{N}_{\{j,p-j\}}}(\ell_j)
\]
as vertex operator algebras such that
\[
\omega_{J}(\underline{\ell})=\otimes_{j\in J_0} \omega_j(\ell_j).
\]

If $0\in J$, we write $I_{\mathcal{N}_J}(c,\underline{\ell})$ for
the ideal of $M_{\mathcal{N}_J}$ generated by
\[
\left\{K_0(-1){\bm 1}-c{\bm 1},\ K_j(-1){\bm 1}-\ell_j{\bm 1}\mid j\in J^*\right\},\] and
form the quotient vertex algebra
\[
M_{\mathcal{N}_J}(c,\underline{\ell})=M_{\mathcal{N}_J}/ I_{\mathcal{N}_J}(c,\underline{\ell}).
\]
In particular, $M_{\mathcal{N}_{\{0\}}}(c)$ is the universal Virasoro vertex algebra of central charge $c$.
When $J=\{0,1,2,\dots,p-1\}$ (i.e., $\CN_J=\CN$), we have the quotient vertex algebra
\[
M_{\mathcal{N}}(c,\underline{\ell})=M_{\mathcal{N}}/ I_{\mathcal{N}}(c,\underline{\ell}).
\]

Let $\bm{\ell}=(\ell_1,\ell_2,\dots,\ell_{p-1})\in \C^{p-1}$ be such that $\ell_j=\ell_{p-j}$ for all $1\le j\le p-1$. Set
\[
J_{\bm \ell}=\{1\le j\le p-1\mid \ell_j\ne 0\}\quad \text{and}\quad
{\bm \ell}^*=(\ell_j)_{j\in J_{\bm \ell}}\in \C^{|J_{\bm \ell}|}.
\]
Due to the above discussion, we have the following straightforward result.

\begin{proposition}\label{vaiden}
For any $\bm{\ell}\in \C^{p-1}$ and $ c\in \C$,
we have the identification
\begin{equation} \label{voaid}
 M_{\mathcal{N}}(c,\bm{\ell})=M_{\mathcal{N}_{J_{\bm\ell}^c}}(c-|J_{\bm{\ell}}|,{\un 0})\otimes M_{\mathcal{N}_{J_{\bm{\ell}}}}(\bm{\ell}^*)
\end{equation}
of vertex algebras such that
\begin{equation}\begin{split}\label{conformaliden}
   N_0(-1){\bm 1}&= N_0(-1){\bm 1}\otimes \omega_{J_{\bm\ell}}({\bm\ell}^*),\\
   N_i(-1){\bm 1}&=N_{i}(-1){\bm 1}\otimes {\bm 1}\quad \text{if}\ i\notin J_{\bm\ell},\\
   N_i(-1){\bm 1}&= {\bm 1}\otimes N_{i}(-1){\bm 1}\quad \text{if}\ i\in J_{\bm\ell},
\end{split}
\end{equation}
where $i=1,2,\dots,p-1$ and $\un{0}$ is the $|J_{\bm\ell}^C|$-tuple with all components being zero.
\end{proposition}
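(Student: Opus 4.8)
The plan is to realize $M_{\mathcal{N}}(c,\bm\ell)$ as an internal tensor product of two mutually commuting vertex subalgebras: one carrying the nonzero-level Heisenberg data, the other carrying the Virasoro data together with the zero-level fields, after which the coset (Sugawara) construction pins down the central charges and conformal vectors. Inside $M_{\mathcal{N}}(c,\bm\ell)$ I would take $U_2$ to be the subalgebra generated by the fields $N_i(z)$ with $i\in J_{\bm\ell}$, and $U_1$ the subalgebra generated by the fields $N_j(z)$ with $j\notin J_{\bm\ell}$ together with the shifted Virasoro vector
\[
\omega'=N_0(-1){\bm 1}-\omega_{J_{\bm\ell}}({\bm\ell}^*),
\]
where $\omega_{J_{\bm\ell}}({\bm\ell}^*)$ is the Sugawara conformal vector of the Heisenberg factor, regarded inside $M_{\mathcal{N}}(c,\bm\ell)$; it makes sense precisely because each $\ell_j$ with $j\in J_{\bm\ell}$ is nonzero.

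First I would record the commutation facts. Since $J_{\bm\ell}$ is symmetric, for $i\in J_{\bm\ell}$ and $j\notin J_{\bm\ell}$ one never has $i+j=p$, so $[N_i(m),N_j(n)]=0$; thus every zero-level field commutes with every Heisenberg field. Next, by the Sugawara property established above (via the identification $M_{\mathcal{N}_{J}}(\underline{\ell})=\otimes_{j\in J_0}M_{\mathcal{N}_{\{j,p-j\}}}(\ell_j)$ and the simplicity of the factors), the modes of $\omega_{J_{\bm\ell}}({\bm\ell}^*)$ act on each $N_i(z)$ with $i\in J_{\bm\ell}$ exactly as $N_0$ does; hence the modes of $\omega'$ commute with each such $N_i(z)$, so $\omega'$ generates a Virasoro field commuting with all of $U_2$, and by the standard coset mode computation its central charge is $c-|J_{\bm\ell}|$. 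Moreover $[\omega',N_j]=[N_0,N_j]$ for $j\notin J_{\bm\ell}$, since $\omega_{J_{\bm\ell}}({\bm\ell}^*)$ commutes with these fields. Together these identities show that $\omega'$ and the $N_j$ with $j\notin J_{\bm\ell}$ satisfy the defining relations of $\mathcal{N}_{J_{\bm\ell}^C}$ at central charge $c-|J_{\bm\ell}|$ and levels zero, so that $U_1\cong M_{\mathcal{N}_{J_{\bm\ell}^C}}(c-|J_{\bm\ell}|,\un 0)$ and $U_2\cong M_{\mathcal{N}_{J_{\bm\ell}}}({\bm\ell}^*)$.

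Since $U_1$ and $U_2$ commute, the universal properties of the two factors yield a canonical vertex algebra homomorphism
\[
\Phi:M_{\mathcal{N}_{J_{\bm\ell}^C}}(c-|J_{\bm\ell}|,\un 0)\otimes M_{\mathcal{N}_{J_{\bm\ell}}}({\bm\ell}^*)\longrightarrow M_{\mathcal{N}}(c,\bm\ell)
\]
sending $N_0(-1){\bm 1}\otimes{\bm 1}\mapsto\omega'$, ${\bm 1}\otimes N_i(-1){\bm 1}\mapsto N_i(-1){\bm 1}$ for $i\in J_{\bm\ell}$, and $N_j(-1){\bm 1}\otimes{\bm 1}\mapsto N_j(-1){\bm 1}$ for $j\notin J_{\bm\ell}$. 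To finish I would check that $\Phi$ is a linear isomorphism: both sides carry PBW bases built from the creation modes $N_r(-n){\bm 1}$ with $n>0$, and $\Phi$ is graded and matches these bases, hence is bijective in each degree. The conformal vector identity \eqref{conformaliden} is then immediate, because the conformal vector of the tensor product is $\omega'\otimes{\bm 1}+{\bm 1}\otimes\omega_{J_{\bm\ell}}({\bm\ell}^*)$, whose image under $\Phi$ is $\omega'+\omega_{J_{\bm\ell}}({\bm\ell}^*)=N_0(-1){\bm 1}$, and the remaining generator identifications hold by definition of $\Phi$.

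The main obstacle is the coset verification in the second paragraph: one must confirm that $\omega'$ has trivial operator product with the nonzero-level Heisenberg fields and that its central charge equals exactly $c-|J_{\bm\ell}|$. This is where the nonvanishing of the $\ell_j$ and the explicit Sugawara vector $\omega_{J_{\bm\ell}}({\bm\ell}^*)$ are essential. Given the structural results already proved for the factors $M_{\mathcal{N}_{\{j,p-j\}}}(\ell_j)$, however, it reduces to a routine mode computation, which is why the statement is flagged as straightforward.
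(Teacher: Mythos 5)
Your proposal is correct and follows essentially the route the paper intends: the paper states this proposition with no written proof (``due to the above discussion, we have the following straightforward result''), and that discussion is precisely the Heisenberg--Sugawara coset splitting you carry out, with $\omega'=N_0(-1){\bm 1}-\omega_{J_{\bm\ell}}({\bm\ell}^*)$ generating the level-zero factor and the nonzero-level Heisenberg fields generating the other. The one step to tighten is bijectivity of $\Phi$: it does not literally carry PBW monomials to PBW monomials (a monomial in $\omega'$-modes expands into a triangular combination of PBW monomials in the $T$- and $N_i$-modes), so you should conclude either by that leading-term triangularity, or by noting that $\Phi$ is a graded surjection --- its image contains every generator, including $N_0(-1){\bm 1}=\omega'+\omega_{J_{\bm\ell}}({\bm\ell}^*)$ --- between graded spaces whose homogeneous components have equal finite dimensions.
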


\subsection{$\sigma_J$-twist of $\mathcal{N}_J$}

Let $\sigma$ be the Lie algebra automorphism of $\CN$ defined by
\[
\sigma(N_i(m))=\xi^i N_i(m)\quad \text{and}\quad
\sigma(K_i(-1))=K_i(-1)
\]
for $i=0,1,\dots,p-1$ and $m\in \Z$,
where $\xi$ is a primitive $p$-th root of unity.
For any $c\in \C$ and ${\bm\ell}=(\ell_1,\ell_2,\dots,\ell_{p-1})\in \C^{p-1}$,
there is an automorphism of the quotient vertex algebra $M_\CN(c,\bm\ell)$,
still denoted by $\sigma$, such that
\[
\sigma(N_j(-1){\bm 1})=\xi^j N_j(-1){\bm 1}\quad\text{and}\quad
\sigma(K_j(-1){\bm 1})=K_j(-1){\bm 1}=l_j\bm1 \quad\text{for all } 0\le j\le p-1,
\]
where we have abused the symbol $\bm1$ for the vacuum vector in $M_\CN(c,\bm\ell)$.

For any subset $J$ of $\{0,1,2,\dots,p-1\}$, denote by $\sigma_J$ the restriction of $\sigma$ onto $\CN_J$,
which preserves the subalgebra $\CN_J^+$.
Thus there is a unique automorphism, still denoted by $\sigma_J$,
of the vertex algebra $M_{\CN_J}$ such that
\[
\sigma_J(N_j(-1){\bm 1})=\xi^j N_j(-1){\bm 1}\quad\text{and}\quad
\sigma_J(K_j(-1){\bm 1})=K_j(-1){\bm 1} \quad\text{for all } j\in J.
\]
Furthermore, it induces an automorphism of the quotient vertex algebra
$M_{\mathcal{N}_{J_{\bm\ell}^c}}(c-|J_{\bm{\ell}}|,{\un 0})$, denoted by $\sigma_{J_{\bm{\ell}}^c}$,
if $0\in J$,
and an automorphism of $M_{\mathcal{N}_{J_{\bm{\ell}}}}(\bm{\ell}^*)$,
denoted by $\sigma_{J_{\bm{\ell}}}$, if $0\notin J$.

\begin{lemma}\label{autiden}
Under the identification in \eqref{voaid}, we have
\[\sigma=\sigma_{J_{\bm{\ell}}^c}\otimes \sigma_{J_{\bm{\ell}}}.\]
as automorphisms of vertex algebras.
\end{lemma}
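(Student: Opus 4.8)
The plan is to use the principle that a homomorphism of vertex algebras is completely determined by its values on a strong generating set. Here $M_{\CN}(c,\bm{\ell})$ is strongly generated, as a vertex algebra, by the states $N_i(-1){\bm 1}$ for $i=0,1,\dots,p-1$, together with the central (hence scalar) elements $K_j(-1){\bm 1}$. Both maps in the asserted identity are vertex algebra automorphisms of $M_{\CN}(c,\bm{\ell})$: the left-hand side $\sigma$ by construction, and the right-hand side $\sigma_{J_{\bm{\ell}}^c}\otimes\sigma_{J_{\bm{\ell}}}$ because a tensor product of vertex algebra automorphisms is again an automorphism of the tensor product vertex algebra. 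Since both fix the vacuum and both fix the scalars $K_j(-1){\bm 1}$, it suffices to check that they agree on each generator $N_i(-1){\bm 1}$.

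First I would dispose of the generators indexed by $i\in\{1,\dots,p-1\}$, which are immediate from \eqref{conformaliden} and the definitions of the tensor factors. If $i\notin J_{\bm{\ell}}$, then $N_i(-1){\bm 1}=N_i(-1){\bm 1}\otimes{\bm 1}$, and $\sigma_{J_{\bm{\ell}}^c}\otimes\sigma_{J_{\bm{\ell}}}$ scales the first tensorand by $\xi^i$ while fixing the vacuum in the second, producing $\xi^i N_i(-1){\bm 1}=\sigma(N_i(-1){\bm 1})$. Symmetrically, if $i\in J_{\bm{\ell}}$ then $N_i(-1){\bm 1}={\bm 1}\otimes N_i(-1){\bm 1}$ and the scaling by $\xi^i$ takes place on the second tensorand, again matching $\sigma$.

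The only genuine computation, and the main obstacle, is the case $i=0$, where \eqref{conformaliden} reads $N_0(-1){\bm 1}=N_0(-1){\bm 1}\otimes\omega_{J_{\bm{\ell}}}({\bm{\ell}}^*)$. Since $\sigma_{J_{\bm{\ell}}^c}$ fixes the Virasoro conformal vector $N_0(-1){\bm 1}$ (as $\xi^0=1$), the point to establish is that $\sigma_{J_{\bm{\ell}}}$ stabilizes $\omega_{J_{\bm{\ell}}}({\bm{\ell}}^*)=\sum_{j\in J_{\bm{\ell}}}\frac1{2\ell_j}N_j(-1)N_{p-j}(-1){\bm 1}$. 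For this I would invoke the compatibility of the automorphism with all $n$-th products, $\sigma_{J_{\bm{\ell}}}(a_n b)=\bigl(\sigma_{J_{\bm{\ell}}}(a)\bigr)_n\,\sigma_{J_{\bm{\ell}}}(b)$; applying it to $N_j(-1)N_{p-j}(-1){\bm 1}=\bigl(N_j(-1){\bm 1}\bigr)_{-1}\bigl(N_{p-j}(-1){\bm 1}\bigr)$ produces the scalar $\xi^j\xi^{p-j}=\xi^p=1$ on each summand. Thus $\sigma_{J_{\bm{\ell}}}$ fixes $\omega_{J_{\bm{\ell}}}({\bm{\ell}}^*)$, so $(\sigma_{J_{\bm{\ell}}^c}\otimes\sigma_{J_{\bm{\ell}}})(N_0(-1){\bm 1})=N_0(-1){\bm 1}=\sigma(N_0(-1){\bm 1})$. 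Having matched both automorphisms on the full generating set and on the vacuum, I would conclude by the uniqueness principle that $\sigma=\sigma_{J_{\bm{\ell}}^c}\otimes\sigma_{J_{\bm{\ell}}}$. I emphasize that the crux is entirely the invariance of the conformal vector, which hinges on $\xi$ being a primitive $p$-th root of unity paired with the symmetric coupling of $j$ and $p-j$ in $\omega_{J_{\bm{\ell}}}({\bm{\ell}}^*)$.
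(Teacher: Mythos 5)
Your proof is correct. There is nothing in the paper to compare it against: the authors state this lemma with no proof at all, treating it as immediate from the definitions, and your argument -- reduce to the generating set $\{N_i(-1)\bm{1}\}$ together with the scalars $K_j(-1)\bm{1}$, then observe that $\sigma_{J_{\bm{\ell}}}$ fixes $\omega_{J_{\bm{\ell}}}(\bm{\ell}^*)$ because each summand $\bigl(N_j(-1)\bm{1}\bigr)_{-1}\bigl(N_{p-j}(-1)\bm{1}\bigr)$ acquires the scalar $\xi^j\xi^{p-j}=\xi^p=1$ -- is exactly the verification the authors leave implicit, with the $i=0$ case being the only nontrivial point. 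One remark: equation \eqref{conformaliden} as printed, $N_0(-1)\bm{1}=N_0(-1)\bm{1}\otimes\omega_{J_{\bm{\ell}}}(\bm{\ell}^*)$, is an abuse of notation for the standard tensor-product conformal vector $N_0(-1)\bm{1}\otimes\bm{1}+\bm{1}\otimes\omega_{J_{\bm{\ell}}}(\bm{\ell}^*)$ (compare the sum formula for the $L_k$-action in Proposition \ref{main}); you read it literally, but your argument is insensitive to this discrepancy, since both tensorands and the vacuum are fixed by the respective automorphisms, so the conclusion for $i=0$ holds under either reading.
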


On the other hand, associated to the automorphism $\sigma$, we have the  $\sigma$-twist $\CN(\sigma)$ of $\CN$ from \cite{BL}.
Specifically, $\CN(\sigma)$ is a Lie algebra with a basis
\[
\left\{\bar{N}_i(m+\frac{i}{p}), \bar{K}_j(-1) \mid m\in \Z, i=0,1,\dots,p-1, j=0,1,\dots, [\frac{p}{2}]\right\},
\]
and Lie relations (in terms of generating functions)
\begin{equation*}\begin{split}
[\bar{N}_0(z),\bar{N}_0(w)]&=\(\frac{\p}{\p w}\bar{N}_0(w)\)z^{-1}\delta\(\frac{w}{z}\)
+2\bar{N}_0(w)\frac{\p}{\p w}z^{-1}\delta\(\frac{w}{z}\)
 +\frac{1}{12}\bar{K}_0(w)\(\frac{\p}{\p w}\)^3z^{-1}\delta\(\frac{w}{z}\),\\
[\bar{N}_0(z),\bar{N}_i(w)]&=\(\frac{\p}{\p w}\bar{N}_i(w)\)z^{-1}\delta\(\frac{w}{z}\)+\bar{N}_i(w)\frac{\p}{\p w}z^{-1}\delta\(\frac{w}{z}\),
\\
[\bar{N}_i(z),\bar{N}_j(w)]&=\delta_{i+j,p}\bar{K}_i(w)\frac{\p}{\p w}z^{-1}\(\frac{w}{z}\)^{\frac{i}{p}}\delta\(\frac{w}{z}\),
\end{split}
\end{equation*}
for $i,j=1,2,\dots,p-1$, where the generating functions are
\[
\bar{N}_i(z)=
\sum_{m\in \Z}\bar{N}_i(m+\frac{i}{p})z^{-m-\frac{i}{p}-1},\quad
\bar{K}_i(z)=\bar{K}_i(-1)z^0,
\]
for $i=0,1,\dots,p-1$ and $\bar{K}_i(-1)=\bar{K}_{p-i}(-1)$ if $i>[\frac{p}{2}]$.

Moreover, we have the $\sigma_J$-twist $\CN_J(\sigma_J)$ of $\CN_J$,
which is exactly the subalgebra of $\CN(\sigma)$ spanned by
\[
\left\{\bar{N}_j(m+\frac{j}{p}), \bar{K}_j(-1) \mid m\in \Z, j\in J\right\}.
\]
We say that an $\CN_J(\sigma_J)$-module $W$ is restricted if for every $w\in W$ and $j\in J$, $\bar{N}_j(m+\frac{j}{p}).w=0$ for any sufficiently large $m$.

Recall the vertex algebra $M_{\CN_J}$ from \eqref{defmnj}.
The following result is standard (cf. \cite{Li,BL}).
\begin{proposition}\label{chartwistmod}
Every (weak) $\sigma_J$-twisted $M_{\CN_J}$-module $(W,Y_W(\cdot,z))$ is a restricted $\CN_J(\sigma_J)$-module such that
\[
\bar{N}_i(z)=Y_W(N_i(-1)\bm{1},z),\quad \bar{K}_i(z)=Y_W(K_i(-1)\bm{1},z)\quad\text{for any }i\in J.
\]
On the other hand, every  restricted $\CN_J(\sigma_J)$-module $W$  is a (weak) $\sigma_J$-twisted $M_{\CN_J}$-module such that
\[
Y_W(N_i(-1)\bm{1},z)=\bar{N}_i(z),\quad Y_W(K_i(-1)\bm{1},z)=\bar{K}_i(z)\quad\text{for any }i\in J.
\]
\end{proposition}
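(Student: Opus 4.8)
The plan is to invoke the general correspondence between twisted modules for a vertex algebra built from a vertex Lie algebra and restricted modules for its twisted affinization, and then to verify that in the case at hand the resulting relations are precisely the defining relations of $\CN_J(\sigma_J)$. Recall that $M_{\CN_J}$ is the vertex algebra attached to the vertex Lie algebra $\CN_J$ by the construction of \cite{DLM, P}, that it is strongly generated by the vectors $N_j(-1)\bm{1}$ and $K_j(-1)\bm{1}$ ($j\in J$), and that $\sigma_J$ lifts canonically to a vertex algebra automorphism of $M_{\CN_J}$.

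For the forward direction, I would start from a weak $\sigma_J$-twisted $M_{\CN_J}$-module $(W, Y_W(\cdot,z))$ and read off the component operators of the generating fields $Y_W(N_i(-1)\bm{1}, z)$ and $Y_W(K_i(-1)\bm{1}, z)$. Since $N_i(-1)\bm{1}$ is an eigenvector of $\sigma_J$ of eigenvalue $\xi^i$, the twisted module axiom forces $Y_W(N_i(-1)\bm{1}, z)$ to expand only in the fractional powers appearing in $\bar{N}_i(z)=\sum_{m\in\Z}\bar{N}_i(m+\frac ip)z^{-m-\frac ip-1}$; its components thus define operators indexed by $\Z+\frac ip$, which we take as the action of $\bar{N}_i(m+\frac ip)$. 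The lower-truncation part of the weak module axiom is exactly the restricted condition. It then remains to check the bracket relations of $\CN_J(\sigma_J)$, and these follow by applying the twisted commutator formula (the singular part of the twisted Jacobi identity) to pairs of generators, using that the nonnegative products $(N_i(-1)\bm{1})_n(N_j(-1)\bm{1})$ in $M_{\CN_J}$ are governed by the vertex Lie relations \eqref{relaaffvir}.

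For the converse, given a restricted $\CN_J(\sigma_J)$-module $W$, I would set $Y_W(N_i(-1)\bm{1}, z) = \bar{N}_i(z)$ and $Y_W(K_i(-1)\bm{1}, z) = \bar{K}_i(z)$ on the strong generators and extend to all of $M_{\CN_J}$ by the standard normal-ordering (reconstruction) procedure. The point to establish is that the extended $Y_W$ satisfies the full twisted Jacobi identity, so that $(W, Y_W)$ is a genuine $\sigma_J$-twisted module; this is the content of the existence theorems of \cite{Li, BL}, which apply once one checks that the generating fields are pairwise local in the $\sigma_J$-twisted sense.

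The main obstacle is precisely this last upgrade --- passing from the commutator relations, which are elementary to match, to the full twisted Jacobi identity required of a bona fide twisted module. This is exactly the step that the cited general theory is designed to handle, and for the fields at hand it reduces to the observation that the nonnegative products $(N_i(-1)\bm{1})_n(N_j(-1)\bm{1})$ ($n\ge 0$) remain inside the vertex Lie algebra $\CN_J$, which is closed under $\p$ and these products; hence the generating fields close among themselves and no genuinely new normal-ordered corrections arise.
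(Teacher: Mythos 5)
Your proposal is correct and takes essentially the same route as the paper: the paper gives no independent argument, simply declaring the result standard and citing Li's theory of twisted local systems and Billig--Lau's treatment of vertex Lie algebras, and your sketch (component operators plus the twisted commutator formula in one direction, reconstruction from generating fields with twisted locality in the other) is exactly the content of those cited results.
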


Let $U$ be a vector space, $j=0,1,\dots,p-1$, and
\[a(z)=\sum_{n\in \Z+\frac{j}{p}} a(n)z^{-n-1}\in  \mathrm{End}(U)[[z^{\pm \frac{1}{p}}]].\]
We split $a(z)=a(z)_++a(z)_-$ as follows:
\[
a(z)_+:=\sum_{n<0} a(n)z^{-n-1}\ \quad\text{and}\quad
a(z)_-:=\sum_{n\ge 0} a(n)z^{-n-1}.
\]
Then we have the usual normal ordered product
\[
:a(z)b(w):=a(z)_+b(w)+b(w)a(z)_-,
\]
where $b(z)$ is another element in $\mathrm{End}(U)[[z^{\pm \frac{1}{p}}]]$.

\begin{lemma}
Assume that $J=J^*$ and $\underline{\ell}\in \C^{|J|}$ is generic.
Then for every $\sigma_J$-twisted $M_{\CN_J}(\underline{\ell})$-module $V$, we have
\begin{equation}\label{key}
Y_V(\omega_{J}(\underline{\ell}),z)=\sum_{j\in  J}\frac{1}{2\ell_j}\(:Y_V(N_j(-1){\bm 1},z) Y_V(N_{p-j}(-1){\bm 1},z):
-\ell_j{{\frac{j}{p}}\choose{2}}z^{-2}\).
\end{equation}
\end{lemma}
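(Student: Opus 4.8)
The plan is to reduce the identity to a single Heisenberg pair and then read off the anomaly from the twisted associativity formula for a free-boson vertex algebra. Since $\omega_J(\underline{\ell})=\sum_{j\in J}\frac{1}{2\ell_j}N_j(-1)N_{p-j}(-1){\bm 1}$, linearity of $Y_V(\cdot,z)$ reduces the claim to establishing, for each fixed $j\in J$, the single-pair identity
\[
Y_V\left(N_j(-1)N_{p-j}(-1){\bm 1},z\right)=\,:Y_V(N_j(-1){\bm 1},z)\,Y_V(N_{p-j}(-1){\bm 1},z):\,-\,\ell_j\binom{j/p}{2}z^{-2},
\]
and summing against $\frac{1}{2\ell_j}$. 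Using the identification $M_{\CN_J}(\underline{\ell})=\otimes_{j\in J_0}M_{\CN_{\{j,p-j\}}}(\ell_j)$ together with Lemma \ref{autiden}, I may equivalently run the computation inside the simple Heisenberg vertex operator algebra $M_{\CN_{\{j,p-j\}}}(\ell_j)$, on which $V$ restricts to a $\sigma_{\{j,p-j\}}$-twisted module; this is the natural home for the conformal vector $\omega_j(\ell_j)$ to which \cite[(1.14)]{KW} applies.

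Next I would set $a=N_j(-1){\bm 1}$ and $b=N_{p-j}(-1){\bm 1}$. By Proposition \ref{chartwistmod}, $V$ is a restricted $\CN_J(\sigma_J)$-module with $Y_V(a,z)=\bar N_j(z)$ and $Y_V(b,z)=\bar N_{p-j}(z)$. Because $\sigma_J a=\xi^j a$, the field $Y_V(a,z)$ carries modes in $j/p+\Z$, and the twisted associativity (iterate) formula underlying \cite[(1.14)]{KW} (see also \cite{Li,BL}) expresses the vertex operator of $a_{-1}b$ as the normal-ordered product corrected by a finite sum of lower products,
\[
Y_V(a_{-1}b,z)=\,:Y_V(a,z)Y_V(b,z):\,+\,\sum_{i\ge1}c_i\,z^{-i}\,Y_V(a_{i-1}b,z),
\]
where the coefficients $c_i$ arise from binomially expanding the fractional-power factor $(w/z)^{i/p}$ present in the twisted bracket $[\bar N_i(z),\bar N_{p-i}(w)]$ of \eqref{relaaffvir}. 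A direct computation in $M_{\CN_J}$ from \eqref{relaaffvir} gives $a_0b=N_j(0)N_{p-j}(-1){\bm 1}=0$, $a_1b=[N_j(1),N_{p-j}(-1)]{\bm 1}=\ell_j{\bm 1}$, and $a_ib=0$ for $i\ge2$; hence only the term involving $a_1b$ survives. Since $Y_V(\ell_j{\bm 1},z)=\ell_j$, the whole anomaly collapses to a single scalar multiple of $z^{-2}$, and evaluating the relevant coefficient yields $-\ell_j\binom{j/p}{2}z^{-2}$.

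The hard part is exactly the bookkeeping that fixes this last coefficient, namely tracking the correct fractional exponent and the correct sign produced by the twist: a naive point-splitting of the two twisted fields $\bar N_j(z)$ and $\bar N_{p-j}(z)$ differs from $Y_V(a_{-1}b,z)$ precisely by this $z^{-2}$ anomaly, which originates from the factor $(w/z)^{i/p}$ in $[\bar N_i(z),\bar N_{p-i}(w)]=\ell_i\,\frac{\p}{\p w}\left(z^{-1}(w/z)^{i/p}\delta(w/z)\right)$. I would pin the sign by an independent consistency check: the scalar part of the coefficient of $z^{-2}$ in $Y_V(\omega_J(\underline{\ell}),z)$ equals $-\sum_{j\in J}\frac{1}{2}\binom{j/p}{2}=\sum_{j\in J}\frac{j(p-j)}{4p^2}$, and this must coincide with the constant $\sum_{j\in I}\frac{j(p-j)}{4p^2}$ appearing in the $L_0$-action in \eqref{eq:Lnaction} of Proposition \ref{proprealize} (cf.\ the Remark following it). This agreement both determines the sign of the anomaly and reconciles the appendix with the free-field realization of the main text, completing the proof.
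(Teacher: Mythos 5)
Your proposal is correct and follows essentially the same route as the paper: the paper's entire proof is the citation of the twisted-field formula \cite[(1.14)]{KW}, and your argument is precisely an unwinding of that formula --- the normal-ordered product plus binomial corrections $\binom{j/p}{i}z^{-i}Y_V(a_{i-1}b,z)$, of which only the $i=2$ term survives because $a_0b=0$, $a_1b=\ell_j{\bm 1}$ and $a_ib=0$ for $i\ge 2$. The only real difference is cosmetic: you pin the sign of the $z^{-2}$ anomaly by consistency with the constant $\sum_j \frac{j(p-j)}{4p^2}$ in Proposition \ref{proprealize} (legitimate, since that proposition is proved independently in Section 3 and the Virasoro relations determine the additive constant in $L_0$ uniquely), whereas the paper simply takes the coefficients as stated in \cite[(1.14)]{KW}.
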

\begin{proof}
The assertion follows from \cite[(1.14)]{KW}.
\end{proof}

\subsection{A generalization of Proposition \ref{proprealize}}

Let $\theta: \CN(\sigma)\rightarrow \g$ be the linear map defined by
\[
\bar{N}_0(m)\mapsto L_{m-1},\quad \bar{N}_i(m+\frac{i}{p})\mapsto I_{m}^{(i)},\quad
\bar{K}_j(-1)\mapsto C_j,
\]
for $m\in \Z$, $i=1,2,\dots,p-1$, and $j=0,1,\dots,[\frac{p}{2}]$.
The following result gives a realization of $\g_J$ as a twisted vertex Lie algebras whose proof is straightforward.

\begin{lemma}\label{LAiso}
The map $\theta$ is a Lie algebra isomorphism. Furthermore,
 $\theta$ induces an isomorphism
\[
\theta_J=\theta|_{\CN_J(\sigma_J)}:\ \CN_J(\sigma_J)\rightarrow \g_J\]
when $J\ne J^*$, and an isomorphism
\[
\theta_J=\theta|_{\CN_J(\sigma_J)}:\ \CN_J(\sigma_J)\rightarrow \h_J\]
when $J=J^*$.
\end{lemma}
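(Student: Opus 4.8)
The plan is to first observe that $\theta$ is a linear bijection, since it sends the displayed basis of $\CN(\sigma)$ onto the basis $\{L_n,\elei ni, C_j\}$ of $\g$: as $m$ runs over $\Z$ the images $L_{m-1}$ exhaust all $L_n$, the images of $\bar N_i(m+\tfrac ip)$ exhaust all $\elei mi$, and $\bar K_j(-1)\mapsto C_j$. Thus it suffices to check that $\theta$ is a homomorphism, i.e. that it carries the three families of Lie brackets of $\CN(\sigma)$ to \eqref{eq2.1}--\eqref{eq2.3}. The method is to read off the mode form of each defining generating-function relation of $\CN(\sigma)$ and compare it termwise with the corresponding relation in $\g$.

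First I would treat the Virasoro family. Setting $\bar L_k=\bar N_0(k+1)$ rewrites $\bar N_0(z)=\sum_m\bar N_0(m)z^{-m-1}$ as the weight-two field $\sum_k \bar L_k z^{-k-2}$, so the first relation of $\CN(\sigma)$ becomes the standard bracket $[\bar L_m,\bar L_n]=(m-n)\bar L_{m+n}+\frac1{12}(m^3-m)\dt_{m+n,0}\bar K_0(-1)$. Since $\bar L_k=\bar N_0(k+1)\mapsto L_k$ and $\bar K_0(-1)\mapsto C_0$, this is exactly \eqref{eq2.1}. Next, the second relation says each $\bar N_i$ is a primary field of conformal weight one; extracting modes gives $[\bar L_m,\bar N_i(n+\tfrac ip)]=-(n+\tfrac ip)\bar N_i(m+n+\tfrac ip)$, which under $\theta$ becomes $[L_m,\elei ni]=-(n+\tfrac ip)\elei{m+n}i$, i.e. \eqref{eq2.3}.

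The only computation requiring care is the twisted Heisenberg family. Using $z^{-1}(w/z)^{i/p}\dt(w/z)=\sum_{k\in\Z}w^{k+i/p}z^{-k-i/p-1}$ and differentiating in $w$, the third relation yields, for $i+j=p$,
\begin{equation*}
 [\bar N_i(m+\tfrac ip),\bar N_j(n+\tfrac jp)]=\dt_{i+j,p}\dt_{m+n+1,0}\(m+\tfrac ip\)\bar K_i(-1);
\end{equation*}
here matching the $z$- and $w$-powers forces $k=m$ and (using $j/p=1-i/p$) the condition $m+n+1=0$, while the $w$-derivative supplies the coefficient $m+\tfrac ip$. Applying $\theta$ gives $[\elei mi,\elei nj]=(m+\tfrac ip)\dt_{i+j,p}\dt_{m+n+1,0}C_i$, which is \eqref{eq2.2}; together with the centrality of the $\bar K_j(-1)$ mapping to $[C_j,\g]=0$, this proves $\theta$ is a Lie algebra isomorphism. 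The delicate bookkeeping is exactly tracking the fractional shift $i/p$ through the twisted delta, so that the integrality constraint $\dt_{m+n+1,0}$ and the coefficient $m+\tfrac ip$ emerge correctly; everything else is routine.

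Finally, for the restricted maps I would identify images of bases, using that the restriction of an isomorphism to a subalgebra is an isomorphism onto its image and that $\CN_J(\sigma_J)$ is spanned by $\{\bar N_j(m+\tfrac jp),\bar K_j(-1)\mid m\in\Z,\ j\in J\}$. When $0\notin J$ (so $J=J^*$) these basis vectors map to $\{\elei mj, C_j\mid j\in J\}$, whose span is $\h_J$, giving $\theta_J:\CN_J(\sigma_J)\to\h_J$. When $0\in J$ (so $J\ne J^*$) the elements $\bar N_0(m)$ additionally contribute the full Virasoro part $\{L_{m-1}\mid m\in\Z\}$ together with $\bar K_0(-1)\mapsto C_0$, so the image is $\V\oplus\h_{J^*}=\g_J$, giving $\theta_J:\CN_J(\sigma_J)\to\g_J$. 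This completes the argument.
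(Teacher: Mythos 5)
Your proof is correct; the paper itself omits the argument entirely (declaring it ``straightforward''), and your verification is exactly the routine check that is being suppressed: mode expansion of the three generating-function relations of $\CN(\sigma)$, matching them with \eqref{eq2.1}--\eqref{eq2.3} under the shift $\bar N_0(m)\mapsto L_{m-1}$, plus the basis-tracking for the two restrictions $\theta_J$. In particular your careful treatment of the twisted delta $z^{-1}(w/z)^{i/p}\delta(w/z)$, which is where the constraint $\dt_{m+n+1,0}$ and the coefficient $m+\tfrac ip$ in \eqref{eq2.2} come from, is the one step that genuinely needed writing out, and you did it correctly.
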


For any nonzero $\bm{\ell}\in\C^{p-1}$ and $c\in \C$, in the following proposition we give a construction of (irreducible) restricted $\g$-modules of central charge $c$ and of level $\bm{\ell}$.
This is a slightly generalized version of Proposition \ref{proprealize}.

\begin{proposition}\label{main}
 Let $c\in \C$, $\bm{\ell}=\{\ell_1,\dots,\ell_{p-1}\}$ be a nonzero $(p-1)$-tuple in $\C^{p-1}$ such that $l_j=l_{p-j}$ for all $1\le j\le p-1$, and
 denote $J_{\bm\ell}=\{j\mid \ell_j\neq 0\}$ and $\bm\ell^*=(l_j)_{j\in J_{\bm\ell}}$.
 Let $W$ be a restricted $\g_{J^c_{\bm{\ell}}}$-module of central charge $c-|J_{\bm{\ell}}|$ and of level zero,
 and let $V$ be a restricted $\h_{J_{\bm{\ell}}}$-module of level $\bm{\ell}^*$.
 Then $W\otimes V$ becomes a restricted $\g$-module of central charge $c$ and of level $\bm{\ell}$ with action
 \begin{equation*}\begin{split}
  L_k.(w\otimes v)&=(L_k.w)\otimes v+ w\otimes (L^*_k.v),\\
  I_k^{(i)}.(w\otimes v)&= \begin{cases} w\otimes (I_k^{(i)}.v)\ &\text{if}\ i\in J_{\bm\ell}\\
  (I_k^{(i)}.w)\otimes v\ &\text{if}\ i\notin J_{\bm\ell}
 \end{cases},
 \end{split}\end{equation*}
 where $w\in W, v\in V$, $k\in \Z$, $i=1,2,\dots,p-1$,  and
 \begin{eqnarray*}
  L^*_k&=& \sum_{j\in J_{\bm\ell}} \frac{1}{2\ell_j}\(\sum_{m\in \Z} I_m^{(j)}I_{k-m-1}^{(p-j)}\)\ \text{if}\ k\ne 0,\\
  L^*_0&=& \sum_{j\in J_{\bm\ell}} \frac{1}{2\ell_j}\(\sum_{m<0} \elei mj\elei {-m-1}{p-j}+
  \sum_{m\geq 0}\elei {-m-1}{p-j}\elei mj\)+\sum_{j\in J_{\bm\ell}}\frac{j(p-j)}{4 p^2}.
 \end{eqnarray*}
 Furthermore, if both $V$ and $W$ are irreducible, then so is the $\g$-module $V\otimes W$.
\end{proposition}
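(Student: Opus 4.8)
The plan is to establish the module structure by translating the whole construction into the language of twisted modules over the vertex algebra $M_\CN(c,\bm\ell)$, and then to obtain irreducibility by recognizing the resulting module as an instance of the tensor product already treated in Subsection \ref{subsec:tengmod}. By Lemma \ref{LAiso}, the isomorphism $\theta\colon\CN(\sigma)\to\g$ lets us regard a $\g$-module and a restricted $\CN(\sigma)$-module as one and the same object; and by Proposition \ref{chartwistmod} (with $J=\{0,1,\dots,p-1\}$, so $\CN_J=\CN$), a restricted $\CN(\sigma)$-module on which $\bar K_0(-1)$ and the $\bar K_j(-1)$ act by $c$ and $\ell_j$ is exactly a (weak) $\sigma$-twisted $M_\CN(c,\bm\ell)$-module. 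Hence it suffices to make $W\otimes V$ into a $\sigma$-twisted $M_\CN(c,\bm\ell)$-module and then transport the vertex operators back through $\theta$.

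First I would reinterpret the two factors. Since $0\notin J_{\bm\ell}$, Lemma \ref{LAiso} identifies $\h_{J_{\bm\ell}}$ with $\CN_{J_{\bm\ell}}(\sigma_{J_{\bm\ell}})$, so by Proposition \ref{chartwistmod} the restricted $\h_{J_{\bm\ell}}$-module $V$ of level $\bm\ell^*$ is a $\sigma_{J_{\bm\ell}}$-twisted $M_{\CN_{J_{\bm\ell}}}(\bm\ell^*)$-module; likewise $0\in J^c_{\bm\ell}$ gives $\g_{J^c_{\bm\ell}}\cong\CN_{J^c_{\bm\ell}}(\sigma_{J^c_{\bm\ell}})$, and the restricted $\g_{J^c_{\bm\ell}}$-module $W$ of central charge $c-|J_{\bm\ell}|$ and level zero is a $\sigma_{J^c_{\bm\ell}}$-twisted $M_{\CN_{J^c_{\bm\ell}}}(c-|J_{\bm\ell}|,\un0)$-module. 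Now Proposition \ref{vaiden} supplies the tensor decomposition $M_\CN(c,\bm\ell)=M_{\CN_{J^c_{\bm\ell}}}(c-|J_{\bm\ell}|,\un0)\otimes M_{\CN_{J_{\bm\ell}}}(\bm\ell^*)$ and Lemma \ref{autiden} the matching factorization $\sigma=\sigma_{J^c_{\bm\ell}}\otimes\sigma_{J_{\bm\ell}}$. Invoking the standard construction of twisted modules over a tensor product vertex algebra (a $\tau_1\otimes\tau_2$-twisted $U_1\otimes U_2$-module arises by tensoring a $\tau_1$-twisted $U_1$-module with a $\tau_2$-twisted $U_2$-module), the space $W\otimes V$ becomes a $\sigma$-twisted $M_\CN(c,\bm\ell)$-module and hence, via $\theta$, a restricted $\g$-module.

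Next I would read off the explicit action using \eqref{conformaliden} and the conformal-vector identity \eqref{key}. For $i\in J_{\bm\ell}$ the vector $N_i(-1)\bm1$ sits in the second factor, so $Y_{W\otimes V}(N_i(-1)\bm1,z)=1\otimes\bar N_i(z)$ and $I_k^{(i)}$ acts as $w\otimes(I_k^{(i)}.v)$; for $i\notin J_{\bm\ell}$ it sits in the first factor and $I_k^{(i)}$ acts as $(I_k^{(i)}.w)\otimes v$; while the Virasoro vector $N_0(-1)\bm1$ decomposes additively as the Virasoro vector of the first factor plus $\omega_{J_{\bm\ell}}(\bm\ell^*)$ of the second, giving $L_k.(w\otimes v)=(L_k.w)\otimes v+w\otimes(L^*_k.v)$. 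Here $L^*_k$ is the coefficient of $z^{-k-2}$ in $Y_V(\omega_{J_{\bm\ell}}(\bm\ell^*),z)$, which by \eqref{key} is the normally ordered free-field expression corrected by $-\ell_j\binom{j/p}{2}z^{-2}$; using $-\binom{j/p}{2}=\frac{j(p-j)}{2p^2}$ one checks this is precisely the stated $L^*_k$, the anomaly surfacing only in $L^*_0$. In particular the $\g$-module $W\otimes V$ coincides with the tensor product $V^{\g}\otimes W^e$ of Subsection \ref{subsec:tengmod} built from the index set $I=J_{\bm\ell}$ through Proposition \ref{proprealize}.

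Finally, irreducibility follows from this identification. When $V$ is an irreducible $\h_{J_{\bm\ell}}$-module, the $\g$-action on $V^{\g}$ restricts on $\h_{J_{\bm\ell}}$ to the original irreducible action, so $V^{\g}$ is an irreducible $\g$-module; as $\bm\ell^*$ is generic and $W$ is irreducible over $\g_{J^c_{\bm\ell}}$, Proposition \ref{proptengmod}(1) then forces $V^{\g}\otimes W^e=W\otimes V$ to be irreducible. I expect the main obstacle to be bookkeeping rather than anything conceptual: keeping the two complementary index conventions straight and, above all, verifying that the abstract tensor-product twisted module delivers exactly the announced operators. The delicate point is the Virasoro part, where the coefficient extraction from \eqref{key}, including the anomaly term, must be matched against the closed form of $L^*_k$.
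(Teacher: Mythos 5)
Your proposal is correct and follows essentially the same route as the paper's own proof: reinterpret $V$ and $W$ as twisted modules via Lemma \ref{LAiso} and Proposition \ref{chartwistmod}, tensor them using Proposition \ref{vaiden} and Lemma \ref{autiden}, and read off the action from \eqref{conformaliden} and the anomaly formula \eqref{key}. Your final step is in fact slightly more careful than the paper, which dismisses irreducibility as obvious, whereas you correctly ground it in the identification with $V^{\g}\otimes W^e$ and Proposition \ref{proptengmod}(1).
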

\begin{proof}
 Let $\bm{\ell}$, $c$, $V$ and $W$ be as in the proposition and write
 \[
  L_0(z)=\sum_{m\in \Z} L_m z^{-m-2}\quad\text{and}\quad
  L_j(z)=\sum_{m\in \Z} I_m^{(i)}z^{-m-1}\quad (j=1,2,\dots,p-1).
 \]
 By Lemma \ref{LAiso} and Proposition \ref{chartwistmod},
 $W$ is a $\sigma_{J_{\bm{\ell}}^c}$-twisted $M_{\CN_{J_{\bm{\ell}}^c}}$-module such that
 \begin{equation} \label{eq1}
  Y_W(N_j(-1){\bm 1},z)=L_j(z)\quad\text{for } j\in J_{\bm\ell}^c,
 \end{equation}
 and
 \begin{equation} \label{eq2}
   Y_W(K_0(-1),z)=c-|J_{\bm \ell}|,\quad Y_W(K_i(-1),z)=0\quad\text{for }i\in (J_{\bm\ell}^c)^*.
 \end{equation}
 From \eqref{eq2}, it follows that $W$ is  a $\sigma_{J_{\bm{\ell}}^c}$-twisted $M_{\CN_{J_{\bm{\ell}}^c}}(c-|J_{\bm \ell}|,{\un 0})$-module.
 Similarly,  $V$ is a $\sigma_{J_{\bm{\ell}}}$-twisted $M_{\CN_{J_{\bm{\ell}}}}$-module such that
 \begin{equation} \label{eq3}
  Y_W(N_j(-1){\bm 1},z)=L_j(z) \quad\text{for }j\in J_{\bm\ell}
 \end{equation}
 and
 \begin{equation}\label{eq4}
  Y_W(K_i(-1),z)=\ell_i\quad\text{for }i\in J_{\bm\ell}.
 \end{equation}
 One concludes from \eqref{eq4} that $V$ is a $\sigma_{J_{\bm{\ell}}}$-twisted
 $M_{\CN_{J_{\bm{\ell}}}}({\bm \ell}^*)$-module.
 Furthermore, by \eqref{key} and \eqref{eq3}, we have that
 \begin{equation} \label{eq5}
  Y_V(\omega_{J_{\bm\ell}}({\bm\ell}^*),z)= L_{\bm\ell^*}(z)
  :=\sum_{j\in J_{\bm\ell}}\frac{1}{2\ell_j}\(:L_j(z) L_{p-j}(z):
  +\ell_j\frac{j(p-j)}{2p^2}z^{-2}\).
 \end{equation}

 Using Proposition \ref{vaiden} and Lemma \ref{autiden}, we find that
 $W\otimes V$ is a $\sigma$-twisted $M_{\CN}(c,{\bm \ell})$-module such that (see \eqref{conformaliden})
 \begin{equation}\begin{split}\label{eq6}
  Y_{W\otimes V}(N_0(-1){\bm 1},z)&=Y_W(N_0(-1){\bm 1},z)\otimes Y_V(\omega_{J_{\bm\ell}},z),\\
  Y_{W\otimes V}(N_i(-1){\bm 1},z)&=\begin{cases}
  Y_W(N_i(-1){\bm 1},z)\otimes 1\quad \text{if}\ i\notin J_{\bm \ell},\\
  1\otimes Y_V(N_i(-1){\bm 1},z)\quad \text{if}\ i\in J_{\bm \ell},
  \end{cases}
  \end{split}
 \end{equation}
 where $i=1,2,\dots,p-1$.
 Again by Proposition \ref{chartwistmod} with $J=\{0,1,2,\dots,p-1\}$, $W\otimes V$ is a restricted $\g$-module of central charge $c$ and of level ${\bm \ell}$  such that (see \eqref{eq1}, \eqref{eq3}, \eqref{eq5} and \eqref{eq6})
 \begin{equation*}\begin{split}
  L_0(z)=Y_{W\otimes V}(N_0(-1){\bm 1},z)&=Y_W(N_0(-1){\bm 1},z)\otimes Y_V(\omega_{J_{\bm\ell}},z)=L_0(z)\otimes L_{{\bm\ell}^*}(z),\\
  L_i(z)=Y_{W\otimes V}(N_i(-1){\bm 1},z)&=\begin{cases}
  Y_W(N_i(-1){\bm 1},z)\otimes 1=L_i(z)\otimes 1\quad \text{if}\ i\notin J_{\bm \ell},\\
  1\otimes Y_V(N_i(-1){\bm 1},z)=1\otimes L_i(z)\quad \text{if}\ i\in J_{\bm \ell},
  \end{cases}
  \end{split}
 \end{equation*}
 where $i=1,2,\dots,p-1$. This proves the first assertion and the second one is obvious.
\end{proof}

\bigskip

\textbf{Acknowledgments:}
C. Xu is supported by the National Natural Science Foundation of China (No. 12261077).
F. Chen is supported by the Natural Science Foundation of Xiamen, China (No. 3502Z202473005), the National Natural Science Foundation of China (No. 12471029) and the Fundamental Research Funds for the Central Universities (No. 20720230020).
S. Tan is supported by the National Natural Science Foundation of China (No. 12131018).

\end{document}